\newtheorem{theorem}{Theorem}[section]
\newtheorem{proposition}[theorem]{Proposition}
\newtheorem{claim}[theorem]{Claim}
\newtheorem{conjecture}[theorem]{Conjecture}
\newtheorem{lemma}[theorem]{Lemma}
\newtheorem{corollary}[theorem]{Corollary}
\theoremstyle{definition}
\newtheorem{remark}[theorem]{Remark}
\newtheorem{definition}[theorem]{Definition}
\newtheorem{example}[theorem]{Example}
\renewcommand{\emptyset}{\varnothing}
\newcommand{\0}{\emptyset}
\DeclareMathOperator{\conv}{conv}
\newcommand{\R}{\mathbb{R}}
\newcommand{\Z}{\mathbb{Z}}
\newcommand{\N}{\mathbb{N}}
\newcommand{\ee}{\mathbf{e}}
\newcommand{\bx}{\mathbf{x}}
\newcommand{\B}{\mathcal{B}}
\newcommand{\F}{\mathcal{F}}
\newcommand{\G}{\mathcal{G}}
\newcommand{\Po}{\mathcal{P}}
\newcommand{\Vol}{\mathrm{Vol}}
\DeclareMathOperator{\ehr}{\mathsf{ehr}}
\DeclareMathOperator{\des}{des}
\DeclareMathOperator{\Des}{Des}
\DeclareMathOperator{\Rel}{Rel} 
\newcommand{\sm}{\setminus}
\newcommand{\defterm}[1]{\textbf{#1}}
\DeclareMathOperator{\Pan}{\mathsf{Pan}} 
\DeclareMathOperator{\rank}{\operatorname{rank}}
\DeclareMathOperator{\CF}{\mathsf{CF}} 
\DeclareMathOperator{\wt}{wt}
\newcommand{\Sym}{\mathfrak{S}}
\newcommand{\elm}[2]{e_{#1}^{#2}} 
\DeclareRobustCommand{\stir}{\genfrac[]{0pt}{}} 
\newcommand{\Gchi}[1]{\chi\big[#1\big]} 
\newcommand{\HH}{\mathcal{H}}
\newcommand\commentout[1]{}
\begin{document}

\title{Ehrhart theory of paving and panhandle matroids}
\keywords{Ehrhart theory, matroid, relaxation, positivity, base polytope, paving matroid, Steiner system, projective plane}
\subjclass[2020]{Primary
05B35, 
52B40; 
Secondary 
05B25} 

\author{Derek Hanely}
\address{Department of Mathematics\\
         Penn State Behrend\\
\url{https://behrend.psu.edu/person/derek-hanely}}
\email{derek.hanely@psu.edu}

\author{Jeremy L. Martin}
\thanks{Martin is partially supported by Simons Collaboration Grant \#315347.}
\address{Department of Mathematics\\
         University of Kansas\\
\url{https://jlmartin.ku.edu/}}
\email{jlmartin@ku.edu}

\author{Daniel McGinnis}
\thanks{McGinnis is partially supported by the National Science Foundation under grant DMS-1839918 (RTG)}
\address{Department of Mathematics\\
        Iowa State University\\
\url{https://sites.google.com/iastate.edu/danielmcginnis/home}}
\email{dam1@iastate.edu}

\author{Dane Miyata}
\thanks{Miyata is partially supported by the National Science Foundation under grants DMS-1954050, DMS-2039316 (RTG), and DMS-2053243 (FRG)}
\address{Department of Mathematics\\
         University of Oregon\\
\url{https://math.uoregon.edu/profile/dmiyata}}
\email{dmiyata@uoregon.edu}

\author{George D. Nasr}
\thanks{Nasr was partially supported by the National Science Foundation under  grant DMS-2053243 (FRG)}
\address{Department of Mathematics\\
         Augustana University\\
\url{https://sites.google.com/view/george-d-nasr-math}}
\email{george.nasr@augie.edu}

\author{Andr\'es R. Vindas-Mel\'endez}
\thanks{Vindas-Mel\'endez is partially supported by the National Science Foundation under Award DMS-2102921.}
\address{Department of Mathematics\\
         University of California, Berkeley\\
\url{https://math.berkeley.edu/~vindas}}
\email{andres.vindas@berkeley.edu}

\author{Mei Yin}
\thanks{Yin is partially supported by the University of Denver's Faculty Research Fund 84688-145601.}
\address{Department of Mathematics\\
         University of Denver\\
\url{https://cs.du.edu/~meiyin/}}
\email{mei.yin@du.edu}

\date{\today}


\begin{abstract}
We show that the base polytope $P_M$ of any paving matroid $M$ can be systematically obtained from a hypersimplex by slicing off certain subpolytopes, namely base polytopes of lattice path matroids corresponding to panhandle-shaped Ferrers diagrams.  We calculate the Ehrhart polynomials of these matroids and consequently write down the Ehrhart polynomial of $P_M$, starting with Katzman's formula for the Ehrhart polynomial of a hypersimplex.  
The method builds on and generalizes Ferroni's work on sparse paving matroids.  
Combinatorially, our construction corresponds to constructing a uniform matroid from a paving matroid by iterating the operation of \textit{stressed-hyperplane relaxation} introduced by Ferroni, Nasr, and Vecchi, which generalizes the standard matroid-theoretic notion of circuit-hyperplane relaxation. 
We present evidence that panhandle matroids are Ehrhart positive and describe a conjectured combinatorial formula involving chain forests and Eulerian numbers from which Ehrhart positivity of panhandle matroids will follow.
As an application of the main result, we calculate the Ehrhart polynomials of matroids associated with Steiner systems and finite projective planes, and show that they depend only on their design-theoretic parameters: for example, while projective planes of the same order need not have isomorphic matroids, their base polytopes must be Ehrhart equivalent.
\end{abstract}

\maketitle


\section{Introduction}

The \defterm{Ehrhart function} of a polytope $P\subset \R^d$ is $\ehr_P(t):=|tP\cap \Z^d|$. where $tP=\{t\bx:\ \bx\in P\}$.  Ehrhart theory, developed by Ehrhart in the 1960s (see, e.g., \cite{Ehrhart}) can be regarded as a discrete version of integration: the growth of the Ehrhart function provides information about the volume and surface area of $P$.  Indeed, when $P$ is a lattice polytope (its vertices have integer coordinates), the Ehrhart function is a polynomial in $t$, with degree equal to the dimension of $P$, leading coefficient equal to its normalized volume, second-leading coefficient equal to half the surface area, and constant coefficient 1.
The other coefficients of $\ehr_P(t)$ are more mysterious, and in general can be negative.  While Ehrhart functions can often be calculated by combinatorial means, it is very often easier to describe the Ehrhart function of a polytope than to give an explicit polynomial expression for it.
Ehrhart theory is connected to the combinatorics of simplicial complexes, as well as number theory and discrete analysis; for a comprehensive overview, see~\cite{BeckRobins}.

In this paper, we are exclusively concerned with the Ehrhart theory of matroid base polytopes.  Recall that a \defterm{matroid} $M$ on finite ground set $E$ can be defined by its basis system $\B$, a nonempty collection of subsets of $E$, all of the same size, satisfying a certain \textit{exchange condition} (see Definition~\ref{defn:matroid} below).  Every finite collection of vectors give rise to a matroid whose bases are its maximal independent sets, and indeed the definition of a matroid is a combinatorial abstraction of the idea of linear independence.  Given a matroid $M$ with ground set $[n]=\{1,2,\dots,n\}$ and rank~$r$, the \defterm{base polytope} $\Po_M$ is the convex hull in $\R^n$ of the indicator vectors of its bases, each of which contains $r$ ones and $n-r$ zeroes.
Matroid base polytopes provide geometric insight into the structure of matroids (see, e.g., \cite{FeichtnerSturmfels}).  In particular, each edge of $\Po_M$ corresponds to a pair of vertices with symmetric difference of minimum size two, so it is parallel to the difference of two standard basis vectors.  Thus, matroid base polytopes fall into the important class of polytopes known as \defterm{generalized permutohedra} \cite{Beyond,PRW}, and in fact they
are exactly the generalized permutohedra whose vertices have all coordinates equal to 0 or 1 \cite{GGMS}.
 
Here we sketch what is known about volumes and Ehrhart polynomials $\ehr_M(t)=\ehr_{\Po_M}(t)$ of matroid base polytopes.
One of the most basic cases is the \defterm{hypersimplex} $\Delta_{r,n}$, the cross-section of the unit cube $[0,1]^n\subset\R^n$ by the affine (geometric) hyperplane of points with coordinates that sum to an integer~$r$.  The hypersimplex is the base polytope of the uniform matroid $U_r(n)$, whose bases are \textit{all} the $r$-subsets of $[n]$.
It is classical that volumes of hypersimplices are given by the Eulerian numbers, which enumerate permutations by numbers of descents \cite[sequence A008292]{OEIS}.  Their Ehrhart polynomials were calculated by Katzman \cite[Cor.~2.2]{Katzman}.
Ferroni \cite[Thm.~4.3]{FerroniHypersimplices} gave a combinatorial formula for the coefficients arising in Katzman's formula in terms of Eulerian numbers and weighted Lah numbers, and proved that they are positive.

For general matroid polytopes, volume formulas were given by Ardila, Benedetti, and Doker \cite{ABD} and Ashraf~\cite{Ashraf}, but much less is known about their Ehrhart polynomials.
De~Loera, Haws, and K\"oppe \cite{DeLoeraHawsKoppe} conjectured that matroid polytopes are Ehrhart positive in general, and work of Castillo and Liu \cite{CastilloLiu} suggested that even generalized permutohedra might be Ehrhart positive.
However, recently Ferroni \cite{FerroniNotPositive} explicitly constructed matroids that are not Ehrhart positive.
Our work draws on and expands Ferroni's, so we will describe his technique in some detail.







A matroid of rank~$r$ is called \defterm{paving} if every circuit has cardinality greater than or equal to~$r$, and it is \defterm{sparse paving} if it and its dual are both paving.
Paving matroids are well-known objects to matroid theorists \cite[Chapter 2,3]{Welsh}.
It is conjectured and widely believed that asymptotically all matroids are paving matroids, or even sparse paving matroids; see \cite{mayhew}, \cite{pendavingh-vanderpol}, \cite[Chapter~15.5]{Oxley}.
Ferroni used the well-known fact that the base polytope of a sparse paving matroid $M$ could be obtained from a hypersimplex by slicing with (geometric) hyperplanes (see \cite{HerrmaannJoswig} for an application of this fact). Each piece sliced off in this way is itself a base polytope of the \defterm{minimal matroid} $T_{r,n}$ (so called because they have the least number of bases for their rank and ground set size among all such connected matroids), whose Ehrhart polynomials Ferroni had previously calculated and proven that they were positive in~\cite{Ferroni2021A}.
The result is an explicit formula for the Ehrhart polynomial of a sparse paving matroid, which Ferroni was able to show did not always have positive coefficients.

It is useful to turn the construction around and regard a hypersimplex as constructed from a paving or sparse paving matroid base polytope by attaching panhandle or minimal matroid polytopes.
The combinatorial analogue of this geometric operation is \defterm{circuit-hyperplane relaxation} \cite[p.39]{Oxley}, which adds a single basis to a matroid.
Ferroni\cite[Thm.~1.8]{Ferroni2021A} showed that if $\widetilde{M}$ is obtained from $M$ by relaxing a circuit-hyperplane, then \[\ehr_{\widetilde{M}}(t)=\ehr_M(t)+\ehr_{T_{r,n}}(t-1).\]
It is evident from Ferroni's explicit formula for $\ehr_{T_{r,n}}(t)$ in~\cite{Ferroni2021A} that the polynomial $\ehr_{T_{r,n}}(t-1)$ has nonnegative coefficients. Therefore, the operation of circuit-hyperplane relaxation preserves Ehrhart positivity.

Our work extends Ferroni's methods from sparse paving matroids to paving matroids. 
We show that the base polytope of any paving matroid $M$ (not necessarily sparse) can be obtained from a hypersimplex by slicing with (geometric) hyperplanes.
The pieces sliced off are base polytopes of a class that we call \defterm{panhandle matroids} (see Figure~\ref{fig:panhandle} for the meaning of this terminology).  We calculate the Ehrhart function of a panhandle matroid in Proposition~\ref{prop:Ehrhart-panhandle}, then give explicit polynomial formulas in Theorem~\ref{thm:betterPanhandleformula} and Corollary~\ref{formula for positivity}.
This ``slicing" or ``sculpting" approach has been previously used in the study of polytopes (e.g., \cite{HerrmaannJoswig, JoswigSchroter, Kim}), but has only recently been used to study the lattice-point enumeration of lattice polytopes (e.g., \cite{FerroniMcGinnis, FerroniJochemkSchroter, FerroniSchroter}).
For matroid polytopes, slicing corresponds to a generalization of relaxation called \defterm{stressed-hyperplane relaxation}, first introduced in~\cite{relaxation}: every paving matroid can be transformed into a uniform matroid by iteratively relaxing stressed hyperplanes.
We calculate the effect of stressed-hyperplane relaxation on the Ehrhart polynomial (Theorem~\ref{thm:Ehrhart-relaxation-improved}) and use it to calculate the Ehrhart polynomial of a paving matroid (Theorem~\ref{thm:Ehrhart-paving-improved-formula}).
Panhandle matroids are instances of \defterm{Schubert matroids}, which are isomorphic to  \defterm{lattice path matroids} of (non-skew) Ferrers diagrams \cite{LPM}; work on the Ehrhart theory of Schubert matroids includes \cite{Bidkhori,FanLi}.  We conjecture that panhandle matroids are Ehrhart positive in Conjecture~\ref{PanhandlePositive} and outline an approach to prove this conjecture, which leads to a sufficient condition in Conjecture~\ref{big conjecture}.

The Ehrhart polynomial of a lattice polytope $P$ of dimension $n$ can always be written in the form $\ehr_P(t)=\sum_{i=0}^nh_i^*\binom{t+n-i}{n}$; the sequence $(h_0^*,\dots,h_n^*)$ is called the \defterm{$h^*$-vector}. 
Equivalently, $\sum_{t\geq 0}\ehr_P(t)z^t=\frac{h^*(P;z)}{(1-z)^{n+1}}$, where $h^*(P;z)=h_0^*+h_1^*z+\cdots+h_n^*z^n$.
Beyond Ehrhart positivity for matroid polytopes, De Loera et al.\ conjectured that the $h^*$-vector of any matroid polytope is unimodal \cite{DeLoeraHawsKoppe}.  
This conjecture seems very hard, although it has been verified for minimal matroids by Knauer, Mart\'inez-Sandoval, and Ram\'irez Alfons\'in \cite[Thm.~4.9]{KMR} and for sparse paving matroids of rank-2 by Ferroni, Jochemko and Schr\"oter~\cite{FerroniJochemkSchroter}. 
We hope to study the $h^*$-vectors of paving and panhandle matroids in a future article.


The paper is structured as follows.
Sections~\ref{sec:prelims}--\ref{sec:panhandle} include background material on Ehrhart theory and matroid base polytopes, stressed-hyperplane relaxation and its relevance to paving matroids, and panhandle matroids.
Along the way, we describe a more general version of relaxation in paving matroids (Proposition~\ref{prop:general-relaxing-for-paving}) that merits further study.

In Section~\ref{sec:ehrhart}, we state and prove the main results on the Ehrhart polynomials of panhandle matroids, stressed-hyperplane relaxations, and paving matroids, and show how Ferroni's formula for sparse paving matroids arises as a special case.

In Section~\ref{sec:ehrhart-panhandle}, we propose two closely related conjectures: panhandle matroids are Ehrhart positive (Conjecture~\ref{PanhandlePositive}) and stressed-hyperplane relaxation preserves Ehrhart positivity (Conjecture~\ref{RelaxationPositive}).
The conjectures reduce to showing that two near-identical polynomials $\phi_{r,s,n}(t)$ and $\tilde{\phi}_{r,s,n}(t)$, defined in~\eqref{phisrlt} and~\eqref{tildephi} respectively, have positive coefficients. 
Both statements appear to be true based on computational evidence, as we will explain later.
Moreover, positivity of $\phi_{r,s,n}(t)$ reduces to a combinatorial statement (Conjecture~\ref{big conjecture}) involving refinements of the weighted Lah numbers introduced by Ferroni \cite{FerroniHypersimplices}; again, there is significant computational evidence that this statement is true.
In Section~\ref{sec:weighted Lah}, we give a purely combinatorial proof of a formula for the weighted Lah numbers (Theorem~\ref{weighted lah}), which was originally proven using generating function methods by Ferroni, with the hope that the argument can be extended to attack Conjecture~\ref{big conjecture}. 


In Section~\ref{sec:volumes}, we apply Ashraf's volume formula to give closed formulas for the volumes of the base polytopes of panhandle matroids (Theorem~\ref{thm:volofpan}), stressed-hyperplane relaxations (Theorem~\ref{thm:volume-relaxation}), and paving matroids (Theorem~\ref{thm:volume-paving}).

Section~\ref{subsec:steiner_systems} applies our general results to paving matroids constructed in a standard way from Steiner systems and projective planes.
In particular, a consequence of our work is that the matroid polytopes arising from two Steiner systems with the same combinatorial parameters have the same Ehrhart polynomials, although they need not be isomorphic as polytopes.

\section{Background and preliminaries}\label{sec:prelims}

The symbol $\N$ will denote the nonnegative integers, and we write $[n]=\{1,\dots,n\}$. For integers $n$ and $k$, we adopt the convention that $\binom{n}{k}=0$ whenever $k<0$ or $k>n$. For a polynomial $f(t)$, we may also consider $\binom{f(t)}{k}$ to be the polynomial
\[
\frac{f(t)(f(t)-1)\cdots (f(t)-k+1)}{k!}.
\]
We note that, for a given value $t_0$, if $f(t_0)$ is a nonnegative integer, then the two interpretations of $\binom{f(t_0)}{k}$ are consistent.

\subsection{Polytopes and Ehrhart theory}

A \defterm{polytope} is the convex hull of a finite set of points in~$\R^n$, or alternatively the set of solutions of a finite set of linear equalities and inequalities, provided it is bounded.
The equivalence of the two definitions is a foundational result in polytope theory.
Standard references about polytopes are \cite{Grunbaum} and \cite{Ziegler}.

The \defterm{dimension} of a polytope $P\subset\R^n$ is $\dim P=\dim \operatorname{aff}(P)$, where $\operatorname{aff}(P)$ is the smallest affine subspace of $\R^n$ containing $P$.
The \defterm{normalized volume} or \defterm{relative volume} $\Vol(P)$ is the volume with respect to the lattice $\Z^n\cap \operatorname{aff}(P)$, 
The \defterm{Ehrhart function} of $P$ is defined as
\[\ehr_P(t): = |tP\cap\Z^n|\] where $tP=\{t\mathbf{x}:\ \mathbf{x}\in P\}$.
When all vertices of $P$ have integer coordinates (the only case we will consider), the Ehrhart function is a polynomial in $t$ of degree equal to the dimension of $P$, with leading coefficient $\Vol(P)$ \cite{BeckRobins, Ehrhart}.

\subsection{Matroids}\label{subsec:matroids}
We briefly review the definition of a matroid and relevant terminology.  
Standard sources include~\cite{Oxley} and~\cite{Welsh}.

\begin{definition} \label{defn:matroid}
Let $E$ be a finite set.
A \defterm{matroid basis system on ground set $E$} is a nonempty family $\B \subseteq 2^E$ of \defterm{bases} satisfying the \defterm{exchange axiom}: for all distinct $B,B'\in\B$ and all $e\in B\sm B'$, there exists $e'\in B'\sm B$ such that $\left(B\sm \{e\}\right)\cup\{e'\}\in \B$.
The pair $M=(E,\B)$ defines a \defterm{matroid}. 
Any subset of a basis is called an \defterm{independent set}.
The \defterm{rank function} of $M$ is defined by $\rank(A)=\max\{|A\cap B|:\ B\in\B\}$ for $A\subseteq E$.
The number $\rank(E)$ is called the \defterm{rank} of $M$, often abbreviated~$r$.
\end{definition}
The family of independent sets contains the same information as the basis system, as does the rank function, so a matroid can be defined by specifying any of these objects.
Some additional matroid terminology that will be useful:
\begin{itemize}
\item A \defterm{circuit} in a matroid is a minimal dependent subset of $E$.
A \defterm{loop} is a circuit of size~1.
\item A \defterm{flat} of $M$ is a subset $F\subseteq E$ such that $\rank(G)>\rank(F)$ for every $G\supsetneq F$.
A \defterm{hyperplane} is a flat of rank $r-1$. 
We write $\HH$ or $\HH_M$ for the set of hyperplanes of $M$.
\item The \defterm{direct sum} of matroids $M=(E,\B)$ and $M'=(E',\B')$ on disjoint ground sets is the matroid $M\oplus M'$ on $E\cup E'$  with basis system $\{B\cup B':\ B\in \B, B'\in \B'\}$.
\item A matroid is \defterm{connected} if it cannot be written as a direct sum of two non-empty matroids.
Every matroid admits a unique decomposition as a direct sum of connected matroids, called its \defterm{components.}
\item A \defterm{paving matroid} is a matroid $M$ for which every circuit has cardinality at least $\rank(M)$.
\item A \defterm{circuit-hyperplane} is a set that is both a circuit and a hyperplane.  If $C$ is a circuit-hyperplane, then the family $\B\cup\{C\}$ is in fact a matroid basis system~\cite[Prop.1.5.14]{Oxley}, called the \defterm{relaxation of $M$ at $C$}.
\end{itemize}
This use of the word ``hyperplane'' is standard in matroid theory but unfortunately conflicts with the geometric use of ``hyperplane,'' which we will also need.  It should be clear from context which meaning is intended. 

As a standard example, let $r$ be a nonnegative integer.
The \defterm{uniform matroid} $U_{r,n}$ on ground set $[n]$ has basis system $\binom{[n]}{r}=\{B\subseteq [n]:\ |B|=r\}$, independence system $\{B\subseteq[n]:\ |B|\leq r\}$, and rank function $\rank(A)=\min(r,|A|)$.

\subsection{Matroid base polytopes}\label{subsec:matroid_base_polys}
Every matroid has an associated polytope called its \defterm{base polytope}, which contains the same information as the basis system, rank function, etc., but enables the matroid to be studied geometrically.
The study of matroid polytopes dates back to Edmonds \cite{Edmonds} and also appears in the context of combinatorial optimization \cite{CookCunninghamPulleyblankSchrivjer, Fujishige}.
In particular, matroid base polytopes are a well-understood subclass of \textit{generalized permutohedra}, hence significant from the point of view of optimization and combinatorial Hopf theory; see
\cite{Fujishige,AguiarArdila,PRW,GGMS}.
A good starting reference for the geometry of matroid base polytopes is \cite[Sec.~2]{FeichtnerSturmfels}.

In what follows, let $ \conv A $ denote the convex hull of a point set $A\subseteq\R^n$.

\begin{definition} \label{def:matroid_polytopes}
The \textbf{base polytope} of a matroid $M$ is \[\Po_M:=\conv\{\ee_B: B\in\B\} \subset \R^{E},\] where $\displaystyle \ee_B=\sum_{i\in B}\ee_i$ and $\ee_i$ is the $i$th standard basis vector for $\R^E$.
\end{definition}
The polytope $\Po_M$ is always contained in the (geometric) hyperplane $\{\bx=(x_i)\in\R^E:\ \sum x_i=r\}$, hence must have dimension strictly less than $|E|$.
In fact, $\dim\Po_M=|E|-c$, where $c$ is the number of components.
It has the following description in terms of inequalities \cite[Prop.~2.3]{FeichtnerSturmfels}:
\begin{equation}\label{feichtner}
\Po_M=\bigg\{(x_1,\dots, x_n)\in \R_{\geq0}^n\ :\ \sum_{i=1}^n x_i = \rank(M),\ \sum_{i\in F} x_i \leq \rank(F) \textrm{ for all flats } F\bigg\}.
\end{equation}

\begin{example}
The \defterm{hypersimplex} $\Delta_{r,n}$ is the base polytope of the uniform matroid $U_{r,n}$.
It is the convex hull of the $\binom{n}{r}$ points in $\R^n$ whose coordinates consist of $r$ 1's and $n-r$ 0's, or equivalently the intersection of the unit cube $[0,1]^n$ with the (geometric) hyperplane $\{\bx\in\R^n:\ x_1+\cdots+x_n=r\}$.
It is a point when $r=0$ or $r=n$ and a simplex when $r=1$ or $r=n-1$.
The next simplest example is the polytope
\[\Po_{U_{2,4}}=\conv\{(1,1,0,0), (1,0,1,0), (1,0,0,1), (0,1,1,0),(0,1,0,1),(0,0,1,1)\},\]
which is an octahedron in which points with disjoint supports are antipodal.
The geometric properties of the hypersimplex are well understood.
Its normalized volume  is
\begin{equation} \label{eq:volume-hypersimplex}
\Vol(\Delta_{r,n})=A(n-1,r-1),
\end{equation}
where $A(n-1,r-1)$ is the \defterm{Eulerian number}, which counts permutations of $[n-1]$ with $r-1$ descents \cite[sequence A008292]{OEIS} (see also \cite[pp.32--35]{Stanley}).
Katzman~\cite[Cor.~2.2]{Katzman} (see also \cite[Problem~4.62]{Stanley}) computed the Ehrhart polynomial of the hypersimplex as
\begin{equation} \label{eq:Katzman}
\ehr_{U_{r,n}}(t)=\ehr_{\Delta_{r,n}}(t)=\sum_{j=0}^{r-1}(-1)^j\binom{n}{j} \binom{(r-j)t-j+n-1}{n-1}.
\end{equation}
\end{example}

More general formulas for the volume of a matroid polytope were given by Ardila, Benedetti and Doker \cite{ABD} and Ashraf \cite{Ashraf}; we will describe Ashraf's formula in detail in Section~\ref{sec:volumes}.

\section{Relaxation in matroids}\label{sec:relax}

Throughout this section, let $M$ be a matroid with ground set $E$, rank $r$, and basis system $\B$.
In \cite{relaxation}, the authors introduced a generalized notion of circuit-hyperplane relaxation, as we now describe.

\begin{definition}{\cite[Def.~3.1]{relaxation}}\label{def:stressed}
A hyperplane $H$ of $M$ is a \defterm{stressed hyperplane} if every subset of $H$ of size $r$ is a circuit.
\end{definition}

It follows from the definitions in Section~\ref{subsec:matroids} that every circuit-hyperplane is stressed.

Stressed hyperplanes exist in disconnected matroids only in extreme cases:

\begin{proposition}\label{prop:disconnected_stressed}
Let $ M=M_1\oplus M_2 $ be a rank $ r $ disconnected matroid on $ n $ elements with a stressed hyperplane $ H $ of cardinality $ s \geq 1$.
Then, $ M \cong U_{r-1, s}\oplus U_{1, n-s} $.
\end{proposition}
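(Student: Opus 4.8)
The plan is to pin down the shape of $H$ inside the direct sum, feed it into the defining property of a stressed hyperplane, and close with a connectedness argument on a restriction of $M$. I would treat the main case $\rank(M)\ge 2$ first, where $M$ is automatically loopless---every loop lies in every hyperplane, but a stressed hyperplane of rank $\ge 1$ contains no loop---so that both components have positive rank; the case $\rank(M)=1$ is trivial and I handle it at the end. Write $r_i=\rank(M_i)$, so $r_1+r_2=r$ and $r_1,r_2\ge 1$.

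Using that flats and rank are additive across a direct sum---every flat of $M_1\oplus M_2$ is $F_1\cup F_2$ with $F_i$ a flat of $M_i$, and ranks add---the hyperplane $H$ equals $F_1\cup F_2$ with $\rank_{M_1}(F_1)+\rank_{M_2}(F_2)=r-1$; since each summand is at most $r_i$, exactly one is one short of its maximum, and after relabelling the two components I may assume $\rank_{M_1}(F_1)=r_1-1$ and $\rank_{M_2}(F_2)=r_2$. The only flat of $M_2$ of full rank is $E_2$ itself, so $F_2=E_2$ and $H=H_1\sqcup E_2$ with $H_1$ a hyperplane of $M_1$. Now I invoke stressedness: since $\rank(H)=r-1$, every $(r-1)$-subset of $H$ is a basis of $M|_H$, i.e.\ $M|_H\cong U_{r-1,s}$; and since restriction commutes with direct sum, $M|_H=M_1|_{H_1}\oplus M_2$.

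The heart of the argument is that $U_{r-1,s}$ is \emph{connected}: a uniform matroid $U_{k,m}$ with $1\le k\le m-1$ is connected, and here $1\le r-1$ (as $r\ge 2$) and $r-1\le s-1$ (as $s\ge r$, using that a genuine stressed hyperplane is not an independent set, i.e.\ $|H|\ge\rank(M)$). Since a connected matroid has no decomposition into two nonempty direct summands and $M_2$ is nonempty, $M_1|_{H_1}$ must be the empty matroid; hence $H_1=\emptyset$ and $r_1-1=\rank_{M_1}(H_1)=0$, so $r_1=1$. Then $M|_H=M_2\cong U_{r-1,s}$ forces $M_2=U_{r-1,s}$ on $s$ elements, while $M_1$ is a loopless matroid of rank $1$ on $n-s$ elements, hence $M_1=U_{1,n-s}$; therefore
\[
M=M_1\oplus M_2\cong U_{1,n-s}\oplus U_{r-1,s}=U_{r-1,s}\oplus U_{1,n-s}.
\]
In the remaining case $\rank(M)=1$, the hyperplane $H$ is the unique rank-$0$ flat, i.e.\ the set of all loops of $M$, of size $s$, and deleting the loops leaves a loopless rank-$1$ matroid on $n-s$ elements, so again $M\cong U_{0,s}\oplus U_{1,n-s}=U_{r-1,s}\oplus U_{1,n-s}$.

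The step I expect to require real care is the connectedness argument, specifically ruling out the degenerate possibility $s=\rank(M)-1$: if $H$ were an independent set then $U_{r-1,s}$ would be free and the conclusion can fail---for instance when $M_1$ is a non-uniform matroid admitting an independent hyperplane---so it is essential here that $H$ is a bona fide stressed hyperplane, one with $|H|\ge\rank(M)$ (equivalently, one whose restriction $M|_H$ has no coloops). The additive flat bookkeeping of the second paragraph and the rank-$1$ case are routine.
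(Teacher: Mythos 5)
Your route is genuinely different from the paper's. The paper works directly on the component contained in $H$: writing $H=E_1\cup H_2$ with $E_1$ the ground set of one summand, stressedness makes every $(r-1)$-subset of $E_1$ independent, so that summand is uniform of rank $r-1$, forcing the other summand to be rank-$1$ loopless and $H_2=\emptyset$. You instead identify $M|_H\cong U_{r-1,s}$, decompose $M|_H=M_1|_{H_1}\oplus M_2$, and invoke connectedness of $U_{r-1,s}$ to annihilate the summand $M_1|_{H_1}$. When $s\geq r$ (and $r\geq 2$) this is a clean and correct argument; your looplessness observation, the flat/rank bookkeeping in the direct sum, and the $r=1$ case are all fine.

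The gap sits exactly where you flagged it, and your resolution is not justified: the claim that a stressed hyperplane must satisfy $|H|\geq\rank(M)$ is not part of Definition~\ref{def:stressed} and is false in general --- every hyperplane of a uniform matroid $U_{r,n}$ with $r<n$ is an independent $(r-1)$-set and is stressed, and in the disconnected setting $M=U_{2,2}\oplus U_{1,2}$ the pair of coloops is an independent stressed hyperplane of size $r-1$ to which the proposition still applies (and which the paper's argument covers, since there $|E_1|=r-1$). So your proof leaves the case $s=r-1$ untreated, and your connectedness lemma is unavailable there because $U_{r-1,r-1}$ is free, hence disconnected. You are right that this regime is genuinely dangerous: for $M=U_{1,1}\oplus U_{2,3}$ the coloop together with one point of the $U_{2,3}$ is a stressed hyperplane of size $2=r-1$, yet $M\not\cong U_{2,2}\oplus U_{1,2}$, so some additional hypothesis (such as $s\geq r$, equivalently $H$ dependent) is in fact needed for the statement; the paper's own proof silently assumes the component inside $H$ has at least $r-1$ elements at the step ``every size $r-1$ subset of $E_1$ is independent, so $M_1$ is uniform of rank $r-1$.'' But within the terms of the stated definition you cannot simply declare $|H|\geq r$ to be part of ``stressed'': you must either add it as an explicit hypothesis or treat $s=r-1$ by a separate argument, and as written your proof does neither.
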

\begin{proof}
First, if $r=1$, then the unique stressed hyperplane is the set of loops, from which the conclusion follows.  
Henceforth, assume $r\geq2$.  
In this case, note that $ M $ must be loopless (since a hyperplane contains every loop, but a stressed hyperplane can contain no loops), so $M_1$ and $M_2$ are loopless as well.  
In particular, their ranks are both at least 1 and at most $ r-1 $.
Without loss of generality, we may assume that $H = E_1 \cup H_2$, where $ E_1 $ is the ground set of $ M_1 $ and $ H_2 $ is a hyperplane of $ M_2 $.
Since $ H $ is stressed, every subset of $E_1$ of size $r-1$ is independent, so $ M_1 $ must be uniform of rank $ r-1 $.
Therefore $ M_2 $ is a rank-1 loopless matroid, hence uniform.  
In particular the only hyperplane of $ M_2 $ is the empty set, so $ H = E_1 $, which implies $ M_1 \cong U_{r-1, s} $ and $ M_2 \cong U_{1, n-s} $.
\end{proof}

Moreover, stressed hyperplanes are intimately connected with paving matroids:

\begin{proposition}{\cite[Prop.~3.16]{relaxation}}\label{prop:stressed_hyps_paving}
A matroid is a paving matroid if and only if every hyperplane is stressed.
\end{proposition}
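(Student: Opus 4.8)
The plan is to prove the two implications separately, working straight from the definitions of circuit, hyperplane, and rank; write $r=\rank(E)$ throughout.

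For the forward direction, suppose $M$ is paving and let $H\in\HH$, so that $\rank(H)=r-1$. Given any subset $A\subseteq H$ with $|A|=r-1$, I would argue that $A$ must be independent: otherwise $A$ contains a circuit $C$ with $|C|\le|A|=r-1<r$, contradicting the defining property of a paving matroid. Hence $H$ is stressed, and this direction is essentially immediate.

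For the converse, suppose every hyperplane of $M$ is stressed, and assume toward a contradiction that $M$ has a circuit $C$ with $|C|=k\le r-1$. Since every proper subset of $C$ is independent while $C$ itself is not, $\rank(C)=k-1\le r-2$, so the closure $\operatorname{cl}(C)$ is a proper flat of rank at most $r-2$. I would then invoke the standard fact that every proper flat of a matroid is contained in a hyperplane --- concretely, starting from $\operatorname{cl}(C)$ one repeatedly adjoins an element outside the current flat and passes to its closure, raising the rank by exactly one at each step, until a rank-$(r-1)$ flat is reached. Fix such a hyperplane $H\supseteq C$. Because $|H|\ge\rank(H)=r-1$ and $|C|=k\le r-1$, I can enlarge $C$ to a set $A$ with $C\subseteq A\subseteq H$ and $|A|=r-1$ (taking $A=C$ when $k=r-1$). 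Then $A$ contains the circuit $C$ and so is dependent, yet $A$ is an $(r-1)$-subset of the stressed hyperplane $H$ and hence independent --- a contradiction. Therefore every circuit of $M$ has size at least $r$, i.e.\ $M$ is paving.

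The only step that is more than bookkeeping with cardinalities is the assertion that the low-rank flat $\operatorname{cl}(C)$ (equivalently, the circuit $C$ itself) lies inside some hyperplane; this is a routine property of the lattice of flats of a matroid and is the one place where I would be careful, though in the writeup it should reduce to a sentence or a citation.
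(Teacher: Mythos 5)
Your argument is correct. Note, however, that the paper does not prove this proposition at all: it is quoted with a citation to [relaxation, Prop.~3.16], so there is no internal proof to compare against. Your write-up supplies a complete elementary proof from the definitions. The forward direction (a dependent $(r-1)$-subset of a hyperplane would contain a circuit of size $<r$) is immediate, as you say. The converse is also sound: a circuit $C$ with $|C|=k\le r-1$ has $\rank(C)=k-1\le r-2$, its closure is a proper flat, and the standard saturation argument (adjoin an element outside the current flat and take the closure, which raises the rank by exactly one since, by the definition of flat, adding any outside element strictly increases rank) produces a hyperplane $H\supseteq C$; padding $C$ to an $(r-1)$-subset $A\subseteq H$ (possible since $|H|\ge\rank(H)=r-1$) gives a dependent $(r-1)$-subset of a stressed hyperplane, a contradiction. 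The one step you flag as needing care --- every proper flat lies in a hyperplane --- is indeed the only nontrivial ingredient, and your sketch of it is the standard proof; in a final write-up it could equally be dispatched by a one-line citation to the lattice-of-flats facts in Oxley. Your statement matches the characterization the paper relies on (compare also the remark after the proposition invoking Oxley's Prop.~2.1.21), so the proof can stand as a self-contained justification of the cited result.
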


Compare this result with \cite[Prop.~2.1.21]{Oxley}: a family $\HH$ of subsets of $E$, all of size at least $r-1$, is the set of hyperplanes of a paving matroid of rank~$r$ if and only if each $(r-1)$-subset of $E$ is contained in exactly one element of $\HH$.

\begin{definition}\label{def:gen_relaxed}
Let $S\subseteq E$ be a set containing no basis.  
We say that $S$ \defterm{can be relaxed} if $\Rel_S(\B):=\B\cup\binom{S}{r}$ is a matroid basis system.
In this case we call the resulting matroid the \textbf{relaxation of $M$ at $S$}, denoted by $\Rel_S(M)$.
\end{definition}

\begin{proposition}\label{prop:relaxation}\cite[Thm.~1.2]{relaxation}
If $H$ is a stressed hyperplane of $M$, then $H$ can be relaxed. 
\end{proposition}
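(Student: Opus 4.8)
The plan is to verify directly that $\Rel_H(\B) = \B \cup \binom{H}{r}$ satisfies the basis exchange axiom, using the hypothesis that $H$ is a stressed hyperplane. First I would record two structural facts that follow from the definitions. Since $H$ is a hyperplane, $\rank(H) = r-1$, so no $r$-subset of $H$ is a basis of $M$, and $\binom{H}{r}$ is genuinely a set of ``new'' candidate bases disjoint from $\B$; moreover $|H| \geq r$ (otherwise $\binom{H}{r}$ is empty and there is nothing to prove). Since $H$ is stressed, every $(r-1)$-subset of $H$ is independent in $M$, hence (as $\rank(H) = r-1$) a basis of the restriction $M|_H$, which is therefore the uniform matroid $U_{r-1}(H)$. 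The key consequence I would extract: for any $(r-1)$-subset $I \subseteq H$ and any element $e \in E \sm H$, the set $I \cup \{e\}$ is independent in $M$; indeed $I$ is independent and $e \notin \overline{I} \subseteq \overline{H} = H$ since $I$ spans $H$ — wait, one must be slightly careful, as $\overline{I}$ need not equal $H$, but $\overline{I} \subseteq H$ because $\rank(I) = r-1 = \rank(H)$ and $I \subseteq H$ forces $\overline{I} \subseteq \overline{H} = H$. So $e \notin \overline{I}$, giving $\rank(I \cup \{e\}) = r$, i.e. $I \cup \{e\}$ is a basis of $M$.

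Next I would check exchange for the three types of ordered pairs $(B, B')$ with $B, B' \in \Rel_H(\B)$ distinct and $f \in B \sm B'$. The case $B, B' \in \B$ is immediate from the exchange axiom of $M$, provided I confirm that the exchanged set $(B \sm \{f\}) \cup \{e'\}$, which lies in $\B$, also lies in $\Rel_H(\B)$ — trivially true. The case $B \in \binom{H}{r}$, $B' \in \B$: here $B \sm \{f\}$ is an $(r-1)$-subset of $H$; since $|B'| = r > r-1 = \rank(B \sm \{f\})$, and $B'$ is independent, there is some $e' \in B' \sm (B \sm \{f\})$ with $(B \sm \{f\}) \cup \{e'\}$ independent of size $r$, hence a basis of $M$, hence in $\Rel_H(\B)$; I must also check $e' \in B' \sm B$, which holds because $e' \notin B \sm \{f\}$ and $e' \ne f$ (as $f \notin B'$ but $e' \in B'$). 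The case $B' \in \binom{H}{r}$: subdivided by whether $B \in \B$ or $B \in \binom{H}{r}$. If $B, B' \in \binom{H}{r}$, then $B \sm \{f\} \subseteq H$ has rank $r-1$, and since $B' \subseteq H$ with $|B'| = r$, some $e' \in B' \sm B$ extends $B \sm \{f\}$ to an independent $r$-set inside $H$, which lies in $\binom{H}{r} \subseteq \Rel_H(\B)$. If $B \in \B$ and $B' \in \binom{H}{r}$: here is where I invoke the structural fact above. If $B \subseteq H$ is impossible since $B$ is a basis of $M$ of rank $r$ and $\rank(H) = r-1$, so $B \not\subseteq H$; pick $e \in B \sm H$. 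If $e = f$, then $B \sm \{f\} = B \sm \{e\}$ has rank $r-1$; pick any $e' \in B' \sm (B \sm \{f\})$ — nonempty since $|B'| = r > r-1$ — then $(B\sm\{f\}) \cup \{e'\}$; I need it to be a basis of $\Rel_H(\B)$. If $e \ne f$, then $e \in B \sm \{f\}$; consider... this sub-case is the delicate one.

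I expect the last sub-case — $B \in \B$, $B' \in \binom{H}{r}$, $f \in B \sm B'$ — to be the main obstacle, because the ``new'' basis $B'$ lies entirely in $H$ while $B$ is forced to stick out of $H$, so a naive exchange from the old matroid need not land inside $\Rel_H(\B)$. The idea to push through is: $B \sm \{f\}$ is independent of size $r-1$ in $M$; take its closure $\overline{B \sm \{f\}}$, a hyperplane or a rank-$(r-1)$ flat. If $B \sm \{f\} \subseteq H$ then I can complete it inside $H$ using an element of $B'$ as in the $B, B' \in \binom{H}{r}$ case. If $B \sm \{f\} \not\subseteq H$, pick $e \in (B \sm \{f\}) \sm H$; I claim there exists $e' \in B' \sm H$... but $B' \subseteq H$, so that fails. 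Instead: since $B' \subseteq H$ and $\rank(B') = r-1 = \rank(H)$, $B'$ is a basis of $M|_H$, and $B' \cup \{e\}$ is a basis of $M$ by the structural fact; now apply exchange in $M$ to the ordered pair $(B, B' \cup \{e\})$ and element $f \in B \sm (B' \cup \{e\})$ (valid since $f \notin B'$ and $f \ne e$), obtaining $e'' \in (B' \cup \{e\}) \sm B$ with $(B \sm \{f\}) \cup \{e''\} \in \B \subseteq \Rel_H(\B)$. One then checks $e'' \in (B' \cup \{e\}) \sm B$; if $e'' = e$ the resulting set is $(B \sm \{f\}) \cup \{e\} = B$, contradiction, so $e'' \in B' \sm B$ — exactly the required exchange element. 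This maneuver of padding $B'$ by an element outside $H$ to re-enter the old matroid's basis system is the crux; once it is in place, assembling the cases into a clean verification of the exchange axiom is routine, and it is worth remarking that the argument in fact reproves Proposition~\ref{prop:relaxation} in a self-contained way, cross-checking the statement of \cite[Thm.~1.2]{relaxation}.
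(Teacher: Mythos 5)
The paper itself offers no proof of this proposition (it is quoted from \cite[Thm.~1.2]{relaxation}), so your argument stands or falls on its own. Your plan — verify basis exchange directly, using the structural fact that every $(r-1)$-subset of $H$ plus one element outside $H$ is a basis of $M$ (this is exactly the paper's Lemma~\ref{lem:relax-stressed}) — is sound, and the cases $B,B'\in\B$; $B\in\binom{H}{r}, B'\in\B$; and $B,B'\in\binom{H}{r}$ are handled correctly (in the last of these, note the exchanged set is \emph{dependent} in $M$; it qualifies only because it lies in $\binom{H}{r}$). But the decisive case $B\in\B$, $B'\in\binom{H}{r}$ contains a genuine error: you apply the exchange axiom of $M$ to the pair $(B,\,B'\cup\{e\})$, asserting that $B'\cup\{e\}$ is a basis of $M$ ``by the structural fact.'' It is not: $|B'|=r$, so $B'$ is not a basis of $M|_H\cong U_{r-1}(H)$ (whose bases are the $(r-1)$-sets), and $B'\cup\{e\}$ has $r+1$ elements; the structural fact applies only to $(r-1)$-subsets of $H$. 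Moreover your preliminary split on $e=f$ versus $e\neq f$ is left dangling: when $e=f$ an arbitrary $e'\in B'\setminus(B\setminus\{f\})$ need not produce a member of $\Rel_H(\B)$ if $B\setminus\{f\}$ still contains elements outside $H$.

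The repair is short. Split instead on whether $B\setminus\{f\}\subseteq H$. If it is, any $e'\in B'\setminus B$ gives $(B\setminus\{f\})\cup\{e'\}\in\binom{H}{r}$. If not, pick $e\in(B\setminus\{f\})\setminus H$ and any $x\in B'$, and set $B''=(B'\setminus\{x\})\cup\{e\}$: now $B''$ \emph{is} a basis of $M$ by the structural fact, $f\in B\setminus B''$ (as $f\notin B'$ and $f\neq e$), and $B''\neq B$ (otherwise $f\in B'$); exchange in $M$ gives $e''\in B''\setminus B$ with $(B\setminus\{f\})\cup\{e''\}\in\B$, and since $e\in B$ we get $e''\in B'\setminus B$, as required. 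Alternatively, argue via closures: $F=\overline{B\setminus\{f\}}$ is a rank-$(r-1)$ flat distinct from $H$ (it contains $e\notin H$), so $F\cap H$ has rank at most $r-2$; since every $(r-1)$-subset of $H$ is independent this forces $|F\cap H|\leq r-2$, hence some $e'\in B'\setminus F$ exists, and $B\cap B'\subseteq B\setminus\{f\}\subseteq F$ shows $e'\in B'\setminus B$ while $e'\notin F$ makes $(B\setminus\{f\})\cup\{e'\}$ a basis of $M$. With either patch your case analysis closes and the proposition is proved.
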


When $S$ is a circuit-hyperplane, the relaxation of $M$ at $S$ coincides with the usual notion of a relaxation as in \cite[Section 1.5]{Oxley}, and when $S$ is a stressed hyperplane, we recover the stressed-hyperplane relaxation (Proposition~\ref{prop:relaxation}). 

In our study of matroid base polytopes, the sets we relax will always be stressed hyperplanes. 
On the other hand, it is possible to relax other sets in matroids.  
In the remainder of this section, we describe a generalization of stressed-hyperplane relaxation for further study.

\begin{example}\label{ex:gen_rel}
Consider the matroid with basis system $\B=\{\{1,2,3\},\{1,2,4\}\}$.
The set $S=\{1,3,4\}$ is a hyperplane, but it is not stressed since its subset $\{3,4\}$ is dependent.
Nevertheless, $S$ can be relaxed to produce the matroid with basis system $\{\{1,2,3\},\{1,2,4\},\{1,3,4\}\}$.
\end{example}

As the wording in Definition \ref{def:gen_relaxed} suggests, there exist sets that cannot be relaxed.
The following provides some necessary conditions.

\begin{proposition} \label{pillow}
Let $S \subseteq E$ be a subset of size at least $r$ containing no basis of $M$.
Then, $S$ can be relaxed only if $\rank(S)=r-1$ and $S$ is not properly contained in any stressed hyperplane of $M$.
\end{proposition}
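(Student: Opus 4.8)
The plan is to prove the contrapositive of each necessary condition, using the exchange axiom for the proposed basis system $\Rel_S(\B) = \B \cup \binom{S}{r}$. Throughout, fix $S \subseteq E$ with $|S| \geq r$ and containing no basis of $M$; assume $S$ can be relaxed.

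\textbf{Step 1: $\rank(S) = r-1$.} Since $S$ contains no basis, $\rank(S) \leq r-1$. Suppose for contradiction that $\rank(S) \leq r-2$. Pick any $B' \in \binom{S}{r}$ (possible since $|S|\geq r$); this is a basis of $\Rel_S(M)$. Choose an actual basis $B \in \B$ of $M$. If $B = B'$ then $B'$ would be a basis of $M$, contradicting our hypothesis on $S$, so $B \neq B'$. I want to apply the exchange axiom to $B'$ and $B$: for any $e \in B' \setminus B$ there should be $e' \in B \setminus B'$ with $(B' \setminus \{e\}) \cup \{e'\} \in \Rel_S(\B)$. The key observation is that $(B'\setminus\{e\})\cup\{e'\}$ is a size-$r$ set; it lies in $\binom{S}{r}$ only if $e' \in S$, and it lies in $\B$ only if it is independent in $M$. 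But $(B' \setminus \{e\}) \cup \{e'\} \subseteq S \cup \{e'\}$, which has rank at most $\rank(S) + 1 \leq r-1 < r$, so it cannot be independent in $M$; hence the exchanged set must again lie in $\binom{S}{r}$, forcing $e' \in S$. The idea now is to iterate: starting from $B'$, repeatedly exchange elements of $B' \setminus B$ for elements of $B \setminus B'$, staying inside $\binom{S}{r}$ at every stage (the rank bound $\rank(S\cup\{e'\}) \le r-1$ continues to apply since all new elements lie in $S$). After $|B'\setminus B|$ steps we reach $B$ itself, which would then lie in $\binom{S}{r}$, i.e.\ $B \subseteq S$, contradicting that $S$ contains no basis. (One must be slightly careful that the exchange process actually terminates at $B$; this is the standard fact that the exchange axiom lets one transform any basis into any other by single-element swaps, combined with the invariant that we never leave $\binom{S}{r}$.) Hence $\rank(S) = r-1$.

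\textbf{Step 2: $S$ is not properly contained in any stressed hyperplane.} Suppose $S \subsetneq H$ with $H$ a stressed hyperplane of $M$. Pick $f \in H \setminus S$. Since $\rank(S)=r-1$ and $S \subseteq H$ with $\rank(H)=r-1$, and $|S|\geq r$, choose any $B' \in \binom{S}{r}$, which is a basis of $\Rel_S(M)$. Also choose an independent set $I$ of $M$ of size $r-1$ contained in $S$ (exists since $\rank(S)=r-1$ and $S$ is finite); then $I \cup \{f\} \subseteq H$ has size $r$, and because $H$ is stressed, every $(r-1)$-subset of $H$ — in particular $I$ — is independent, but $I\cup\{f\}$ cannot be independent since $\rank(H)=r-1$; so $I\cup\{f\}$ is dependent, and in fact is \emph{not} a basis of $\Rel_S(M)$ either, because it is not contained in $S$ (it contains $f\notin S$) and it is not independent in $M$. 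Now apply the exchange axiom to the pair $B' \in \binom{S}{r}$ and some basis of $M$ extending $I$: the point is to track that whenever we exchange into an independent set of $M$, that set lives in the stressed hyperplane's closure and so has rank $\leq r-1$ — impossible for a size-$r$ set. I would set this up by taking $B_0 = I \cup \{f\}$ as a size-$r$ subset of $H$ that is a candidate "basis" but is neither in $\B$ nor in $\binom{S}{r}$, and derive a contradiction with the basis-exchange structure of $\Rel_S(\B)$; concretely, if $\Rel_S(\B)$ is a matroid then its rank function is well-defined, and one computes $\rank_{\Rel_S(M)}(H) = r-1$ on one hand (since $H$ still contains no basis: a basis contained in $H$ would have to lie in $\binom{S}{r}$, but $H\setminus S \neq \emptyset$ and any $r$-subset of $S$ already in $\binom{S}{r}$ is a basis — wait, those \emph{are} bases, so $\rank_{\Rel_S(M)}(S) = r$), giving $\rank_{\Rel_S(M)}(H) \geq \rank_{\Rel_S(M)}(S) = r > r-1 \geq \rank_{\Rel_S(M)}(H)$, where the last inequality holds because adding the sets $\binom{S}{r}$ does not create any independent set of size $r$ inside $H$ other than subsets of $S$ — and those have size exactly $r$, so actually $\rank_{\Rel_S(M)}(H) = r$, which contradicts $H$ being a proper superset of something of rank $r$ only if... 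Let me instead phrase it cleanly: in $\Rel_S(M)$ the set $S$ has rank $r$, so $H \supsetneq S$ has rank $r$ as well, but then $H$ contains a basis of $\Rel_S(M)$, namely some element of $\binom{S}{r}$; that is fine, but it shows $H$ is no longer a flat of rank $r-1$ — and more importantly, take any $g \in H \setminus S$: in $M$, $S \cup \{g\}$ has rank $r-1$ (as $S\cup\{g\}\subseteq H$), so $S\cup\{g\}$ contains no basis of $M$ and, since it is not a subset of $S$, it contains no member of $\binom{S}{r}$ as a \emph{basis}... hmm, it may contain $r$-subsets of $S$. The cleanest contradiction: apply the exchange axiom with $B' \in \binom{S}{r}$ (a basis of $\Rel_S(M)$) and the fact that $H$ is stressed to show $B' \cup \{g\}$ must contain another basis, forcing an $r$-subset of $H \not\subseteq S$ to be a basis of $\Rel_S(M)$, hence independent in $M$ — contradicting $\rank_M(H) = r-1$.

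\textbf{Main obstacle.} The delicate part is Step 2 (and the termination argument in Step 1): making the exchange/iteration arguments airtight so that one genuinely stays inside the controlled family of sets at every swap, rather than hand-waving "iterate the exchange axiom." I expect the slickest route is to avoid iterating the exchange axiom directly and instead argue via the rank function of the relaxed matroid: once $\Rel_S(M)$ is known to be a matroid, $\rank_{\Rel_S(M)}$ is monotone and submodular, $\rank_{\Rel_S(M)}(S) = r$ (some $r$-subset of $S$ is now a basis), yet for any stressed hyperplane $H \supsetneq S$ one shows $\rank_{\Rel_S(M)}(H) = \rank_M(H) = r-1$ because $\binom{S}{r}$ adds no independent set that is not already a subset of $S$ — combined with $S \subseteq H$ this gives $r = \rank_{\Rel_S(M)}(S) \le \rank_{\Rel_S(M)}(H) = r-1$, the desired contradiction. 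Verifying that claim — that relaxing $S$ creates no new independent sets inside $H$ beyond the $r$-subsets of $S$ already listed — is where the hypothesis "$H$ stressed" does its work, since every $(r-1)$-subset of $H$ was already independent in $M$, so no genuinely new independence relations can appear in $H$.
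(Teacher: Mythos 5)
Your Step 1 is essentially sound: the forcing argument shows that any exchange out of a set in $\binom{S}{r}$ must bring in an element of $S$ (since the resulting $r$-set lies in $S\cup\{e'\}$, of rank at most $\rank(S)+1\le r-1$), each swap decreases $|B''\triangle B|$ by $2$, and the final step places the genuine basis $B$ inside $S$, a contradiction. (The paper avoids the iteration: taking $T$ a \emph{maximal} independent subset of $S$ and bases $B\supseteq T$ of $M$ and $\widetilde B\supseteq T$ in $\binom{S}{r}$ forces $B\cap S=T$, so a single application of the exchange axiom already fails.) The genuine gap is in Step 2. The route you ultimately endorse as the ``slickest'' --- showing $\rank_{\Rel_S(M)}(H)=\rank_M(H)=r-1$ and contradicting monotonicity --- cannot work: since $S\subseteq H$ and $\rank_{\Rel_S(M)}(S)=r$, monotonicity forces $\rank_{\Rel_S(M)}(H)\ge r$, so the equality you want is false. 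Indeed, the very claim you propose to verify (``relaxing $S$ creates no new independent sets inside $H$ beyond the $r$-subsets of $S$'') would give $\rank_{\Rel_S(M)}(H)=r$, not $r-1$; your own paragraph notices this midway and trails off without resolving it, and this framing never isolates what stressedness of $H$ is actually for.

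The salvageable idea is your earlier sketch, but it is not carried out: from $B'\in\binom{S}{r}$ (a basis of $\Rel_S(M)$ contained in $H$) and $g\in H\sm S$, you must produce a basis of $\Rel_S(M)$ contained in $H$ and containing $g$; such a basis cannot lie in $\binom{S}{r}$, hence lies in $\B$, contradicting that the hyperplane $H$ contains no basis of $M$. Making this precise needs more than the quoted exchange axiom --- either the fundamental-circuit fact that for a basis $B'$ and a non-loop $g\notin B'$ some $(B'\sm\{e\})\cup\{g\}$ is again a basis, or the observation that $H$ is spanning in $\Rel_S(M)$ so $\{g\}$ extends to a basis of $\Rel_S(M)$ inside $H$ --- together with a check that $g$ is not a loop, which is where stressedness (every $(r-1)$-subset of $H$ is independent) enters. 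The paper instead exhibits a direct failure of basis exchange for $\B\cup\binom{S}{r}$: pick $h\in H\sm S$ and $T\in\binom{S}{r-2}$; stressedness makes $T\cup\{h\}$ independent, so it extends to a basis $B$ of $M$ whose unique element outside $H$ is some $x$, and then for every $\widetilde B\in\binom{S}{r}$ and $y\in\widetilde B\sm B$ the set $(B\sm\{x\})\cup\{y\}$ lies in $H$ (so is not in $\B$) and contains $h\notin S$ (so is not in $\binom{S}{r}$). Until you supply an argument of one of these kinds, the second necessary condition remains unproved.
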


\begin{proof}
Let $\widetilde{\mathcal{B}}=\mathcal{B}\cup\binom{S}{r}$.
Suppose $\rank(S)<r-1$ and let $T\subseteq S$ be a maximal independent subset.
Note that $|T|<r-1$.
Let $B$ be a basis of $M$ with $T\subseteq B$ and let $\widetilde{B} \in \binom{S}{r}$ with $T\subseteq \widetilde{B}$.
Notice that $B \cap S = T$ because $B\cap S$ is independent in $M$ and $T$ is a maximal independent subset of $S$.
Let $x\in \widetilde{B} \sm B$.
Then $\widetilde{B} \sm \{x\}$ is dependent in $M$ as it is a size $r-1$ subset of $S$.
Therefore, for any $y \in B \sm \widetilde{B}$, if $(\widetilde{B}\sm\{x\}) \cup \{y\} \in \widetilde{\mathcal{B}}$, then it must be the case that $(\widetilde{B}\sm\{x\})\cup \{y\} \in \binom{S}{r}$.
This would imply that $y\in S$ which is impossible because $B\cap S = T$, but $y\not\in T\subseteq \widetilde{B}$.
Thus, the basis exchange axiom fails for $\widetilde{\mathcal{B}}$ in this case.

Now suppose $S$ has rank $r-1$ and is properly contained in a stressed hyperplane $H$ of $M$.
Then there exists $h\in H\sm S$.
Let $T \in \binom{S}{r-2}$ and let $B$ be a basis of $M$ containing $T \cup \{h\}$.
Note that such a basis $B$ must exist because $T \cup \{h\}$ is a size $r-1$ subset of the stressed hyperplane $H$ and is therefore independent in $M$.
Furthermore, there exists a unique element $x \in B \sm H$.
Then, let $\widetilde{B} \in \binom{S}{r}$.
Notice that $x \in B \sm \widetilde{B}$ because $\widetilde{B} \subseteq H$.
For any $y \in \widetilde{B} \sm B$, we have $h \in (B \sm \{x\}) \cup \{y\} \subseteq H$.
This means that $(B \sm \{x\}) \cup \{y\} \not\in \mathcal{B}$ as $H$ contains no bases of $M$ and also $(B\sm \{x\}) \cup \{y\} \notin \binom{S}{r}$ since $h \notin S$.
Thus, $(B\sm \{x\}) \cup \{y\} \notin \widetilde{\mathcal{B}}$ so the basis exchange axiom fails for $\widetilde{\mathcal{B}}$ in this case too.
\end{proof}

Applying Proposition~\ref{pillow} to paving matroids (where every hyperplane is stressed) and combining with Proposition~\ref{prop:relaxation}, we obtain the following:

\begin{proposition}\label{prop:general-relaxing-for-paving}
Let $M$ be a rank $r$ paving matroid with ground set $E$ and basis system $\mathcal{B}$.
Let $S \subseteq E$ be a subset of size at least $r$ containing no basis of $M$.
Then, $S$ can be relaxed if and only if $S$ is a hyperplane of $M$.
\end{proposition}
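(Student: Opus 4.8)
The plan is to obtain both implications as essentially immediate consequences of Propositions~\ref{pillow}, \ref{prop:stressed_hyps_paving}, and~\ref{prop:relaxation}, so the proof will be quite short.

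For the reverse direction, I would start by assuming $S$ is a hyperplane of $M$. Since $M$ is paving, Proposition~\ref{prop:stressed_hyps_paving} makes $S$ a stressed hyperplane, and then Proposition~\ref{prop:relaxation} says directly that $S$ can be relaxed. (The hypothesis $|S|\ge r$ plays no role here beyond ensuring that $\binom{S}{r}\neq\emptyset$, so that relaxing $S$ is a genuine operation rather than a vacuous one.)

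For the forward direction, I would assume $S$ can be relaxed and apply Proposition~\ref{pillow}, whose hypotheses ($|S|\ge r$ and $S$ contains no basis of $M$) are exactly what we are given. This yields $\rank(S)=r-1$ together with the fact that $S$ is not properly contained in any stressed hyperplane of $M$. Next I would pass to the closure $H$ of $S$ --- the smallest flat containing $S$ --- and use the standard matroid fact that closure preserves rank, so $\rank(H)=r-1$ and hence $H$ is a hyperplane with $S\subseteq H$. Since $M$ is paving, Proposition~\ref{prop:stressed_hyps_paving} makes $H$ stressed; a proper containment $S\subsetneq H$ would then contradict the conclusion just drawn from Proposition~\ref{pillow}, so $S=H$ and $S$ is a hyperplane, as desired.

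I do not anticipate any real obstacle here: essentially all the work has already been done in proving Proposition~\ref{pillow}, and the present statement just packages it together with the paving hypothesis. The only point requiring a moment's care is the invocation of the closure operator to produce a hyperplane containing a rank-$(r-1)$ set, which is routine.
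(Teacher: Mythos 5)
Your proposal is correct and follows essentially the same route as the paper, which derives the proposition exactly by combining Proposition~\ref{pillow} with Propositions~\ref{prop:stressed_hyps_paving} and~\ref{prop:relaxation}; your explicit closure argument just fills in the small step the paper leaves implicit (a rank-$(r-1)$ set in a paving matroid that is not properly contained in any stressed hyperplane must equal its closure, hence be a hyperplane).
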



\section{Panhandle matroids}\label{sec:panhandle}

\begin{definition}
Let $r\leq s<n$ be nonnegative integers.
The \defterm{panhandle matroid} $\Pan_{r,s,n}$ is the rank-$r$ matroid on ground set $[n]$ with basis system \[\B=\B_{r,s,n}=\left\{B\in\binom{[n]}{r}:\ |B\cap[s]|\geq r-1\right\}.\]
\end{definition}

One can check directly that $\B_{r,s,n}$ satisfies the axioms of a matroid basis system.
Alternatively, one can observe that $\B_{r,s,n}$ is a \textit{lattice path matroid}~\cite{LPM}, as we now explain.
Consider the Ferrers diagram shown in Figure~\ref{fig:panhandle} (from which the name ``panhandle'' derives).
Given a lattice path from $(0,0)$ to $(n-r,r)$ that stays in the Ferrers diagram, label its steps sequentially $1,\dots,n$.
The sets of North steps arising from such paths are precisely the elements of $\B_{r,s,n}$, hence form a matroid basis system.

\begin{center}
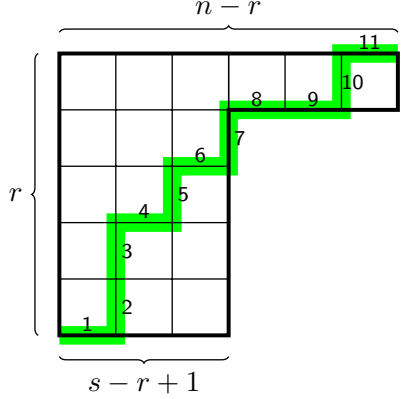
\begin{figure}[ht]
\begin{tikzpicture}[scale=0.75, line width=.5pt]
\draw[line width=7pt,green] (0,0)--(1,0)--(1,2)--(2,2)--(2,3)--(3,3)--(3,4)--(5,4)--(5,5)--(6,5);
\foreach \x/\y/\l in {0.5/0.2/1, 1.2/0.5/2, 1.2/1.5/3, 1.5/2.2/4, 2.2/2.5/5, 2.5/3.2/6, 3.2/3.5/7, 3.5/4.2/8, 4.5/4.2/9, 5.2/4.5/10, 5.5/5.2/11}
	\node at (\x,\y) {\scriptsize\sf\l};
\draw (3,4) grid (6,5);
\draw (0,0) grid (3,5);
\draw[decoration={brace,mirror, raise=8pt},decorate]
 (0,0) -- node[below=10pt] {$s-r+1$} (3,0);
\draw[decoration={brace,mirror, raise=8pt},decorate]
 (6,5) -- node[above=10pt] {$n-r$} (0,5);
\draw[decoration={brace,mirror, raise=8pt},decorate]
 (0,5) -- node[left=10pt] {$r$} (0,0);
\draw[line width=1.5pt](0,0)--(3,0)--(3,4)--(6,4)--(6,5)--(0,5)--cycle;
\end{tikzpicture}
\caption{The panhandle matroid $\Pan_{5,7,11}$ as a lattice matroid.  The lattice path shown in green gives rise to the basis $\{2,3,5,7,10\}$.}\label{fig:panhandle}
\end{figure}
\end{center}

The panhandle matroid is connected by \cite[Theorem~3.6]{LPM}.
Special cases include the \textit{minimal matroids} of \cite{Ferroni2021A}, which are equivalent to panhandle matroids $\Pan_{r,r,n}$ (where the Ferrers diagram is a hook shape), and the uniform matroids $\Pan_{r,n-1,n}\cong U_{r,n}$.

The Ehrhart theory of lattice path matroid polytopes has been studied in~\cite{Bidkhori,KMR}.  At present, there is no formula known for their Ehrhart polynomials (although a non-polynomial formula for their Ehrhart functions appears in~\cite{Bidkhori}).  We will give a polynomial formula for the special case of panhandle matroids, which will be a key ingredient in the formula for Ehrhart polynomials of paving matroids.

\begin{proposition}\label{prop:rank_panhandle}
Let $\rank: 2^{[n]}\to \N$ be the rank function for $\Pan_{r,s,n}$ and $T\subseteq [n]$. Let $T_1:=T\cap  [s]$ and $T_2:=T\cap [s+1,n]$.
Then

\[\rank(T)=\begin{cases}
\min(|T_1|,r) & T_2=\emptyset\\
\min(|T_1|+1,r) & \text{otherwise}
\end{cases}.\]
\end{proposition}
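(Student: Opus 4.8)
The plan is to compute $\rank(T)=\max\{|T\cap B|:\ B\in\B\}$ directly from the basis system $\B=\B_{r,s,n}$, establishing matching upper and lower bounds in each of the two cases. The one structural fact I keep using is that every $B\in\B$ satisfies $|B\cap[s]|\ge r-1$ and $|B|=r$, so $|B\cap[s+1,n]|=r-|B\cap[s]|\le 1$.

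For the upper bound, note that for any $B\in\B$ we have $|T\cap B|=|T_1\cap B|+|T_2\cap B|$. When $T_2=\emptyset$ this is just $|T_1\cap B|\le\min(|T_1|,r)$, since $T_1\cap B\subseteq T_1$ and $T_1\cap B\subseteq B$. When $T_2\neq\emptyset$ we bound $|T_1\cap B|\le|T_1|$ and $|T_2\cap B|\le|B\cap[s+1,n]|\le1$, so $|T\cap B|\le|T_1|+1$, and also $|T\cap B|\le|B|=r$; hence $|T\cap B|\le\min(|T_1|+1,r)$. This gives ``$\le$'' in both cases. For the lower bound I would exhibit an explicit basis attaining the claimed value, splitting according to whether the minimum is $r$ or $|T_1|$ (resp.\ $|T_1|+1$). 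If $|T_1|\ge r$, any $r$-subset $B\subseteq T_1\subseteq[s]$ lies in $\B$ (as $|B\cap[s]|=r\ge r-1$) and satisfies $|T\cap B|=r$, covering both cases at once. If $|T_1|<r$ and $T_2=\emptyset$, extend $T_1$ to an $r$-subset $B$ of $[s]$ — possible because $|[s]\setminus T_1|=s-|T_1|\ge r-|T_1|$ using $r\le s$ — so that $B\in\B$, $T_1\subseteq B$, and $B\cap T=T_1$, giving $|T\cap B|=|T_1|$. If $|T_1|<r$ and $T_2\neq\emptyset$, pick $t\in T_2$ and choose $S\subseteq[s]\setminus T_1$ with $|S|=r-1-|T_1|$ (possible since $s\ge r-1$), and set $B=T_1\cup\{t\}\cup S$; then $|B|=r$, $B\cap[s]=T_1\cup S$ has size $r-1$ so $B\in\B$, and $S\cap T=\emptyset$ forces $|T\cap B|=|T_1|+1$.

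There is no real obstacle here: the argument is a routine double-counting paired with the two explicit basis constructions. The only point that needs a moment's care is verifying that the required subsets of $[s]\setminus T_1$ are large enough to carry out the constructions, which is precisely where the standing hypothesis $r\le s$ enters.
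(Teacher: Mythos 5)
Your proof is correct and is essentially the paper's argument: both rest on the observation that every basis of $\Pan_{r,s,n}$ meets $[s+1,n]$ in at most one element (giving the upper bound) and both obtain the lower bound by producing a suitable independent set, i.e.\ a basis meeting $T$ in the claimed number of elements. The paper phrases this in terms of independence and dependence of subsets, while you work directly from $\rank(T)=\max\{|T\cap B|:\ B\in\B\}$ and exhibit the bases explicitly (checking the cardinality conditions where $r\le s$ is used), but the underlying content is the same.
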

\begin{proof}

If $T_2=\emptyset$, then $T\subseteq [s]$.
When $|T|\leq r$, then note that $T$ is independent since it contains no elements of $[s+1,n]$, and so $\rank(T)=|T|$.
If instead $|T|> r$, then $T$ must contain a basis for $\Pan_{r,s,n}$ and so $\rank(T)=r$.

Suppose otherwise that $T_2\neq \emptyset$.
Notice that for $|T_1|\leq r-1$ and any $x\in [s+1,n]$, $T_1\cup \{x\}$ is an independent set.
Then \[\rank(T_1\cup T_2)\geq |T_1|+1.\]
However, we also have $\rank(T_1\cup T_2)\leq |T_1|+1$ as the elements of $[s+1,n]$ (and hence $T_2$) are dependent.
Hence, $\rank (T)=|T_1|+1$.
On the other hand, if $|T_1|\geq r-1$, then $\rank(T)=r$, because $T$ is either a basis (if $|T_1|=r-1$) or contains a basis (if $|T_1|>r-1$).

\end{proof}

We use our understanding of the rank function for panhandle matroids to classify the flats.
\begin{proposition}\label{prop:flats_for_panhandle}
The flats with rank less than $r$ of $\Pan_{r,s,n}$ are of two types:
\begin{itemize}
    \item {Type 1:} Subsets of $[s]$ of size at most $r-1$.
    \item {Type 2:} $[s+1,n]\cup A$ where $A\subseteq [s]$ is a set of size at most $r-2$.
\end{itemize}
\end{proposition}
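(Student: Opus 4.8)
The plan is to argue directly from the definition of a flat --- a set $F$ with $\rank(F\cup\{x\})>\rank(F)$ for every $x\notin F$ --- using the explicit rank formula of Proposition~\ref{prop:rank_panhandle}. I would prove the two inclusions separately: first that every set of Type 1 or Type 2 is a flat of rank $<r$, and then conversely that every flat of rank $<r$ has one of these two forms. Throughout I write $F_1=F\cap[s]$ and $F_2=F\cap[s+1,n]$, as in Proposition~\ref{prop:rank_panhandle}.

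For the forward direction, a Type 1 set $F\subseteq[s]$ with $|F|=k\le r-1$ has $\rank(F)=\min(k,r)=k<r$; and adjoining any $x\notin F$ strictly raises the rank, to $\min(k+1,r)=k+1$ if $x\in[s]$ (the enlarged set still lies in $[s]$) and again to $\min(k+1,r)=k+1$ if $x\in[s+1,n]$ (the $[s]$-part stays at size $k$ while the other part becomes nonempty). A Type 2 set $F=[s+1,n]\cup A$ with $|A|=k\le r-2$ has $\rank(F)=\min(k+1,r)=k+1<r$; any $x\notin F$ lies in $[s]\sm A$, so $F\cup\{x\}$ has $[s]$-part of size $k+1$ and nonempty complementary part, giving rank $\min(k+2,r)=k+2>k+1$. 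Thus both families consist of flats of rank $<r$.

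For the converse, let $F$ be a flat with $\rank(F)<r$. If $F_2=\emptyset$, then $F\subseteq[s]$ and $\rank(F)=\min(|F|,r)<r$ forces $|F|\le r-1$, so $F$ is Type 1. If $F_2\ne\emptyset$, then $\rank(F)=\min(|F_1|+1,r)<r$ forces $|F_1|\le r-2$; here the one genuinely substantive step is to observe that, since the rank of any set meeting $[s+1,n]$ depends only on the size of its intersection with $[s]$, adjoining an element $y\in[s+1,n]\sm F$ to $F$ would leave the rank unchanged --- so the flat condition forces $[s+1,n]\subseteq F$, whence $F=[s+1,n]\cup F_1$ with $|F_1|\le r-2$ is Type 2. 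This last observation --- that a flat containing even one element of the ``handle'' $[s+1,n]$ must contain all of it --- is the crux of the argument; everything else is direct bookkeeping with the piecewise rank formula. Finally, since $s<n$ the two families are disjoint (a Type 2 flat is never contained in $[s]$), so the statement genuinely partitions the flats of rank $<r$.
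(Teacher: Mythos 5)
Your proof is correct and follows essentially the same route as the paper's: both verify flatness directly from the rank formula of Proposition~\ref{prop:rank_panhandle}, with the same key observation that adjoining an element of $[s+1,n]$ to a set already meeting $[s+1,n]$ does not raise the rank, so any such flat must contain all of $[s+1,n]$. The only difference is organizational (you separate the two inclusions explicitly, while the paper interleaves them), which does not change the substance.
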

\begin{proof}

First, suppose that $F\subseteq [s]$ is a set of cardinality at most $r-1$.
If $x\in [n]\sm F$, observe that $\rank(F\cup x)=|F|+1>|F|=\rank(F)$ by Proposition \ref{prop:rank_panhandle}.
Hence, $F$ must be a flat.
On the other hand, if $|F|\geq r$, note that $\rank(F)=r$, and hence $F$ could not be a flat of rank less than $r$.

Now suppose that $F\nsubseteq [s]$, and so $F\cap [s+1,n]\neq \emptyset$.
Observe that $[s+1,n]$ is a dependent set.
Hence, any flat containing an element from $[s+1,n]$ must contain all of $[s+1,n]$.
To this end, if $F$ is to be a flat, we may assume that $F=[s+1,n]\cup A$, where $A \subseteq [s]$.
For $F$ to have rank less than $r$, we must also have that $|A|\leq r-2$ by Proposition \ref{prop:rank_panhandle}.
Thus, for any $x\in [n]\sm F$, we have $A\subsetneq A\cup \{x\}\subseteq [s]$.
Combining this with Proposition \ref{prop:rank_panhandle}, we get \[\rank(([s+1,n]\cup A)\cup\{y\})= |A\cup \{y\}|+1>|A|+1= \rank([s+1,n]\cup A).\]
That is, $F$ is a flat.
\end{proof}

Combining Proposition \ref{prop:flats_for_panhandle} with~\eqref{feichtner} yields the following.

\begin{proposition}\label{Pan-characterization}
The polytope $\Po_{\Pan_{r,s,n}}$ is given by
\[
\Po_{\Pan_{r,s,n}}=\{(x_1,\dots, x_n)\in [0,1]^n\ :\ \sum_{i=1}^nx_i=r \textrm{ and } \sum_{i=s+1}^n x_i \leq 1\}.
\]
\end{proposition}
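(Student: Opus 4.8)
The plan is to apply the inequality description \eqref{feichtner} of matroid base polytopes directly, using the classification of flats from Proposition~\ref{prop:flats_for_panhandle}. Recall that \eqref{feichtner} says $\Po_{\Pan_{r,s,n}}$ consists of all $(x_1,\dots,x_n)\in\R_{\geq 0}^n$ with $\sum_i x_i = r$ and $\sum_{i\in F} x_i \leq \rank(F)$ for every flat $F$. So I would start by writing down what the flat inequalities give us, flat by flat, and check that they collapse to the claimed two conditions plus the box constraint $x_i\leq 1$.

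First I would handle the Type~1 flats: for a subset $F\subseteq[s]$ with $|F|\leq r-1$, the constraint is $\sum_{i\in F}x_i\leq |F|$. Taking $F$ to be a single element $\{i\}$ with $i\in[s]$ gives $x_i\leq 1$; and any Type~1 inequality with $|F|\geq 2$ is then implied by summing the singleton inequalities $x_i\leq 1$ over $i\in F$ (using $|F|\leq r-1 < n$, so all singletons $\{i\}\subseteq[s]$ are indeed Type~1 flats, noting $r\geq 1$). Next I would handle the Type~2 flats: $F=[s+1,n]\cup A$ with $A\subseteq[s]$, $|A|\leq r-2$, has $\rank(F)=|A|+1$ by Proposition~\ref{prop:rank_panhandle}, so the constraint is $\sum_{i=s+1}^n x_i + \sum_{i\in A}x_i \leq |A|+1$. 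Taking $A=\emptyset$ (valid since $r\geq 1$, though one should note the edge case $r=1$ where $[s+1,n]$ itself has rank $1$ and $\emptyset$ is the flat) gives $\sum_{i=s+1}^n x_i\leq 1$; and for nonempty $A$, the inequality is implied by combining $\sum_{i=s+1}^n x_i\leq 1$ with $x_i\leq 1$ for $i\in A$. Conversely, the box constraints $0\leq x_i\leq 1$ together with $\sum x_i = r$ and $\sum_{i=s+1}^n x_i\leq 1$ must imply all flat inequalities — which is exactly what the redundancy arguments above show, so both containments follow. This establishes the claimed equality of polytopes, where I also use that the coordinates are automatically $\leq 1$ to replace $\R_{\geq 0}^n$ by $[0,1]^n$ in the statement.

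The one genuinely fiddly point — and the step I'd expect to demand the most care — is the bookkeeping around small values of $r$, since Propositions~\ref{prop:rank_panhandle} and~\ref{prop:flats_for_panhandle} implicitly assume $r$ is at least $1$ or $2$ for certain flat types to be nonempty, and when $r=1$ the panhandle is just $U_{1,n}$ restricted appropriately and the "Type~1 singleton" and "Type~2 with $A=\emptyset$" arguments need to be checked not to overlap or degenerate. I would dispatch this by first noting $r\geq 1$ always (a rank-$0$ matroid has empty basis system, excluded), handling $r=1$ as an immediate special case if needed, and then running the general argument for $r\geq 2$. Beyond that, the proof is a routine exercise in recognizing which linear inequalities in a finite list are redundant given the others, so I would keep it brief: state the two families of inequalities coming from Proposition~\ref{prop:flats_for_panhandle}, observe that the minimal (non-redundant) ones are precisely $x_i\leq 1$ and $\sum_{i=s+1}^n x_i\leq 1$, and conclude.
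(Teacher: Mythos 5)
Your proposal is correct and follows essentially the same route as the paper: apply the inequality description \eqref{feichtner} together with the flat classification of Proposition~\ref{prop:flats_for_panhandle}, observe that Type~1 flat inequalities amount to (or are implied by) the singleton bounds $x_i\leq 1$, and that Type~2 inequalities reduce to $\sum_{i=s+1}^n x_i\leq 1$ via the $A=\emptyset$ case. Your extra attention to the singleton flats, the $[0,1]^n$ box constraint, and the degenerate $r=1$ case only makes explicit what the paper's proof leaves implicit, so no changes are needed.
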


\begin{proof}
We show that the inequality
\begin{equation}\label{eq:pan_poly_condition}
    \displaystyle\sum_{i\in F} x_i \leq \rank(F)
\end{equation} in~\eqref{feichtner} reduces to
\begin{equation}\label{eq:new_pan_poly_condition}
    \displaystyle \sum_{i=s+1}^n x_i \leq 1
\end{equation}

Recall by Proposition \ref{prop:flats_for_panhandle} that $\Pan_{r,s,n}$ has two types of flats.
The type-1 flats are the subsets of $[s]$ with cardinality at most $r-1$, which are also independent.
Hence, if $F$ is a type-1 flat, inequality \eqref{eq:pan_poly_condition} is superfluous since  $x_i\leq 1$ for all $i$ and $\rank(F)=|F|$.

The type-2 flats are the sets of the form $[s+1,n]\cup A$ where $A\subseteq [s]$ is a set of cardinality at most $r-2$.
In this case, if $F$ is such a flat, recall from Proposition \ref{prop:rank_panhandle} that $\rank(F)=|A|+1$.
Hence, inequality~\eqref{eq:pan_poly_condition} becomes
\[
\sum_{i=s+1}^nx_i+\sum_{i\in A}x_i\leq |A|+1,
\]
Note that $(x_1,\dots, x_n)$ satisfies this inequality if and only if
\[\sum_{i=s+1}^nx_i\leq 1,\]
since the requirement that $x_i\leq 1$ for all $i$ implies that $\displaystyle \sum_{i\in A}x_i\leq |A|$.
Hence, inequality~\eqref{eq:pan_poly_condition} reduces to inequality~\eqref{eq:new_pan_poly_condition}.
\end{proof}


\section{Proofs of the main theorems} \label{sec:ehrhart}

Recall our conventions on binomial coefficients: if $n$ and $k$ are integers, then $\binom{n}{k}=0$ whenever $k<0$ or $k>n$, and when $f(t)$ is a polynomial, we put $\binom{f(t)}{k}=f(t)(f(t)-1)\cdots (f(t)-k+1)/k!$.

\subsection{Ehrhart polynomials of panhandle matroids}

The core of the calculation for panhandle matroids is the following result about counting integer solutions to certain linear equations.

\begin{lemma}\label{lemma:intsolutions}
Fix $s,t,r,m$ with $0\leq r \leq s$ and $0\leq m \leq t$.
The number of nonnegative integer solutions to $\sum_{j=1}^sx_j = tr-m$, with $0\leq x_j\leq t$, is
\begin{equation}\label{solutions}
\sum_{i=0}^{s-r} (-1)^i \binom{s}{i} \binom{t(s-r-i)+m+s-1-i}{s-1}.
\end{equation}
\end{lemma}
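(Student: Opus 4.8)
The plan is to count nonnegative integer solutions to $\sum_{j=1}^s x_j = tr-m$ subject to the box constraints $0\le x_j\le t$ by the standard inclusion–exclusion over which coordinates violate the upper bound. First I would recall that, \emph{without} the upper-bound constraints, the number of nonnegative integer solutions to $\sum_{j=1}^s x_j = N$ is $\binom{N+s-1}{s-1}$. To incorporate the constraints $x_j\le t$, I would introduce, for each index $j$ in a chosen subset $I\subseteq[s]$, the substitution $x_j = t+1+y_j$ with $y_j\ge 0$ (this is the usual way of forcing $x_j\ge t+1$), and apply inclusion–exclusion on the ``bad'' event that $x_j\ge t+1$. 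This gives
\[
\#\{\text{solutions}\} \;=\; \sum_{i\ge 0} (-1)^i \binom{s}{i}\binom{(tr-m)-i(t+1)+s-1}{s-1}.
\]

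Next I would simplify the argument of the inner binomial coefficient. We have $(tr-m)-i(t+1)+s-1 = t(r-i) - m - i + s - 1 = t(s-r-i) + \big(t r - t s\big) + \dots$; more carefully, writing it so that the answer matches \eqref{solutions}, note $(tr - m) - i(t+1) + s - 1 = t(s - r - i) + m + s - 1 - i$ fails dimensionally, so instead I would directly verify: $(tr-m) - i(t+1) + s-1 = tr - m - it - i + s - 1$. To match the claimed form $t(s-r-i)+m+s-1-i = ts - tr - ti + m + s - 1 - i$, one compares and finds the two expressions differ, which signals that the correct route is to substitute the \emph{complementary} variables. So instead I would set $z_j = t - x_j\ge 0$, turning the equation into $\sum_{j=1}^s z_j = ts - (tr-m) = t(s-r)+m =: N'$, with the \emph{same} box constraints $0\le z_j\le t$. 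Now the bad event is $z_j\ge t+1$, and inclusion–exclusion yields
\[
\sum_{i\ge 0}(-1)^i\binom{s}{i}\binom{N' - i(t+1) + s-1}{s-1} = \sum_{i\ge 0}(-1)^i\binom{s}{i}\binom{t(s-r-i)+m+s-1-i}{s-1},
\]
which is exactly \eqref{solutions}.

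Then I would address the truncation of the sum. The binomial coefficient $\binom{t(s-r-i)+m+s-1-i}{s-1}$ is a polynomial in its top argument of degree $s-1$; it vanishes when the top argument lies in $\{0,1,\dots,s-2\}$ but can be nonzero (and must be interpreted as the polynomial, possibly giving a nonzero value) when the top argument is negative. I would check that for $i > s-r$ the contributions vanish: when $i = s-r+1,\dots,s$, using $0\le m\le t$ one verifies $t(s-r-i)+m+s-1-i$ ranges over values in $\{-1,\dots\}$ that keep it in the ``dead zone'' $\{-1,-2,\dots,-(t)\}$ shifted appropriately — concretely, for these $i$ the top argument is at least $-(s-1)$ and at most $-1$ (this is where $m\le t$ is used), so $\binom{\cdot}{s-1}=0$ by the polynomial convention. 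Hence the sum may be truncated at $i = s-r$, matching \eqref{solutions} exactly. I would also note $i\le s$ automatically from $\binom{s}{i}$.

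The main obstacle I anticipate is purely bookkeeping: getting the algebra of the binomial-coefficient argument exactly right after the complementary substitution, and then carefully justifying that the ``extra'' terms with $s-r < i \le s$ vanish — this requires invoking the polynomial (rather than combinatorial) interpretation of $\binom{a}{s-1}$ for negative $a$ and checking the inequality $-(s-1)\le t(s-r-i)+m+s-1-i \le -1$ on that range of $i$, which is where the hypotheses $0\le m\le t$ and $r\le s$ are essential. Everything else is the textbook stars-and-bars plus inclusion–exclusion argument.
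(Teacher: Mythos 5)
Your main argument is the same as the paper's: pass to the complementary variables $z_j=t-x_j$ (the paper's $y_j$), so that one counts nonnegative solutions of $\sum_{j=1}^s z_j=t(s-r)+m$ with $z_j\le t$, and then run inclusion--exclusion over the coordinates exceeding $t$ via the shift $z_j\mapsto z_j-(t+1)$, which produces exactly the binomial $\binom{t(s-r-i)+m+s-1-i}{s-1}$ for a set of $i$ violated coordinates. That part is correct and is precisely the proof of Lemma~\ref{lemma:intsolutions} given in the paper.

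The step that does not hold as written is your justification for truncating the sum at $i=s-r$. You assert that for $s-r<i\le s$ the top argument $t(s-r-i)+m+s-1-i$ lies between $-(s-1)$ and $-1$, and that $\binom{\cdot}{s-1}$ then vanishes ``by the polynomial convention.'' Both claims are false: with $s=3$, $r=2$, $t=10$, $m=0$, $i=2$ the top argument is $-10<-(s-1)$, and the polynomial $\binom{x}{s-1}=x(x-1)\cdots(x-s+2)/(s-1)!$ is \emph{nonzero} at every negative integer (e.g.\ $\binom{-10}{2}=55$) --- indeed you note this yourself earlier in the write-up, so the later sentence contradicts it. The correct and more elementary reason the terms with $i>s-r$ drop out is that the corresponding counts are literally zero: if $i\ge s-r+1$ coordinates are forced to be at least $t+1$, the remaining total is $t(s-r)+m-i(t+1)\le t(s-r)+t-(s-r+1)(t+1)=-(s-r)-1<0$ (this is where $m\le t$ is used), so there are no such solutions and those inclusion--exclusion terms never appear; equivalently, one adopts the counting convention $\binom{a}{s-1}=0$ for negative integers $a$, not the polynomial one. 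For the surviving range $0\le i\le s-r$ the top argument is at least $r-1\ge 0$, so stars-and-bars applies literally and the binomial correctly reports the count (including the cases where it is $0$ because the top is smaller than $s-1$). With that one-line repair, your proof coincides with the paper's.
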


\begin{proof}
Setting $y_j=t-x_j$, we see that it is equivalent to count integer solutions to the equation
\begin{equation} \label{solns-y-form}
\sum_{j=1}^sy_j = ts - (tr-m)=t(s-r)+m
\end{equation}
with $0\leq y_j\leq t$ for all $j$. 
For $I\subseteq [s]$, we claim that the number of nonnegative solutions to~\eqref{solns-y-form} for which $y_j>t$ if $j\in I$ is $\binom{t(s-r-i)+m+s-1-i}{s-1}$.
Indeed, setting $z_j=y_j-(t+1)$ for $j\in I$ and $z_j=y_j$ otherwise. 
Then, letting $i=|I|$, equation~\eqref{solns-y-form} becomes
\[\sum_{j=1}^sz_j = t(s-r)+m-i(t+1)=t(s-r-i)+m-i\]
which has $\binom{t(s-r-i)+m+s-1-i}{s-1}$ nonnegative solutions. Note that this binomial coefficient is 0 if $i>s-r$, due to the assumption $m\leq t$.
Now~\eqref{solutions} follows by inclusion/exclusion. 
\end{proof}

As pointed out by an anonymous referee, the result of Lemma~\ref{lemma:intsolutions} may hold in greater generality (e.g., relaxing the condition $m\leq t$).  However, imposing this restriction makes the proof easier, and we will solely be concerned with the case $m\leq t$ in what follows.

\begin{proposition}\label{prop:Ehrhart-panhandle}
The value of the Ehrhart function of the polytope $\Po_{\Pan_{r,s,n}}$ for given positive integer $t$ is
\[
    \ehr_{{\Pan_{r,s,n}}}(t)=\sum_{m=0}^t\sum_{i=0}^{s-r}(-1)^i\binom{s}{i}\binom{t(s-r-i)+m+s-1-i}{s-1}\binom{m+n-s-1}{m}.
\]
\end{proposition}

\begin{proof}
By Proposition \ref{Pan-characterization}, we have the following.
\begin{align*}
\ehr_{{\Pan_{r,s,n}}}(t)&=\#(\mathbb{Z}^n\cap t\Po_{\Pan_{r,s,n}})\\
&=\#\left\{x\in [0,t]^n\ :\ \sum_{i=1}^n x_i =tr,\ \sum_{i=s+1}^n x_i\leq t\right\}\\
&=\sum_{m=0}^t \#\left\{x\in [0,t]^n\ :\ \sum_{i=1}^n x_i =tr,\ \sum_{i=s+1}^n x_i=m\right\}\\
&=\sum_{m=0}^t \#\left\{x\in [0,t]^n\ :\ \sum_{i=1}^{s} x_i =tr-m,\ \sum_{i=s+1}^n x_i=m\right\}\\
&=\sum_{m=0}^t\sum_{i=0}^{s-r}(-1)^i\binom{s}{i}\binom{t(s-r-i)+m+s-1-i}{s-1}\binom{m+n-s-1}{m}
\end{align*}
The last equality follows from Lemma \ref{lemma:intsolutions} and a standard stars-and-bars argument.
\end{proof}

The previous formula cannot be considered a polynomial in the dilation factor~$t$, which appears as a limit of summation.  
The next result rewrites the formula as a genuine polynomial in~$t$. 
In Theorem \ref{thm:betterPanhandleformula} below, we consider the right side of equation (\ref{eq:FirstEhrhartPoly}) to be a polynomial in $t$ by interpreting each binomial term containing $t$ therein as a polynomial.
Since $\ehr_{{\Pan_{r,s,n}}}(t)$ is given by a polynomial in $t$ by Ehrhart's Theorem, it suffices to show that each side of equation (\ref{eq:FirstEhrhartPoly}) agree for infinitely many values of $t$ to show that they agree as polynomials. In particular, it suffices to show that they agree for positive integer values of $t$.
Thus, we implicitly assume that $t$ is a positive integer in the proof of Theorem \ref{thm:betterPanhandleformula}, and we do this in several proofs throughout unless otherwise stated. 
Since the upper term of each binomial term in the proof of Theorem \ref{thm:betterPanhandleformula} is nonnegative for each positive integer value of $t$, there should be no confusion between the two previously mentioned interpretations of the binomial terms.



\begin{theorem}\label{thm:betterPanhandleformula}
The Ehrhart polynomial for the polytope $\Po_{\Pan_{r,s,n}}$ is
\begin{equation}\label{eq:FirstEhrhartPoly}
\ehr_{{\Pan_{r,s,n}}}(t)=\sum_{i=0}^{s-r}(-1)^i\binom{s}{i}\sum_{\ell=0}^{s-1}\binom{t(s-r-i)+s-1-i}{s-1-\ell}\binom{t+n-s}{n-s}\binom{t}{\ell}\frac{n-s}{n-s+\ell}.
\end{equation}
\end{theorem}

\begin{proof}
We start by rewriting Proposition~\ref{prop:Ehrhart-panhandle} using the Chu-Vandermonde identity \cite[eq.~(5.22), p.~169]{concretemath}:
\begin{align*}
    \ehr_{\Pan_{r,s,n}}(t)&=\sum_{m=0}^t\sum_{i=0}^{s-r}(-1)^i\binom{s}{i}\binom{t(s-r-i)+m+s-1-i}{s-1}\binom{m+n-s-1}{m}\\
   &=\sum_{m=0}^t\sum_{i=0}^{s-r}(-1)^i\binom{s}{i}\sum_{\ell=0}^{s-1}\binom{t(s-r-i)+s-1-i}{s-1-\ell}\binom{n-s-1+\ell}{\ell}\binom{m+n-s-1}{m-\ell}
\intertext{(using the identity $\binom{K}{R}\binom{N}{K}=\binom{N-R}{K-R}\binom{N}{R}$); note that $n-s-1\geq0$)}
    &=\sum_{m=0}^t\sum_{i=0}^{s-r}(-1)^i\binom{s}{i}\sum_{\ell=0}^{s-1}\binom{t(s-r-i)+s-1-i}{s-1-\ell}\binom{n-s-1+\ell}{\ell}\binom{m+n-s-1}{n-s-1+\ell}\\
    &=\sum_{i=0}^{s-r}(-1)^i\binom{s}{i}\sum_{\ell=0}^{s-1}\binom{t(s-r-i)+s-1-i}{s-1-\ell}\binom{n-s-1+\ell}{\ell}\sum_{m=0}^t\binom{m+n-s-1}{n-s-1+\ell}\\
    &=\sum_{i=0}^{s-r}(-1)^i\binom{s}{i}\sum_{\ell=0}^{s-1}\binom{t(s-r-i)+s-1-i}{s-1-\ell}\binom{n-s-1+\ell}{\ell}\binom{t+n-s}{n-s+\ell}
\intertext{(using the ``hockey-stick identity'' $\sum_{I=R}^N \binom{I}{R} = \binom{N+1}{R+1}$ \cite[p.~160]{concretemath})}
    &=\sum_{i=0}^{s-r}(-1)^i\binom{s}{i}\sum_{\ell=0}^{s-1}\binom{t(s-r-i)+s-1-i}{s-1-\ell}\binom{t+n-s}{n-s}\binom{t}{\ell}\frac{n-s}{n-s+\ell}. \qedhere\end{align*}
\end{proof}

The following is yet another way to write $\ehr_{\Pan_{r,s,n}}(t)$, which will be useful in our progress toward establishing Ehrhart positivity of panhandle matroids.
The proof requires a highly technical result, Lemma~\ref{lem:genfunc}, whose proof we defer to an appendix.

\begin{corollary}\label{formula for positivity}
The Ehrhart polynomial of $\Po_{\Pan_{r,s,n}}$ can be alternatively written as
\[\ehr_{\Pan_{r,s,n}}(t)=
\frac{n-s}{(n-1)!}\binom{t+n-s}{n-s}
\phi_{r,s,n}(t)\]
where
\begin{equation}\label{phisrlt}
\phi_{r,s,n}(t)
=\sum_{i=0}^{s-r}(-1)^i\binom{s}{i}\sum_{\ell=0}^{s-1}(n-2-\ell)!\ell!\binom{s-1-\ell-i+t(s-r-i+1)}{s-1-\ell}\binom{s-1-i+t(s-r-i)}{\ell}.
\end{equation}
\end{corollary}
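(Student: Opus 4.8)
The plan is to start from the expression for $\ehr_{\Pan_{r,s,n}}(t)$ in Corollary~\ref{thm:betterPanhandleformula} and massage it into the claimed form by isolating the factor $\frac{n-s}{(n-1)!}\binom{t+n-s}{n-s}$. The right-hand side of Corollary~\ref{thm:betterPanhandleformula} is
\[
\sum_{i=0}^{s-r}(-1)^i\binom{s}{i}\sum_{\ell=0}^{s-1}\binom{t(s-r-i)+s-1-i}{s-1-\ell}\binom{t+n-s}{n-s}\binom{t}{\ell}\frac{n-s}{n-s+\ell},
\]
and the factors $\binom{t+n-s}{n-s}$ and $(n-s)$ already appear. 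So after pulling out $\binom{t+n-s}{n-s}\cdot\frac{n-s}{(n-1)!}$ I need to show that
\[
\phi_{r,s,n}(t)=(n-1)!\sum_{i=0}^{s-r}(-1)^i\binom{s}{i}\sum_{\ell=0}^{s-1}\binom{t(s-r-i)+s-1-i}{s-1-\ell}\binom{t}{\ell}\frac{1}{n-s+\ell}.
\]
The target $\phi_{r,s,n}(t)$ in~\eqref{phisrlt} has summand (over the same $i,\ell$ ranges)
\[
(n-2-\ell)!\,\ell!\,\binom{s-1-\ell-i+t(s-r-i+1)}{s-1-\ell}\binom{s-1-i+t(s-r-i)}{\ell}.
\]

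The work splits into two routine reconciliations and one genuine obstacle. First, the combinatorial coefficient: I must check that $(n-1)!\cdot\frac{1}{n-s+\ell}\cdot\binom{t}{\ell}$ (the prefactor in the reduced Corollary~\ref{thm:betterPanhandleformula} expression, after moving $\binom{t}{\ell}$ appropriately) matches $(n-2-\ell)!\,\ell!$ times the ratio of binomials that converts $\binom{t}{\ell}$ into $\binom{s-1-i+t(s-r-i)}{\ell}$. Concretely, $\frac{(n-1)!}{n-s+\ell}$ does \emph{not} literally equal $(n-2-\ell)!\,\ell!$ in general, so the passage from $\binom{t(s-r-i)+s-1-i}{s-1-\ell}$ to $\binom{s-1-\ell-i+t(s-r-i+1)}{s-1-\ell}$ cannot be an identity of individual summands — the two second-binomial arguments differ by $t$. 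This tells me the real content is an identity valid only after summing over $\ell$ (and possibly over $i$), which is precisely why the excerpt flags that the proof ``requires a highly technical result, Lemma~\ref{lem:genfunc}.''

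So the actual strategy is: invoke Lemma~\ref{lem:genfunc} (deferred to the appendix) as a black box. I expect Lemma~\ref{lem:genfunc} to assert an identity of the shape
\[
\sum_{\ell=0}^{s-1}\frac{1}{n-s+\ell}\binom{t}{\ell}\binom{a+bt}{s-1-\ell}
=\frac{1}{(n-1)!}\sum_{\ell=0}^{s-1}(n-2-\ell)!\,\ell!\,\binom{a-\ell+(b+1)t}{s-1-\ell}\binom{a+bt}{\ell}
\]
with $a=s-1-i$ and $b=s-r-i$, for each fixed $i$ — i.e., a finite hypergeometric-type identity that redistributes a $t$ between the two binomial factors at the cost of introducing the factorial weights. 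Given such a lemma, the proof of Corollary~\ref{formula for positivity} is: (1) write $\ehr_{\Pan_{r,s,n}}(t)$ via Corollary~\ref{thm:betterPanhandleformula}; (2) factor out $\frac{n-s}{(n-1)!}\binom{t+n-s}{n-s}$, absorbing the remaining $\frac{(n-1)!}{n-s+\ell}$; (3) apply Lemma~\ref{lem:genfunc} term-by-term in $i$ to the inner $\ell$-sum; (4) recognize the result as $\phi_{r,s,n}(t)$ as defined in~\eqref{phisrlt}, and conclude.

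The main obstacle is entirely contained in Lemma~\ref{lem:genfunc}: proving the binomial redistribution identity. The natural tool is generating functions — encode $\binom{a+bt}{s-1-\ell}$ and $\binom{a-\ell+(b+1)t}{\ell}$ as coefficient extractions from $(1+x)^{a+bt}$-type series, swap sums, and identify the factorial weights $(n-2-\ell)!\,\ell!$ as coming from a Beta-integral / partial-fractions expansion of $\frac{1}{n-s+\ell}$ (note $\frac{(n-2-\ell)!\,\ell!}{(n-1)!}=\frac{1}{(n-1)\binom{n-2}{\ell}}$, a reciprocal binomial, which is exactly the kind of term that arises from $\int_0^1 x^\ell(1-x)^{n-2-\ell}\,dx$). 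Since the excerpt explicitly defers this to an appendix, for the body of the paper I will simply cite Lemma~\ref{lem:genfunc} and carry out steps (1)–(4), which are all applications of named binomial identities already used in the proof of Corollary~\ref{thm:betterPanhandleformula}.
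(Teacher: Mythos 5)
Your proposal is correct and follows essentially the same route as the paper: start from Corollary~\ref{thm:betterPanhandleformula}, factor out $\frac{n-s}{(n-1)!}\binom{t+n-s}{n-s}$, and reduce to the inner $\ell$-sum identity, which is exactly Lemma~\ref{lem:genfunc} applied with $u=t$ for each fixed $i$ (and you even predicted the lemma's statement correctly, with $a=s-1-i$, $b=s-r-i$). Treating that lemma as a black box is precisely what the paper's own proof of the corollary does, deferring its (integral/coefficient-extraction) proof to the appendix.
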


\begin{proof}
Here, we assume that $t$ is a positive integer chosen to be large enough so that $s-1-(s-1)-(s-r)+t$ is nonnegative. This ensures that the upper term in each binomial coefficient, especially in those of the form $\binom{s-1-\ell-i+t(s-r-i+1)}{s-1-\ell}$, is nonnegative to avoid confusion between the two possible interpretations of binomial coefficients. 

Theorem~\ref{thm:betterPanhandleformula} implies
\begin{align*}
    \ehr_{{\Pan_{r,s,n}}}(t)&=\sum_{i=0}^{s-r}(-1)^i\binom{s}{i}\sum_{\ell=0}^{s-1}\binom{t(s-r-i)+s-1-i}{s-1-\ell}\binom{t+n-s}{n-s}\binom{t}{\ell}\frac{n-s}{n-s+\ell}\\
    &=(n-s)\binom{t+n-s}{n-s}\sum_{i=0}^{s-r}(-1)^i\binom{s}{i}\sum_{\ell=0}^{s-1}\binom{t(s-r-i)+s-1-i}{s-1-\ell}\binom{t}{\ell}\frac{1}{n-s+\ell}.\\
\end{align*}
It suffices to show that
\begin{align*}
&\sum_{\ell=0}^{s-1}\binom{t(s-r-i)+s-1-i}{s-1-\ell}\binom{t}{\ell}\frac{1}{n-s+\ell}\\
=&\frac{1}{(n-1)!}\sum_{\ell=0}^{s-1}(n-2-\ell)!\ell!\binom{s-1-\ell-i+t(s-r-i+1)}{s-1-\ell}\binom{s-1-i+t(s-r-i)}{\ell}.
\end{align*}
This follows from Lemma~\ref{lem:genfunc} with $u=t$.
\end{proof}

\begin{remark}\label{rmk:GeneralizeMinimal}
The matroid $\Pan_{r,r,n}$ is the \textit{minimal matroid} studied in~\cite{Ferroni2021A} (so called because it has the fewest bases among all connected matroids of rank~$r$ and ground set of size~$n$).
By Corollary \ref{formula for positivity}
\begin{align*}
\ehr_{{\Pan_{r,r,n}}}(t)&=\frac{n-r}{(n-1)!}\binom{t+n-r}{n-r}\sum_{\ell=0}^{r-1}(n-2-\ell)!\ell!\binom{r-1-\ell+t}{r-1-\ell}\binom{r-1}{\ell}\\
&= \frac{n-r}{(n-1)!}\binom{t+n-r}{n-r}\sum_{\ell=0}^{r-1}(n-r-1+\ell)!(r-1-\ell)!\binom{\ell+t}{\ell}\binom{r-1}{r-1-\ell}
\intertext{(replacing $\ell$ with $r-1-\ell$)}
&= \ehr_{{\Pan_{r,r,n}}}(t)=\frac{1}{\binom{n-1}{r-1}}\binom{t+n-r}{n-r}\sum_{\ell=0}^{r-1}\binom{n-r-1+\ell}{\ell}\binom{\ell+t}{\ell}
\end{align*}
by routine manipulation of factorials.
This formula is Theorem 1.6 in \cite{Ferroni2021A} for the Ehrhart polynomial of a minimal connected matroid.
\end{remark}

To conclude this subsection, we present the following conjecture which is supported by computational evidence. 

\begin{conjecture}
Consider the minimal matroid  $T_{r,n}$, the panhandle matroid $\Pan_{r,s,n}$, and the uniform matroid $U_{r,n}$.
The following inequality holds, coefficient-wise:
$$\ehr_{T_{r,n}}(t)\leq \ehr_{\Pan_{r,s,n}} \leq \ehr_{U_{r,n}}.$$
\end{conjecture}

\subsection{Ehrhart polynomials of stressed-hyperplane relaxations}

In \cite{Ferroni2021A}, Ferroni gives a formula for the Ehrhart polynomial $\ehr_{\widetilde M}(t)$ of the relaxation of a matroid $M$ by a circuit-hyperplane, in terms of $\ehr_M(t)$ and the Ehrhart polynomial of a minimal connected matroid.
We expand Ferroni's method to the case of a stressed-hyperplane relaxation, replacing the minimal connected matroid by a panhandle matroid.

\begin{lemma}\label{lem:relax-stressed}
Let $M$ be a rank $r$ matroid and let $H$ be a stressed hyperplane for $M$. 
Then each $r$-subset of $M$ with exactly one element outside $H$ is a basis of $M$.
\end{lemma}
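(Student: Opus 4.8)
The plan is to exploit the defining feature of a stressed hyperplane $H$: it is a flat of rank $r-1$ in which \emph{every} $(r-1)$-subset is independent, so that each $(r-1)$-subset of $H$ already spans the flat $H$. Granting this, an $r$-set meeting $E\sm H$ in exactly one point cannot drop in rank, and hence must be a basis.

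Concretely, I would fix an $r$-subset $B\subseteq E$ with exactly one element $e\in E\sm H$ and write $B=I\cup\{e\}$ with $I\subseteq H$ and $|I|=r-1$. Since $H$ is stressed, $I$ is independent, so $\rank(I)=r-1=\rank(H)$. The first (and only substantive) step is to show that $H$ is the smallest flat containing $I$: the smallest flat $F$ containing $I$ is contained in $H$ (because $H$ is itself a flat containing $I$), and $\rank(F)=\rank(I)=r-1=\rank(H)$; since $F\subseteq H$ are flats of equal rank, $F=H$ — a proper containment would force $\rank(H)>\rank(F)$ by the definition of a flat.

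The second step is immediate. If $B=I\cup\{e\}$ were dependent, then $\rank(B)<|B|=r$, and because $I\subseteq B$ gives $\rank(B)\ge\rank(I)=r-1$, we would have $\rank(I\cup\{e\})=r-1=\rank(I)$; adjoining $e$ to $I$ without raising the rank forces $e$ into the smallest flat containing $I$, which is $H$ by the previous step — contradicting $e\in E\sm H$. Hence $B$ is independent of size $r=\rank(M)$, so $B$ is a basis of $M$.

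I do not expect any real obstacle here: the argument is two lines once the right observation is isolated. The only place calling for a little care is the rank bookkeeping in the first step (a rank-$(r-1)$ flat contained in $H$ must equal $H$) together with the fact that $\rank(I\cup\{e\})=\rank(I)$ puts $e$ in the flat spanned by $I$; if one wishes to bypass closures entirely, this last point follows directly from submodularity of the rank function applied to $H$ and $I\cup\{e\}$, using $H\cap(I\cup\{e\})=I$ and $H\cup(I\cup\{e\})=H\cup\{e\}$.
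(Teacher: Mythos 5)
Your argument is correct, and it takes a different route from the paper's. The paper works entirely in the basis/independent-set language: it picks the independent $(r-1)$-set $\{h_1,\dots,h_{r-1}\}\subseteq H$, notes $\rank(H\cup\{u\})=r$ so some basis $B\subseteq H\cup\{u\}$ exists (necessarily containing $u$), and then augments $\{h_1,\dots,h_{r-1}\}$ by an element of $B$; since $H$ contains no basis, the augmenting element must be $u$, giving the desired basis. You instead argue through the rank function and flats: stressedness makes $I\subseteq H$ with $|I|=r-1$ independent of rank $r-1$, the closure of $I$ is then forced to be $H$ (a flat of rank $r-1$ contained in the rank-$(r-1)$ flat $H$ must equal it, by the paper's definition of flat), so adjoining $e\notin H$ must raise the rank, making $I\cup\{e\}$ independent of size $r$, hence a basis. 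Both proofs hinge on the same two inputs (independence of $(r-1)$-subsets of $H$, and the rank jump when leaving the hyperplane $H$), but the mechanisms differ: the paper's augmentation step is a standard consequence of the basis-exchange axiomatization it starts from, while your main route quietly uses standard closure facts not set up in the paper (existence and rank-preservation of the closure, and $\rank(I\cup\{e\})=\rank(I)\Rightarrow e\in\operatorname{cl}(I)$). Your submodularity fallback is actually the cleanest version and makes the closure discussion unnecessary altogether: if $I\cup\{e\}$ were dependent, then $\rank(H\cup\{e\})+\rank(I)\le\rank(H)+\rank(I\cup\{e\})$ gives $\rank(H\cup\{e\})\le r-1$, contradicting that $H$ is a flat of rank $r-1$ and $e\notin H$; note this bypasses both of your steps, not just the last one. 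Your intermediate observation that every $(r-1)$-subset of a stressed hyperplane has closure equal to that hyperplane is a nice reusable fact that the paper's proof does not isolate.
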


\begin{proof}
Let $h_1,\dots,h_{r-1}$ be elements in $H$ and let $u$ be an element outside $H$. Since $H$ is a stressed hyperplane, rank$(\{h_1,\dots,h_{r-1}\})=r-1$.
Since $H$ is a hyperplane, we have that rank$(H\cup \{u\})=r$.
Therefore, there exists a basis $B\subset H\cup \{u\}$, and since $B$ cannot be a subset of $H$, $u\in B$.

Now, there exists an element in $b\in B\sm \{h_1,\dots,h_{r-1}\}$ such that $\{b,h_1,\dots,h_{r-1}\}$ is a basis of $M$.
Since $\{b,h_1,\dots,h_{r-1}\}$ is not a subset of $H$, we have that $b=u$. This completes the proof.
\end{proof}

\begin{theorem}\label{prop:Ehrhart-relaxation}
Let $M$ be a rank $r$ matroid on ground set~$[n]$ and let $H$ be a stressed hyperplane for $M$ with $|H|=s$.
Then the relaxation $\Rel_H(M)$ has Ehrhart function
\[
\ehr_{\Rel_H(M)}(t)=\ehr_{{M}}(t)+\sum_{m=0}^{t-1}\sum_{i=0}^{s-r}(-1)^i\binom{s}{i}\binom{t(s-r-i)+m+s-1-i}{s-1}\binom{m+n-s-1}{m}.
\]
\end{theorem}

\begin{proof}
Without loss of generality, assume that $H=[s]$.
Abbreviate $\widetilde{M}=\Rel_H(M)$.
Let $V$ be the set of vertices of $\Po_{M}$, and let $V'$ be the set of vertices in $\Po_{\widetilde{M}}$ corresponding to the new bases in $\binom{H}{r}$.
The set of vertices in $V$ that have an edge to a vertex in $V'$ corresponds to the bases of $M$ that have exactly one element not in $H$.
By Lemma \ref{lem:relax-stressed}, each $r$-subset with exactly one element not in $H$ is a basis of $M$.
So, the set of vertices in $V'$ together with the vertices of $V$ that are connected to a vertex in $V'$ make the vertices of the polytope $\Po_{\Pan_{r,s,n}}$.
Therefore, $\Po_{\widetilde{M}}=\Po_{\Pan_{r,s,n}}\cup \Po_{M}$.
The vertices of $\Po_{\Pan_{r,s,n}}\cap \Po_{M}$ correspond to the bases of $\Pan_{r,s,n}$ that have exactly one element outside $[s]$. 
(Alternatively, they are the indicator vectors of the bases for  $U_{r-1,s}\oplus U_{1,n-s}$, the same matroid appearing in Proposition \ref{prop:disconnected_stressed}.)
It follows from Proposition~\ref{Pan-characterization} that $\Po_{\Pan_{r,s,n}}\cap \Po_{M}$ is given by
\[
\left\{x\in [0,1]^n\ :\ \sum_{i=1}^nx_i=r \textrm{ and } \sum_{i=s+1}^n x_i = 1\right\}.
\]
As in the proof of Proposition~\ref{prop:Ehrhart-panhandle}, the Ehrhart function of this polytope is
\[\sum_{i=0}^{s-r}(-1)^i\binom{s}{i}\binom{t(s-r-i)+t+s-1-i}{s-1}\binom{t+n-s-1}{n-s-1}.\]
Now, by inclusion-exclusion,
\begin{align*}
\ehr_{\widetilde{M}}(t)&=\ehr_M(t) + \ehr_{{\Pan_{r,s,n}}}(t) - \sum_{i=0}^{s-r}(-1)^i\binom{s}{i}\binom{t(s-r-i)+t+s-1-i}{s-1}\binom{t+n-s-1}{n-s-1}\\
&=\ehr_M(t) + \sum_{m=0}^{t-1}\sum_{i=0}^{s-r}(-1)^i\binom{s}{i}\binom{t(s-r-i)+m+s-1-i}{s-1}\binom{m+n-s-1}{m}.
\end{align*}

The second equality comes from the following calculation
\begin{align*}
&\ehr_{{\Pan_{r,s,n}}}(t)-\sum_{i=0}^{s-r}(-1)^i\binom{s}{i}\binom{t(s-r-i)+t+s-1-i}{s-1}\binom{t+n-s-1}{n-s-1}\\
&=\sum_{m=0}^t\sum_{i=0}^{s-r}(-1)^i\binom{s}{i}\binom{t(s-r-i)+m+s-1-i}{s-1}\binom{m+n-s-1}{m}\\
&\hspace{2cm}-\sum_{i=0}^{s-r}(-1)^i\binom{s}{i}\binom{t(s-r-i)+t+s-1-i}{s-1}\binom{t+n-s-1}{n-s-1}\\
&=\sum_{m=0}^{t-1}\sum_{i=0}^{s-r}(-1)^i\binom{s}{i}\binom{t(s-r-i)+m+s-1-i}{s-1}\binom{m+n-s-1}{m}.\qedhere
\end{align*}
\end{proof}

As with Proposition~\ref{prop:Ehrhart-panhandle}, the previous formula is not entirely satisfactory because $t$ appears as a limit of summation. The following result gives a formula as a polynomial in $t$.

\begin{theorem} \label{thm:Ehrhart-relaxation-improved}
The Ehrhart polynomial for the relaxation $\Rel_H(M)$ can be alternatively written as
\begin{equation}
\ehr_{\Rel_H(M)}(t)=\ehr_{{M}}(t)+\frac{n-s}{(n-1)!}\binom{t-1+n-s}{n-s}
\tilde{\phi}_{r,s,n}(t)
\end{equation}
where
\begin{equation} \label{tildephi}
\tilde{\phi}_{r,s,n}(t)
=\sum_{i=0}^{s-r}(-1)^i\binom{s}{i}\sum_{\ell=0}^{s-1}(n-2-\ell)!\ell!\binom{s-2-\ell-i+t(s-r-i+1)}{s-1-\ell}\binom{s-1-i+t(s-r-i)}{\ell}.
\end{equation}
\end{theorem}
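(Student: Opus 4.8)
The plan is to start from the formula in Theorem~\ref{thm:Ehrhart-relaxation} and transform the awkward double sum (with $t$ appearing as a summation limit) into the closed form using the same chain of hypergeometric identities that proved Corollaries~\ref{thm:betterPanhandleformula} and~\ref{formula for positivity}. Concretely, the correction term in Theorem~\ref{thm:Ehrhart-relaxation} is
\[
\Delta(t):=\sum_{m=0}^{t-1}\sum_{i=0}^{s-r}(-1)^i\binom{s}{i}\binom{t(s-r-i)+m+s-1-i}{s-1}\binom{m+n-s-1}{m},
\]
which is $\ehr_{\Pan_{r,s,n}}(t)$ minus the $m=t$ term; equivalently, it is the count from the proof of Theorem~\ref{thm:Ehrhart-relaxation} of lattice points in $t\Po_{\Pan_{r,s,n}}$ with $\sum_{i>s}x_i\le t-1$. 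So the cleanest route is to repeat the Theorem~\ref{thm:Ehrhart-panhandle}/Corollary~\ref{thm:betterPanhandleformula} derivation verbatim, but with the outer sum running $m=0,\dots,t-1$ instead of $m=0,\dots,t$: the only place this matters is the hockey-stick step $\sum_{m=0}^{t}\binom{m+n-s-1}{n-s-1+\ell}=\binom{t+n-s}{n-s+\ell}$, which becomes $\sum_{m=0}^{t-1}\binom{m+n-s-1}{n-s-1+\ell}=\binom{t-1+n-s}{n-s+\ell}$. This shift of $t\mapsto t-1$ in exactly that binomial is what turns $\binom{t+n-s}{n-s}$ into $\binom{t-1+n-s}{n-s}$ in the prefactor.

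First I would note that every other step in the Corollary~\ref{thm:betterPanhandleformula} proof (Chu--Vandermonde on the first binomial, the identity $\binom{K}{R}\binom{N}{K}=\binom{N-R}{K-R}\binom{N}{R}$, pulling the $m$-sum inside) is untouched by the change in the outer summation range, so we land at
\[
\Delta(t)=(n-s)\binom{t-1+n-s}{n-s}\sum_{i=0}^{s-r}(-1)^i\binom{s}{i}\sum_{\ell=0}^{s-1}\binom{t(s-r-i)+s-1-i}{s-1-\ell}\binom{t}{\ell}\frac{1}{n-s+\ell}.
\]
Then I would invoke Lemma~\ref{lem:genfunc} (the same application used in Corollary~\ref{formula for positivity}), but now with the substitution $u=t-1$ in place of $u=t$: the inner sum $\sum_\ell \binom{t(s-r-i)+s-1-i}{s-1-\ell}\binom{t}{\ell}\frac{1}{n-s+\ell}$ has the same shape as the one appearing there. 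Tracking how the two arguments $s-1-i+t(s-r-i)$ and $s-1-\ell-i+t(s-r-i+1)$ depend on $u$ versus $t$, the choice $u=t-1$ decreases the "first" argument of the $\binom{\cdot}{s-1-\ell}$ factor by one unit, producing $\binom{s-2-\ell-i+t(s-r-i+1)}{s-1-\ell}$, exactly as in \eqref{tildephi}, while leaving $\binom{s-1-i+t(s-r-i)}{\ell}$ unchanged. Multiplying through by $\frac{1}{(n-1)!}$ assembled from the factorials $(n-2-\ell)!\,\ell!$ yields $\tilde\phi_{r,s,n}(t)$ and hence the stated identity.

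The main obstacle is making sure Lemma~\ref{lem:genfunc} is actually applied with the right parametrization: the lemma as used in Corollary~\ref{formula for positivity} has the free variable $u$ equal to $t$, and here we need it equal to $t-1$ in some factors but the "$t$" in $\binom{t}{\ell}$ and in $t(s-r-i)$ must remain $t$ — so one has to double-check that Lemma~\ref{lem:genfunc} really treats those occurrences as independent parameters rather than all being the same $u$. If the statement of Lemma~\ref{lem:genfunc} only covers the fully-coupled case $u=t$, then I would instead prove the identity directly: express $\Delta(t)=\ehr_{\Pan_{r,s,n}}(t)-(\text{the }m=t\text{ summand})$, apply Corollary~\ref{formula for positivity} to the first piece and Corollary~\ref{thm:betterPanhandleformula} (or the explicit $m=t$ term) to the second, and reconcile the two $\phi$-type expressions using the Pascal recursion $\binom{t+n-s}{n-s}=\binom{t-1+n-s}{n-s}+\binom{t-1+n-s}{n-s-1}$ together with the polynomial identity $\binom{s-1-\ell-i+t(s-r-i+1)}{s-1-\ell}-\binom{s-2-\ell-i+t(s-r-i+1)}{s-1-\ell}=\binom{s-2-\ell-i+t(s-r-i+1)}{s-2-\ell}$; the bookkeeping is routine but one must be careful that the resulting telescoping matches \eqref{tildephi} term by term. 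Either way, no new idea beyond the identities already deployed in Section~\ref{sec:ehrhart} is required.
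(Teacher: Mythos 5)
Your overall route is exactly the paper's: rerun the Corollary~\ref{thm:betterPanhandleformula}/\ref{formula for positivity} manipulations on the correction term of Theorem~\ref{thm:Ehrhart-relaxation} with the outer sum truncated at $m=t-1$, and finish with Lemma~\ref{lem:genfunc} at $u=t-1$. However, your displayed intermediate formula for $\Delta(t)$ is not correct as written, and the error is precisely at the point you flag as the ``main obstacle.'' The truncated hockey-stick step gives $\sum_{m=0}^{t-1}\binom{m+n-s-1}{n-s-1+\ell}=\binom{t-1+n-s}{n-s+\ell}$, and the subsequent rewriting
\[
\binom{n-s-1+\ell}{\ell}\binom{T+n-s}{n-s+\ell}=\binom{T+n-s}{n-s}\binom{T}{\ell}\frac{n-s}{n-s+\ell}
\]
must now be applied with $T=t-1$, so the shift propagates into \emph{both} factors: the correct intermediate expression is
\[
\Delta(t)=(n-s)\binom{t-1+n-s}{n-s}\sum_{i=0}^{s-r}(-1)^i\binom{s}{i}\sum_{\ell=0}^{s-1}\binom{t(s-r-i)+s-1-i}{s-1-\ell}\binom{t-1}{\ell}\frac{1}{n-s+\ell},
\]
with $\binom{t-1}{\ell}$, not $\binom{t}{\ell}$ as you wrote. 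Your subsequent claim that ``the $t$ in $\binom{t}{\ell}$ must remain $t$'' has it backwards: it is exactly the top argument of that binomial that becomes $t-1$ and plays the role of the parameter $u$ in Lemma~\ref{lem:genfunc}, while the $t$ in $t(s-r-i)$ stays put. Since the lemma is stated with $u$ a nonnegative integer independent of the indeterminate $t$, applying it with $u=t-1$ is perfectly licensed once the intermediate formula is corrected, and it produces $\binom{s-2-\ell-i+t(s-r-i+1)}{s-1-\ell}$ and $\binom{s-1-i+t(s-r-i)}{\ell}$, i.e.\ $\tilde\phi_{r,s,n}(t)$.

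As written, your chain does not close: applying Lemma~\ref{lem:genfunc} with $u=t-1$ to an expression containing $\binom{t}{\ell}$ is not justified, and applying it with $u=t$ would return $\phi_{r,s,n}(t)$ rather than $\tilde\phi_{r,s,n}(t)$, which is incompatible with the prefactor $\binom{t-1+n-s}{n-s}$. The fix is the one-line correction above, after which your argument coincides with the paper's proof; the fallback you sketch (splitting off the $m=t$ summand and reconciling via Pascal's rule) is workable but unnecessary.
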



\begin{proof}
Here, we assume that $t$ is a positive integer chosen to be large enough so that $s-2-(s-1)-(s-r)+t$ is nonnegative. This ensures that the upper term in each binomial coefficient, especially in those of the form $\binom{s-2-\ell-i+t(s-r-i+1)}{s-1-\ell}$, is nonnegative to avoid confusion between the two possible interpretations of binomial coefficients.  

We rewrite the formula of Proposition~\ref{prop:Ehrhart-relaxation}, and then perform calculations similar to Corollaries~\ref{thm:betterPanhandleformula} and~\ref{formula for positivity}:
\begin{align*}
    \ehr_{\Rel_H(M)}(t)-\ehr_M(t)&=\sum_{m=0}^{t-1}\sum_{i=0}^{s-r}(-1)^i\binom{s}{i}\binom{t(s-r-i)+m+s-1-i}{s-1}\binom{m+n-s-1}{m}\\
    &=\sum_{m=0}^{t-1}\sum_{i=0}^{s-r}(-1)^i\binom{s}{i}\sum_{\ell=0}^{s-1}\binom{t(s-r-i)+s-1-i}{s-1-\ell}\binom{m}{m-\ell}\binom{m+n-s-1}{m}
\intertext{(using the Chu-Vandermonde identity \cite[p.~169]{concretemath})}
   &=\sum_{m=0}^{t-1}\sum_{i=0}^{s-r}(-1)^i\binom{s}{i}\sum_{\ell=0}^{s-1}\binom{t(s-r-i)+s-1-i}{s-1-\ell}\binom{n-s-1+\ell}{\ell}\binom{m+n-s-1}{m-\ell}
\intertext{(using the identity $\binom{K}{R}\binom{N}{K}=\binom{N-R}{K-R}\binom{N}{R}$)}
    &=\sum_{m=0}^{t-1}\sum_{i=0}^{s-r}(-1)^i\binom{s}{i}\sum_{\ell=0}^{s-1}\binom{t(s-r-i)+s-1-i}{s-1-\ell}\binom{n-s-1+\ell}{\ell}\binom{m+n-s-1}{n-s-1+\ell}\\
    &=\sum_{i=0}^{s-r}(-1)^i\binom{s}{i}\sum_{\ell=0}^{s-1}\binom{t(s-r-i)+s-1-i}{s-1-\ell}\binom{n-s-1+\ell}{\ell}\sum_{m=0}^{t-1}\binom{m+n-s-1}{n-s-1+\ell}\\
    &=\sum_{i=0}^{s-r}(-1)^i\binom{s}{i}\sum_{\ell=0}^{s-1}\binom{t(s-r-i)+s-1-i}{s-1-\ell}\binom{n-s-1+\ell}{\ell}\binom{t-1+n-s}{n-s+\ell}
\intertext{(using the ``hockey-stick identity'' $\sum_{I=R}^N \binom{I}{R} = \binom{N+1}{R+1}$ \cite[p.~160]{concretemath})}
    &=\sum_{i=0}^{s-r}(-1)^i\binom{s}{i}\sum_{\ell=0}^{s-1}\binom{t(s-r-i)+s-1-i}{s-1-\ell}\binom{t-1+n-s}{n-s}\binom{t-1}{\ell}\frac{n-s}{n-s+\ell}\\
    &=(n-s)\binom{t-1+n-s}{n-s}\sum_{i=0}^{s-r}(-1)^i\binom{s}{i}\sum_{\ell=0}^{s-1}\binom{t(s-r-i)+s-1-i}{s-1-\ell}\binom{t-1}{\ell}\frac{1}{n-s+\ell}.
\end{align*}
It suffices to show that
\begin{align*}
&\sum_{\ell=0}^{s-1}\binom{t(s-r-i)+s-1-i}{s-1-\ell}\binom{t-1}{\ell}\frac{1}{n-s+\ell}\\
=&\frac{1}{(n-1)!}\sum_{\ell=0}^{s-1}(n-2-\ell)!\ell!\binom{s-2-\ell-i+t(s-r-i+1)}{s-1-\ell}\binom{s-1-i+t(s-r-i)}{\ell}.
\end{align*}
This follows from Lemma~\ref{lem:genfunc} with $u=t-1$.
\end{proof}

\subsection{Ehrhart polynomials of paving matroids}
We now apply Proposition~\ref{prop:Ehrhart-relaxation} to paving matroids.
By~\cite[Prop.~3.9]{relaxation}, if $H_1,H_2$ are distinct stressed hyperplanes, then $H_2$ is a stressed hyperplane in $\Rel_{H_1}(M)$.
Hence by Proposition \ref{prop:stressed_hyps_paving}, every paving matroid can be relaxed to a uniform matroid, and we can iterate Proposition~\ref{prop:Ehrhart-relaxation}, we obtain the following formula.

\begin{proposition} \label{prop:ehrhart-paving}
Let $M$ be a rank $r$ paving matroid whose set of (stressed) hyperplanes is $\HH$, and for $r\leq s\leq n-1$, let $\HH_s=\{H\in\HH:\ |H|=s\}$.
Then
\[
\ehr_{M}(t)=\ehr_{U_{r,n}}(t)-\sum_{s=r}^{n-1}|\HH_s|\sum_{m=0}^{t-1}\sum_{i=0}^{s-r}(-1)^i\binom{s}{i}\binom{t(s-r-i)+m+s-1-i}{s-1}\binom{m+n-s-1}{m}.\qedhere
\]
\end{proposition}

\begin{theorem}\label{thm:Ehrhart-paving-improved-formula}
Let $M$ be a rank $r$ paving matroid whose set of (stressed) hyperplanes is $\HH$, and for $r\leq s\leq n-1$, let $\HH_s=\{H\in\HH:\ |H|=s\}$.  Then
\begin{align*}
&\ehr_{M}(t)=\ehr_{U_{r,n}}(t)-\sum_{s=r}^{n-1}|\HH_s|\frac{n-s}{(n-1)!}\binom{t-1+n-s}{n-s} \tilde\phi_{r,s,n}(t)
\end{align*}
where $\tilde\phi_{r,s,n}(t)$ is defined as in~\eqref{tildephi}.
\end{theorem}

\begin{remark} \label{rmk:sparse}
A matroid $M$ is \defterm{sparse paving} if it and its dual are both paving: equivalently, every subset of cardinality $r=\rank(M)$ is either a basis or a circuit-hyperplane.
In particular, $M$ is a paving matroid with no hyperplanes of size greater than~$r$.
Therefore, Theorem~\ref{thm:Ehrhart-paving-improved-formula} tells us that
\[
\ehr_{M}(t)=\ehr_{U_{r,n}}(t)-|\HH_r|\frac{1}{(n-1)!}\binom{t-1+n-r}{n-r}\sum_{\ell=0}^{r-1}(n-2-\ell)!\ell!\binom{t+r-2-\ell}{r-1-\ell}\binom{r-1}{\ell}.
\]
As in Remark \ref{rmk:GeneralizeMinimal}, we obtain
\begin{align*}
&\frac{1}{(n-1)!}\binom{t-1+n-r}{n-r}\sum_{\ell=0}^{r-1}(n-2-\ell)!\ell!\binom{t+r-2-\ell}{r-1-\ell}\binom{r-1}{\ell}\\
=&\frac{1}{\binom{n-1}{r-1}}\binom{t-1+n-r}{n-r}\sum_{\ell=0}^{r-1}\binom{n-r-1+\ell}{\ell}\binom{\ell+t-1}{\ell}\\
=&\ehr_{\Pan_{r,r,n}}(t-1).
\end{align*}
recovering~\cite[Corollary~4.6]{FerroniNotPositive}, i.e., the Ehrhart polynomial of a sparse paving matroid.
\end{remark}


\section{Progress toward Ehrhart positivity for panhandle matroids} \label{sec:ehrhart-panhandle}

In this section we describe our progress toward the following conjectures, both of which are supported by substantial computational evidence.

\begin{conjecture} \label{PanhandlePositive}
Panhandle matroids are Ehrhart positive.
\end{conjecture}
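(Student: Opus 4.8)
The plan is to reduce Conjecture~\ref{PanhandlePositive} to a positivity statement about the polynomial $\phi_{r,s,n}(t)$ defined in~\eqref{phisrlt}, and then attack that statement combinatorially. By Corollary~\ref{formula for positivity} we have
\[
\ehr_{\Pan_{r,s,n}}(t)=\frac{n-s}{(n-1)!}\binom{t+n-s}{n-s}\,\phi_{r,s,n}(t),
\]
and the prefactor $\frac{n-s}{(n-1)!}\binom{t+n-s}{n-s}=\frac{1}{(n-1)!}(t+n-s)(t+n-s-1)\cdots(t+1)$ is a product of linear factors with positive coefficients. Hence the first step is the observation that it suffices to prove that $\phi_{r,s,n}(t)$ has nonnegative coefficients as a polynomial in $t$; the product of two polynomials with nonnegative coefficients again has nonnegative coefficients. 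This isolates the entire difficulty in a single polynomial of degree roughly $s-1$ in $t$.

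The second step is to expand $\phi_{r,s,n}(t)$ in a basis adapted to the structure of~\eqref{phisrlt}. Each summand is a product of two binomial coefficients $\binom{s-1-\ell-i+t(s-r-i+1)}{s-1-\ell}$ and $\binom{s-1-i+t(s-r-i)}{\ell}$, each of which is a polynomial in $t$; I would write these out via the classical expansion of $\binom{at+b}{k}$ into Stirling-type pieces, or better, interpret the coefficients directly. Following Ferroni's approach in~\cite{FerroniHypersimplices} for the hypersimplex, the natural target is to show that, after collecting terms, the coefficient of $t^k$ in $\phi_{r,s,n}(t)$ is a sum of Eulerian numbers times \emph{weighted Lah numbers} (or a refinement thereof), all of which are manifestly nonnegative. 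This is exactly the content of Conjecture~\ref{big conjecture} referenced in the introduction: positivity of $\phi_{r,s,n}(t)$ is equivalent to nonnegativity of a family of refined weighted Lah numbers, and the hoped-for combinatorial proof would exhibit those numbers as counting some explicit set of chain-gang configurations.

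The key steps, in order, are therefore: (1) peel off the positive binomial prefactor using Corollary~\ref{formula for positivity}; (2) reindex the alternating sum over $i$ so that the sign $(-1)^i\binom{s}{i}$ is absorbed — the natural device here is an inclusion–exclusion that reinterprets the alternating sum as a count of something, mirroring the passage from Lemma~\ref{lemma:intsolutions} to a genuinely nonnegative quantity; (3) expand the binomial products in $t$ and match coefficients against Eulerian numbers times weighted Lah numbers, using the combinatorial identity for weighted Lah numbers proved in Section~\ref{sec:weighted Lah}; (4) conclude nonnegativity coefficient by coefficient. For the special case $r=s$ (minimal matroids), steps (2)–(3) collapse and positivity follows from the already-known formula of~\cite{Ferroni2021A} recalled in Remark~\ref{rmk:GeneralizeMinimal}, so the argument should at least recover that base case cleanly.

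The hard part will be step (2)–(3): controlling the alternating sum over $i$. Unlike the minimal-matroid case, the presence of the factor $\binom{s}{i}$ with the two binomials depending on $i$ through \emph{both} the top and the denominator-free numerator terms means that naive termwise bounds fail — individual summands are not positive, and the cancellation is essential. A combinatorial model (the chain gangs of Conjecture~\ref{big conjecture}) that makes the alternating sum a signed count with a sign-reversing involution on the ``bad'' terms is, I expect, the only route to a clean proof, and constructing that involution — or equivalently proving the refined weighted Lah numbers are nonnegative — is the genuine obstacle. Absent that, one can still record the reduction of Conjecture~\ref{PanhandlePositive} to the purely polynomial statement ``$\phi_{r,s,n}(t)\succeq 0$'' together with the extensive computational verification for small $r,s,n$, which is the honest status of the conjecture as presented here.
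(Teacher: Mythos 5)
Your proposal does not prove the statement, and in fact it cannot be judged against a proof in the paper because the paper offers none: Conjecture~\ref{PanhandlePositive} is left open there. What you describe is essentially the paper's own reduction strategy from Section~\ref{sec:ehrhart-panhandle}: peel off the positive factor $\frac{n-s}{(n-1)!}\binom{t+n-s}{n-s}$ via Corollary~\ref{formula for positivity}, reduce to nonnegativity of the coefficients of $\phi_{r,s,n}(t)$, and then try to realize those coefficients combinatorially. The paper pushes this further than you do --- it interchanges the sums to reduce to the polynomials $\psi_{s,r,\ell}(t)$ of~\eqref{psisrlt}, expands them via the numbers $\elm{n}{a,b}$ (Proposition~\ref{phicalc}), factors the resulting quantities through generating functions and Worpitzky's identity (Proposition~\ref{prop:Gprop}), and arrives at the explicit quantities $\xi(q,s,k,\ell,m)$ whose conjectured chain-gang interpretation is Conjecture~\ref{big conjecture} --- but it stops exactly where you stop: the positivity of these alternating sums is verified only computationally (for $s\leq 7$), not proven.

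The genuine gap is therefore your steps (2)--(3), and you correctly identify them as such: controlling the alternating sum over $i$ with the factor $(-1)^i\binom{s}{i}$, i.e., proving Conjecture~\ref{big conjecture} (or otherwise establishing nonnegativity of the coefficients of $\phi_{r,s,n}(t)$), is precisely the open problem, and no inclusion--exclusion or sign-reversing involution accomplishing this is exhibited either by you or by the paper; the combinatorial proof in Section~\ref{sec:weighted Lah} handles only the unrefined weighted Lah numbers and is offered there as a hoped-for template, not a resolution. One small correction: your reduction chain gives implications, not equivalences --- positivity of $\phi_{r,s,n}(t)$ would \emph{follow from} positivity of each $\psi_{s,r,\ell}(t)$, and that in turn from Conjecture~\ref{big conjecture}, but the converse directions are not claimed. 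Your closing sentence states the honest status accurately: what you have is a reduction plus computational evidence, which is what the paper has, and not a proof.
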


\begin{conjecture} \label{RelaxationPositive}
If $M$ is Ehrhart positive and $H$ is a stressed hyperplane in $M$, then $\Rel_H(M)$ is also Ehrhart positive.
\end{conjecture}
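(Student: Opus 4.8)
The plan is to reduce Conjecture~\ref{RelaxationPositive} to coefficient-positivity of one explicit polynomial. Let $M$ have rank $r$ and ground set $[n]$, and let $H$ be a stressed hyperplane with $|H|=s$. Theorem~\ref{thm:Ehrhart-relaxation-improved} gives
\[
\ehr_{\Rel_H(M)}(t)=\ehr_M(t)+\frac{n-s}{(n-1)!}\binom{t-1+n-s}{n-s}\,\tilde\phi_{r,s,n}(t),
\]
so, since $\ehr_M(t)$ is assumed to have nonnegative coefficients, it suffices to show the correction term does as well. The prefactor is harmless: $\binom{t-1+n-s}{n-s}=\frac{1}{(n-s)!}\,t(t+1)\cdots(t+n-s-1)$ is a product of linear forms with nonnegative coefficients (note $n-s\ge 1$ always), and $\frac{n-s}{(n-1)!}>0$. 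So the whole conjecture reduces to showing that $\tilde\phi_{r,s,n}(t)$, defined in~\eqref{tildephi}, has nonnegative coefficients for all $r\le s<n$ --- this is the statement I would aim to prove. The reduction is genuinely necessary: by the inclusion--exclusion in the proof of Theorem~\ref{thm:Ehrhart-relaxation}, the correction term equals $\ehr_{\Pan_{r,s,n}}(t)-\ehr_{U_{r-1,s}\oplus U_{1,n-s}}(t)$, and even if both $\ehr_{\Pan_{r,s,n}}(t)$ (Conjecture~\ref{PanhandlePositive}) and the subtracted term were known to be Ehrhart positive, a difference of Ehrhart-positive polynomials need not be Ehrhart positive, so Conjecture~\ref{RelaxationPositive} does not follow from Conjecture~\ref{PanhandlePositive} by any soft argument.

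I would dispose of the case $s=r$ first, where the result is already known: by Remark~\ref{rmk:sparse}, $\frac{n-r}{(n-1)!}\binom{t-1+n-r}{n-r}\tilde\phi_{r,r,n}(t)=\ehr_{\Pan_{r,r,n}}(t-1)$, whose coefficients are nonnegative by Ferroni~\cite{Ferroni2021A} --- this is the known fact, due to Ferroni, that circuit-hyperplane relaxation preserves Ehrhart positivity. The real content lies in the range $s>r$, and there I would attack $\tilde\phi_{r,s,n}(t)$ in parallel with the polynomial $\phi_{r,s,n}(t)$ of~\eqref{phisrlt} from Conjecture~\ref{PanhandlePositive}: the two differ only in the upper index of a single binomial coefficient, so one combinatorial model ought to handle both. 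The route I would take is to rewrite the coefficient of each power of $t$ in these polynomials as a cancellation-free sum over Eulerian numbers and a refinement of Ferroni's weighted Lah numbers~\cite{FerroniHypersimplices} --- a ``chain gang'' statistic --- which is exactly the identity isolated as Conjecture~\ref{big conjecture} and which is supported by extensive computation.

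The hard part will be this last step. Both $\tilde\phi_{r,s,n}(t)$ and $\phi_{r,s,n}(t)$ are alternating sums over $i$ with weights $(-1)^i\binom{s}{i}$, so they are not positive term by term and substantial cancellation occurs; one genuinely needs a sign-reversing involution or a direct combinatorial interpretation of the coefficients after cancellation. As a template I would use Section~\ref{sec:weighted Lah}, which gives a purely combinatorial proof of Ferroni's (originally generating-function) formula for the weighted Lah numbers, and try to push that bookkeeping through to Conjecture~\ref{big conjecture}. A more geometric alternative --- expanding the Ehrhart coefficients of $\Po_{\Pan_{r,s,n}}$ using Berline--Vergne local weights on its faces, each a product of polytopes of matroid minors, and showing those weights are nonnegative --- would also give both conjectures, but the Berline--Vergne weights are poorly understood outside low dimension, so I expect the combinatorial approach to be the more realistic of the two.
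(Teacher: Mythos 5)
Your proposal is not a proof: everything after the first paragraph is a research plan, and the one step that carries all the content --- showing that $\tilde\phi_{r,s,n}(t)$ defined in~\eqref{tildephi} has nonnegative coefficients --- is never established. The statement you are addressing is stated in the paper as a conjecture precisely because this step is open; the reduction you give (apply Theorem~\ref{thm:Ehrhart-relaxation-improved}, note that $\frac{n-s}{(n-1)!}\binom{t-1+n-s}{n-s}$ has nonnegative coefficients since $s<n$, and add the Ehrhart-positive $\ehr_M(t)$) is exactly the reduction the paper itself performs in Section~\ref{sec:ehrhart-panhandle}, and your disposal of the case $s=r$ via Remark~\ref{rmk:sparse} and Ferroni's formula for the minimal matroid is likewise already in the paper. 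So you have correctly reproduced the known part and stopped at the same wall the authors stop at.

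Two further cautions about the plan itself. First, your hope that ``one combinatorial model ought to handle both'' $\phi_{r,s,n}$ and $\tilde\phi_{r,s,n}$ goes beyond what is currently available: the chain-gang interpretation of Conjecture~\ref{big conjecture} is formulated only for $\xi(q,s,k,\ell,m)$ (the $\phi$ side, i.e.\ Conjecture~\ref{PanhandlePositive}), and the paper explicitly states that no analogous interpretation of $\tilde\xi(q,s,k,\ell,m)$ has been found; even granting Conjecture~\ref{big conjecture}, Conjecture~\ref{RelaxationPositive} would not follow. Second, note that Conjecture~\ref{big conjecture} is itself only verified computationally (for $s\le 7$), so invoking it, or an analogue, does not reduce your task to anything proved. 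Your observation that the correction term is the difference $\ehr_{\Pan_{r,s,n}}(t)-\ehr_{U_{r-1,s}\oplus U_{1,n-s}}(t)$, and that positivity therefore cannot be inherited softly from Conjecture~\ref{PanhandlePositive}, is correct and worth keeping; but as it stands your submission is a (faithful) restatement of the paper's reduction together with a program, not a proof of the conjecture.
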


\begin{remark}
A matroid is a \defterm{positroid} \cite{KLS} if it can be represented by a real matrix whose maximal minors are all nonnegative.
Ferroni, Jochemko and Schr\"oter \cite{FerroniJochemkSchroter} conjectured that positroids are Ehrhart positive.
Our conjecture \ref{PanhandlePositive} is a special case of this conjecture, because lattice path matroids are positroids \cite[Lemma~23]{OhPositroids}.
\end{remark}

Recall the formula for $\ehr_{\Pan_{r,s,n}}(t)$ given in Corollary \ref{formula for positivity}.
Since $\binom{t+n-s}{n-s}$ is a polynomial in $t$ with positive coefficients, in order to prove Conjecture~\ref{PanhandlePositive}, it suffices to show that the polynomial $\phi_{r,s,n}(t)$ defined in~\eqref{phisrlt} has positive coefficients.
Analogously, in light of Theorem~\ref{thm:Ehrhart-relaxation-improved} and the observation that $\binom{t-1+n-s}{n-s}=t(t+1)\cdots(t-1+n-s)$ has nonnegative coefficients, we see that Conjecture~\ref{RelaxationPositive} will follow if the polynomial $\tilde\phi_{r,s,n}(t)$ defined in~\eqref{tildephi} can be shown to have positive coefficients.
Accordingly, we focus on the polynomials $\phi_{r,s,n}(t)$ and $\tilde\phi_{r,s,n}(t)$.

Interchanging the sums in the definitions of $\phi$ and $\tilde\phi$, we see that it is enough to show that
\begin{equation} \label{psisrlt}
\begin{aligned}
\psi_{s,r,\ell}(t) &=\sum_{i=0}^{s-r}(-1)^i\binom{s}{i}\binom{s-1-\ell-i+t(s-r-i+1)}{s-1-\ell}\binom{s-1-i+t(s-r-i)}{\ell},\\
\tilde{\psi}_{s,r,\ell}(t) &= \sum_{i=0}^{s-r}(-1)^i\binom{s}{i}\binom{s-2-\ell-i+t(s-r-i+1)}{s-1-\ell}\binom{s-1-i+t(s-r-i)}{\ell}
\end{aligned}
\end{equation}
have positive coefficients in $t$ for each $0\leq \ell\leq s-1$.


Note that the polynomial $\psi_{s,r,\ell}(t)$ is independent of $n$, implying the following: 

\begin{proposition}\label{prop:PanPosSmall s}
If $\psi_{s,r,\ell}(t)$ has positive coefficients for each $0\leq \ell\leq s-1$, then the polynomial $\ehr_{\Pan_{r,s,n}}(t)$ has positive coefficients for all $n\geq s+1$.
\end{proposition}

Therefore, the verification that finitely many polynomials have positive coefficients implies that an infinite number of panhandle matroids are Ehrhart positive.
We have verified using Sage that for all $1\leq r\leq s\leq 40$ and $0\leq \ell\leq s-1$, the polynomial $\psi_{s,r,\ell}(t)$ has positive coefficients. Thus, Proposition \ref{prop:PanPosSmall s} implies that for all $1\leq r\leq s\leq 40$, the panhandle matroid $\Pan_{r,s,n}$ is Ehrhart positive for all $n\geq s+1$.


We next describe our progress toward showing that $\psi_{s,r,\ell}(t)$ has positive coefficients.
Define
\[
\elm{n}{a,b}=\sum_{a\leq i_1<\dots<i_n\leq b} i_1\cdots i_n.
\]
(This quantity is notated $P^n_{a,b}$ rather than $\elm{n}{a,b}$ in \cite{FerroniHypersimplices}.)
The numbers $\elm{n}{a,b}$ generalize the unsigned Stirling numbers of the first kind $\stir{n}{k}$ (the number of permutations of $[n]$ with $k$ cycles).
They satisfy the following identities (among others), whch we will use freely:
\begin{align}
\elm{0}{a,b} &=1 &&\text{for all $a,b$}; \label{elm-one}\\
\elm{n}{a,b} &=0 &&\text{for $n<0$ or $n>b-a+1$};\label{elm-zero}\\
\elm{n}{1,b} &= \stir{b+1}{b+1-n};\label{elm-stirling}\\
\elm{n}{-a,b}&=\sum_{k=0}^n\elm{k}{-a,-1}\elm{n-k}{1,b}=\sum_{k=0}^n(-1)^k\elm{k}{1,a}\elm{n-k}{1,b} &&\text{for $-1<0<b$}\label{elm-sign}
\end{align}
Identity~\eqref{elm-stirling} follows from the standard generating function for first-Stirling numbers \cite[Proposition~1.3.7]{Stanley}, and~\eqref{elm-sign} results from classifying the summands in $\elm{n}{-a,b}$ by the number of negative factors.

\begin{proposition}\label{phicalc}
Let $\psi_{s,r,\ell}(t)$ and $\tilde\psi_{s,r,\ell}(t)$ be defined as in~\eqref{psisrlt}.
Then
\begin{align*}
&\psi_{s,r,\ell}(t)\\
&=\frac{1}{(s-1-\ell)!\ell!}\sum_{k=0}^{s-1}t^k\sum_{m=0}^k\sum_{i=0}^{s-r}(-1)^i\binom{s}{i}(s-r-i+1)^m(s-r-i)^{k-m} \elm{s-1-\ell-m}{-i+1,s-1-\ell-i} \elm{\ell-k+m}{s-\ell-i,s-1-i}
\intertext{and}
&\tilde{\psi}_{s,r,\ell}(t)\\
&=\frac{1}{(s-1-\ell)!\ell!}\sum_{k=0}^{s-1}t^k\sum_{m=0}^k\sum_{i=0}^{s-r}(-1)^i\binom{s}{i}(s-r-i+1)^m(s-r-i)^{k-m} \elm{s-1-\ell-m}{-i,s-2-\ell-i} \elm{\ell-k+m}{s-\ell-i,s-1-i}.
\end{align*}
\end{proposition}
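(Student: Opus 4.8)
The plan is to expand each binomial coefficient appearing in~\eqref{psisrlt} as a polynomial in $t$ using the definition of $\elm{n}{a,b}$, collect powers of $t$, and match the result with the claimed formula. Concretely, the key observation is that for fixed $i$, a product of the form $\prod_{j}(c_j + ut)$ is the generating polynomial whose coefficient of $u^k t^k$ is the elementary-symmetric-type sum $\elm{k}{\cdot,\cdot}$ over the relevant range of the constants $c_j$. First I would write
\[
\binom{s-1-\ell-i+t(s-r-i+1)}{s-1-\ell}=\frac{1}{(s-1-\ell)!}\prod_{j=1}^{s-1-\ell}\big(t(s-r-i+1)+(s-1-\ell-i+1-j)\big),
\]
so that as $j$ runs over $1,\dots,s-1-\ell$, the constant term $s-1-\ell-i+1-j = s-\ell-i-j$ runs over the integers from $-i+1$ up to $s-1-\ell-i$. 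Hence expanding this product and extracting the coefficient of $t^m$ gives $(s-r-i+1)^m$ times $\elm{s-1-\ell-m}{-i+1,\,s-1-\ell-i}$ (using that the degree-$(s-1-\ell)$ polynomial contributes $\elm{(s-1-\ell)-m}{\cdot}$ in degree $m$, since picking the constant term in $(s-1-\ell)-m$ of the factors). Similarly,
\[
\binom{s-1-i+t(s-r-i)}{\ell}=\frac{1}{\ell!}\prod_{j=1}^{\ell}\big(t(s-r-i)+(s-1-i+1-j)\big),
\]
where the constants $s-i-j$ range over $s-\ell-i,\dots,s-1-i$, so extracting the coefficient of $t^{k-m}$ yields $(s-r-i)^{k-m}\,\elm{\ell-(k-m)}{s-\ell-i,\,s-1-i}$.

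The next step is purely bookkeeping: multiply the two expansions, so that the coefficient of $t^k$ in the $i$-th summand of $\psi_{s,r,\ell}(t)$ becomes
\[
\frac{1}{(s-1-\ell)!\ell!}\sum_{m=0}^{k}(s-r-i+1)^m(s-r-i)^{k-m}\,\elm{s-1-\ell-m}{-i+1,s-1-\ell-i}\,\elm{\ell-k+m}{s-\ell-i,s-1-i},
\]
and then sum over $i$ with the signs $(-1)^i\binom{s}{i}$ and over $k$ from $0$ to $s-1$. Since $\psi_{s,r,\ell}(t)$ has degree at most $(s-1-\ell)+\ell=s-1$, no powers of $t$ above $s-1$ appear, so the range of $k$ is correct. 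This reproduces the first displayed formula exactly. The computation for $\tilde\psi_{s,r,\ell}(t)$ is identical except that the numerator of the first binomial is $s-2-\ell-i+t(s-r-i+1)$, so the constant terms in the corresponding product run over $-i,\dots,s-2-\ell-i$ instead of $-i+1,\dots,s-1-\ell-i$; this shifts the first $\elm{}{}$ to $\elm{s-1-\ell-m}{-i,\,s-2-\ell-i}$ while leaving the second binomial (hence the second $\elm{}{}$) unchanged, giving the second displayed formula.

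The only genuinely delicate point — and the step I would be most careful about — is getting the endpoints of the ranges in the $\elm{n}{a,b}$ symbols exactly right, together with the correct index $n=(s-1-\ell)-m$ (respectively $n=\ell-(k-m)$) on the symmetric function: one must track that expanding a product of $d$ linear factors $\prod(t\alpha+c_j)$ and taking the coefficient of $t^m$ selects the constant term from exactly $d-m$ of the factors, producing $\alpha^m$ times $e_{d-m}$ of the $c_j$'s, i.e.\ $\elm{d-m}{\min_j c_j,\max_j c_j}$. Degenerate cases ($\ell=0$, or $\ell=s-1$, or $s-r-i=0$ in which case $0^0=1$ is intended) should be checked to confirm they are consistent with the conventions~\eqref{elm-one}--\eqref{elm-zero}; in particular when $i=s-r$ the factor $(s-r-i)^{k-m}$ forces $k=m$, and $\elm{\ell-k+m}{\cdot}=\elm{\ell}{s-\ell-(s-r),s-1-(s-r)}=\elm{\ell}{r-\ell,r-1}$, consistent with the binomial $\binom{r-1}{\ell}$. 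No deeper identity is needed — the proposition is an identity between two explicit polynomial expansions, and the proof is a direct term-by-term comparison once the $\elm{}{}$ notation is unwound.
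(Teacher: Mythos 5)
Your proposal is correct and follows essentially the same route as the paper's proof: write each binomial coefficient as a product of linear factors in $t$ whose constant terms are consecutive integers, read off the coefficient of $t^m$ as a power of the slope times an $\elm{}{}$ symbol, and then take the Cauchy product and interchange the sums. Your endpoint bookkeeping (ranges $[-i+1,\,s-1-\ell-i]$, $[s-\ell-i,\,s-1-i]$, and the shifted $[-i,\,s-2-\ell-i]$ for $\tilde\psi$) and the degenerate-case checks all match the paper's calculation.
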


\begin{proof}
We prove just the first assertion; the second proof is analogous.  The statement follows from the following calculation: 
\begin{align*}
&(s-1-\ell)!\ell!\psi_{s,r,\ell}(t)\\
&=\sum_{i=0}^{s-r}(-1)^i\binom{s}{i}\prod_{j=0}^{s-2-\ell}(t(s-r-i+1)+s-1-\ell-i-j)\prod_{j=0}^{\ell-1}(t(s-r-i)+s-1-i-j)\\
&=\sum_{i=0}^{s-r}(-1)^i\binom{s}{i}\left(\sum_{k=0}^{s-1-\ell}t^k(s-r-i+1)^k \elm{s-1-\ell-k}{-i+1,s-1-\ell-i}\right)\left(\sum_{k=0}^\ell t^k(s-r-i)^k \elm{\ell-k}{s-\ell-i,s-1-i}\right)\label{Psi:1}\\
&=\sum_{i=0}^{s-r}(-1)^i\binom{s}{i}\sum_{k=0}^{s-1}t^k\sum_{m=0}^k(s-r-i+1)^m \elm{s-1-\ell-m}{-i+1,s-1-\ell-i}(s-r-i)^{k-m} \elm{\ell-k+m}{s-\ell-i,s-1-i}\\
&=\sum_{k=0}^{s-1}t^k\sum_{m=0}^k\sum_{i=0}^{s-r}(-1)^i\binom{s}{i}(s-r-i+1)^m(s-r-i)^{k-m} \elm{s-1-\ell-m}{-i+1,s-1-\ell-i} \elm{\ell-k+m}{s-\ell-i,s-1-i}. \qedhere
\end{align*}
\end{proof}

\begin{corollary}
Let $s$ be a positive integer, and $0\leq r\leq s$, and $n\geq s-1$.

(1) Suppose that
\[
\zeta(r,s,k,\ell,m):=\sum_{i=0}^{s-r}(-1)^i\binom{s}{i}(s-r-i+1)^m(s-r-i)^{k-m} \elm{s-1-\ell-m}{-i+1,s-1-\ell-i} \elm{\ell-k+m}{s-\ell-i,s-1-i}
\]
is positive for all $0\leq \ell\leq s-1$, $0\leq k\leq s-1$, and $0\leq m\leq k$.
Then Conjecture~\ref{PanhandlePositive} holds.

(2) Suppose that
\[
\tilde\zeta(r,s,k,\ell,m):=\sum_{i=0}^{s-r}(-1)^i\binom{s}{i}(s-r-i+1)^m(s-r-i)^{k-m} \elm{s-1-\ell-m}{-i,s-2-\ell-i} \elm{\ell-k+m}{s-\ell-i,s-1-i}
\]
is positive for all $0\leq \ell\leq s-1$, $0\leq k\leq s-1$, and $0\leq m\leq k$.
Then Conjecture~\ref{RelaxationPositive} holds.
\end{corollary}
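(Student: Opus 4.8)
The plan is simply to chase the chain of reductions the paper has already set up and close it off with Proposition~\ref{phicalc}. Recall that the discussion following Corollary~\ref{formula for positivity} shows that Conjecture~\ref{PanhandlePositive} holds once $\phi_{r,s,n}(t)$ in~\eqref{phisrlt} has positive coefficients, and that interchanging the order of summation in~\eqref{phisrlt} reduces this further to showing that $\psi_{s,r,\ell}(t)$ from~\eqref{psisrlt} has positive coefficients for every $0\le\ell\le s-1$ --- the relevant point being that $\phi_{r,s,n}(t)=\sum_{\ell=0}^{s-1}(n-2-\ell)!\,\ell!\,\psi_{s,r,\ell}(t)$, where each scalar prefactor $(n-2-\ell)!\,\ell!$ is strictly positive on this range (using $r\le s<n$). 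So all that remains is to read off the coefficients of $\psi_{s,r,\ell}(t)$, and Proposition~\ref{phicalc} does exactly that.

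Concretely, for part~(1) I would rewrite the first identity of Proposition~\ref{phicalc} as
\[
(s-1-\ell)!\,\ell!\;\psi_{s,r,\ell}(t)=\sum_{k=0}^{s-1}t^{k}\sum_{m=0}^{k}\zeta(r,s,k,\ell,m),
\]
so that the coefficient of $t^{k}$ in $\psi_{s,r,\ell}(t)$ equals $\tfrac{1}{(s-1-\ell)!\,\ell!}\sum_{m=0}^{k}\zeta(r,s,k,\ell,m)$. Under the hypothesis that $\zeta(r,s,k,\ell,m)>0$ for all admissible $k,\ell,m$, this is a positive scalar times a sum of positive numbers, hence positive; thus $\psi_{s,r,\ell}(t)$ has positive coefficients for every $\ell$, and the reductions above yield positivity of the coefficients of $\phi_{r,s,n}(t)$, and therefore of $\ehr_{\Pan_{r,s,n}}(t)$, proving Conjecture~\ref{PanhandlePositive}. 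Part~(2) is identical, with Theorem~\ref{thm:Ehrhart-relaxation-improved} in place of Corollary~\ref{formula for positivity}: there $\ehr_{\Rel_H(M)}(t)=\ehr_M(t)+\tfrac{n-s}{(n-1)!}\binom{t-1+n-s}{n-s}\tilde\phi_{r,s,n}(t)$, the binomial factor $\binom{t-1+n-s}{n-s}=\tfrac{1}{(n-s)!}\,t(t+1)\cdots(t+n-s-1)$ has nonnegative coefficients, and the second identity of Proposition~\ref{phicalc} gives $(s-1-\ell)!\,\ell!\,\tilde\psi_{s,r,\ell}(t)=\sum_{k}t^{k}\sum_{m}\tilde\zeta(r,s,k,\ell,m)$; positivity of $\tilde\zeta$ then forces $\tilde\phi_{r,s,n}(t)$ to have positive coefficients, and adding this nonnegative-coefficient polynomial to the Ehrhart-positive $\ehr_M(t)$ leaves every coefficient positive, i.e.\ $\Rel_H(M)$ is Ehrhart positive, which is Conjecture~\ref{RelaxationPositive}.

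There is essentially no genuine obstacle internal to this corollary: it is pure bookkeeping, assembled from Corollary~\ref{formula for positivity}, Theorem~\ref{thm:Ehrhart-relaxation-improved}, and Proposition~\ref{phicalc}, together with the elementary fact that sums and products of polynomials with nonnegative coefficients again have nonnegative coefficients. The only places demanding a line of justification are (i) confirming that each scalar prefactor encountered --- $\tfrac{n-s}{(n-1)!}$, $(n-2-\ell)!$, $\ell!$, $(s-1-\ell)!$ --- is positive over the stated index ranges (it is, once $r\le s<n$, with the usual convention that a degenerate factorial flags a vanishing summand), and (ii) noting that the added term in part~(2) has zero constant term, so it cannot disturb the constant term $1$ of $\ehr_M(t)$. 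The real difficulty is entirely downstream: it lies in verifying the hypotheses $\zeta(r,s,k,\ell,m)>0$ and $\tilde\zeta(r,s,k,\ell,m)>0$ --- essentially Conjecture~\ref{big conjecture} --- which this corollary deliberately isolates and does not address.
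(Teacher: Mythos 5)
Your proposal is correct and is essentially the paper's own argument: the corollary is stated without proof precisely because it follows immediately from the reduction to $\psi_{s,r,\ell}$ and $\tilde\psi_{s,r,\ell}$ described after Corollary~\ref{formula for positivity} and Theorem~\ref{thm:Ehrhart-relaxation-improved}, combined with the coefficient extraction in Proposition~\ref{phicalc}, exactly as you assemble it. Your bookkeeping of the positive prefactors and of the vanishing constant term in part (2) matches the paper's intended reasoning.
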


Therefore, we would like to show that $\zeta(r,s,k,\ell,m)$ is positive.
To that end, consider the power series
\begin{align*}
F_{s,k,\ell,m}(x)&=\sum_{i\geq 0}(-1)^i\binom{s}{i} \elm{s-1-\ell-m}{-i+1,s-1-\ell-i} \elm{\ell-k+m}{s-\ell-i,s-1-i}x^i,\\
\tilde F_{s,k,\ell,m}(x) &= \sum_{i\geq 0}(-1)^i\binom{s}{i} \elm{s-1-\ell-m}{-i,s-2-\ell-i} \elm{\ell-k+m}{s-\ell-i,s-1-i}x^i,\\
G_{s,k,\ell,m}(x)&=\sum_{n\geq 0}n^{k-m}(n+1)^mx^n,
\end{align*}
so that
\begin{subequations}
\begin{align}
F_{s,k,\ell,m}(x)G_{s,k,\ell,m}(x) &= \sum_r \zeta(r,s,k,\ell,m) x^{s-r}, \label{FGzeta} \\
\tilde F_{s,k,\ell,m}(x)G_{s,k,\ell,m}(x) &= \sum_r \tilde\zeta(r,s,k,\ell,m) x^{s-r}. \label{FGzetatilde}
\end{align}
\end{subequations}
First we consider $G_{s,k,\ell,m}(x)$.  As before, let $A(n,k)$ denote the Eulerian number which counts permutations of $[n]$ with exactly $k$ descents.
\textit{Worpitzky's identity} \cite[p.269]{concretemath} states that
\begin{equation} \label{worpitzky}
x^m = \sum_{a=0}^{m-1} A(m,a) \binom{x+a}{m}.
\end{equation}

We will also need the following formula for the product of binomial coefficients~\cite[p.171]{concretemath}:
\begin{equation}\label{binomials}
\binom{Q}{R} \binom{S}{T} = \sum_{U=0}^T \binom{R-Q+S}{U} \binom{T+Q-S}{T-U} \binom{Q+U}{R+T}.
\end{equation}

\begin{proposition}\label{prop:Gprop}
Let $s$ be a positive integer, $0\leq \ell\leq s-1$, $0\leq k\leq s-1$, and $0\leq m\leq k$.
Then $G_{s,k,\ell,m}(x) = P_{s,k,\ell,m}(x)/(1-x)^{k+1}$, where
\[
P_{s,k,\ell,m}(x)=\sum_{a=0}^{k-m-1}\sum_{b=0}^{m-1}\sum_{c=0}^m A(k-m,a)A(m,b)\binom{k-m+1+b-a}{c}\binom{m+a-b-1}{m-c}x^{k-a-c}.
\]
In particular, $P_{s,k,\ell,m}(x)$ is a polynomial with positive coefficients.
\end{proposition}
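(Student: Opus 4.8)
The plan is to read the rational form of $G_{s,k,\ell,m}(x)$ directly off its defining series: use Worpitzky's identity to strip the polynomial factors $n^{k-m}$ and $(n+1)^m$, merge the two resulting binomial coefficients into a single shifted binomial $\binom{n+j}{k}$, and then sum the geometric-type series $\sum_n\binom{n+j}{k}x^n$. Note first that $G_{s,k,\ell,m}(x)$ depends only on $k$ and $m$, so the indices $s,\ell$ are inert. First I would apply \eqref{worpitzky} to each of $n^{k-m}$ and $(n+1)^m$, obtaining
\[
n^{k-m}(n+1)^m=\sum_{a=0}^{k-m-1}\sum_{b=0}^{m-1}A(k-m,a)\,A(m,b)\binom{n+a}{k-m}\binom{n+1+b}{m}.
\]
Then I would apply the product identity \eqref{binomials} with $Q=n+a$, $R=k-m$, $S=n+1+b$, $T=m$, for which $R-Q+S=k-m+1+b-a$, $T+Q-S=m+a-b-1$, and $R+T=k$, so that
\[
\binom{n+a}{k-m}\binom{n+1+b}{m}=\sum_{c=0}^{m}\binom{k-m+1+b-a}{c}\binom{m+a-b-1}{m-c}\binom{n+a+c}{k}.
\]

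Next I would invoke the elementary identity $\sum_{n\ge0}\binom{n+j}{k}x^n=x^{k-j}/(1-x)^{k+1}$, valid for $0\le j\le k$ because $\binom{n+j}{k}$ vanishes for $n<k-j$ and $\sum_{p\ge0}\binom{p+k}{k}x^p=(1-x)^{-(k+1)}$. In the index ranges above we have $0\le a+c\le(k-m-1)+m=k-1<k$, so this applies with $j=a+c$; multiplying the last display by $x^n$, summing over $n\ge0$, and collecting terms gives
\[
G_{s,k,\ell,m}(x)=\frac{1}{(1-x)^{k+1}}\sum_{a=0}^{k-m-1}\sum_{b=0}^{m-1}\sum_{c=0}^{m}A(k-m,a)\,A(m,b)\binom{k-m+1+b-a}{c}\binom{m+a-b-1}{m-c}x^{k-a-c},
\]
which is precisely $P_{s,k,\ell,m}(x)/(1-x)^{k+1}$. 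For the positivity clause, note that over the summation range $0\le a\le k-m-1$, $0\le b\le m-1$, $0\le c\le m$ one has $k-m+1+b-a\ge2$ and $m+a-b-1\ge0$, so both binomial coefficients are nonnegative integers, while the Eulerian numbers $A(k-m,a)$ and $A(m,b)$ are positive; hence every coefficient of $P_{s,k,\ell,m}(x)$ is a sum of nonnegative terms.

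Since the argument rests on two standard identities, I do not anticipate a genuine obstacle. The places that require care are: the bookkeeping that $a+c\le k-1$, which is what guarantees that the series summation produces no extra polynomial term beyond $x^{k-j}/(1-x)^{k+1}$; the sign checks on the binomial arguments in the positivity step; and the degenerate cases in which a Worpitzky sum is empty (e.g.\ $m=0$, $k-m=0$, or $k=0$), which are handled by reading \eqref{worpitzky} for exponent $0$ as $x^0=\binom{x}{0}$ and running the same computation.
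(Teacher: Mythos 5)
Your proof is correct and follows essentially the same route as the paper's: Worpitzky's identity \eqref{worpitzky} applied to $n^{k-m}$ and $(n+1)^m$, the product identity \eqref{binomials} with the same substitution $Q=n+a$, $R=k-m$, $S=n+1+b$, $T=m$, and then summing $\sum_{n}\binom{n+a+c}{k}x^n$ to produce $x^{k-a-c}/(1-x)^{k+1}$, with the same observation that $a+c\leq k-1$. Your explicit positivity check and remarks on the degenerate cases $m=0$, $k=m$ are harmless additions beyond what the paper records.
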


\begin{proof}
The statement follows from the following calculation
\begin{align*}
    &G_{s,k,\ell,m}(x)=\sum_{n\geq 0}n^{k-m}(n+1)^mx^n\\
    &=\sum_{a=0}^{k-m-1}A(k-m,a)\binom{n+a}{k-m}\sum_{b=0}^{m-1} A(m,b)\binom{n+1+b}{m}\sum_{n\geq0}x^n
    \intertext{(by~\eqref{worpitzky})}
    &=\sum_{a,b}A(k-m,a)A(m,b)\sum_{n\geq0}\sum_{c=0}^m\binom{k-m+1+b-a}{c}\binom{m+a-b-1}{m-c}\binom{n+a+c}{k}x^n
    \intertext{(by~\eqref{binomials}, with $Q = n+a$, $R=k-m$, $S = n+1+b$, $T = m$, $U = c$)}
    &=\sum_{a,b,c}A(k-m,a)A(m,b)\binom{k-m+1+b-a}{c}\binom{m+a-b-1}{m-c}\sum_{n\geq k-a-c}\binom{n+a+c}{k}x^n\\
    &=\sum_{a,b,c}A(k-m,a)A(m,b)\binom{k-m+1+b-a}{c}\binom{m+a-b-1}{m-c}\frac{x^{k-a-c}}{(1-x)^{k+1}}
\end{align*}
by the standard Taylor expansion of $1/(1-x)^{k+1}$.
Note that $a\leq k-m-1$ and $c\leq m$ implies $k-a-c>0$.
\end{proof}

We now return to $F_{s,k,\ell,m}$ and $\tilde F_{s,k,\ell,m}$.
Their coefficients are \textit{not} in general positive.
On the other hand, in light of Proposition~\ref{prop:Gprop}, we may rewrite~\eqref{FGzeta} and~\eqref{FGzetatilde} as
\begin{subequations}
\begin{align}
\sum_r \zeta(r,s,k,\ell,m) x^{s-r} &= \frac{F_{s,k,\ell,m}(x)}{(1-x)^{k+1}} \ P_{s,k,\ell,m}(x), \label{FGzeta:2}\\
\sum_r \tilde\zeta(r,s,k,\ell,m) x^{s-r} &= \frac{\tilde F_{s,k,\ell,m}(x)}{(1-x)^{k+1}} \ P_{s,k,\ell,m}(x). \label{FGzetatilde:2}
\end{align}
\end{subequations}

Since Proposition~\eqref{prop:Gprop} implies that $P_{s,k,\ell,m}$ has positive coefficients, Conjecture~\ref{PanhandlePositive} reduces to the problem of showing that $F_{s,k,\ell,m}(x)/(1-x)^{k+1}$ has positive coefficients.
A computation shows that the coefficient of $x^q$ in $F_{s,k,\ell,m}(x)\frac{1}{(1-x)^{k+1}}$ (resp., $\tilde F_{s,k,\ell,m}(x)\frac{1}{(1-x)^{k+1}}$) is the quantity $\xi(q,s,k,\ell,m)$ (resp., $\tilde\xi(q,s,k,\ell,m)$) defined by
\begin{align*}
\xi(q,s,k,\ell,m) &= \sum_{i=0}^q(-1)^i\binom{s}{i} \elm{s-1-\ell-m}{-i+1,s-1-\ell-i} \elm{\ell-k+m}{s-\ell-i,s-1-i} \binom{k+q-i}{k},\\
\tilde\xi(q,s,k,\ell,m) &= \sum_{i=0}^q(-1)^i\binom{s}{i} \elm{s-1-\ell-m}{-i,s-2-\ell-i} \elm{\ell-k+m}{s-\ell-i,s-1-i} \binom{k+q-i}{k}.
\end{align*}

We now describe a conjectured combinatorial interpretation for $\xi(q,s,k,\ell,m)$, which would imply Ehrhart positivity for panhandle matroids.
We suspect that there should be an analogous interpretation of $\tilde\xi(q,s,k,\ell,m)$, although we have not found one.
The main combinatorial objects are known as \textit{chain forests}, since they can be viewed as partially ordered sets that are disjoint unions of chains.

\begin{definition}
A \defterm{chain forest} is a partition of $[n]$ into an unordered set of blocks, each of which is internally ordered.
\end{definition}

We write chain forests with bars for delimiters between blocks: $S=B_1|\cdots|B_k$.
The number $k$ of blocks is the \defterm{length} of $S$, written $|S|$.
The first and last element of a block are its \defterm{leader} and its \defterm{trailer}.
The \defterm{standard representation} of a chain forest lists the blocks in increasing order by their leaders; e.g., $28|31|5|69|74$.
For example, the chain forests on $[3]$ are
\[\begin{array}{lll l lll l l}
123 & 213 & 312 && 12|3 & 1|23 & 13|2 && 1|2|3\\
132 & 231 & 321 && 21|3 & 1|32 & 2|31
\end{array}.\]

Let $S=B_1|\cdots|B_k$ be a chain forest, where $B_i=(b_{i_1},\dots,b_{i_{\ell_i}})$.
The \defterm{weight} of a block $B_i$ is the number of elements of $B_i$ that are less than its leader. 
Formally, notating the weight of $B_i$ by $\wt(B_i)$, we have
\[\wt(B_i) =  \#\{j\in[\ell_i]:\ b_{i_j}<b_{i_1}\}.\]
The weight of the chain forest $S$ is
\[
\wt(S)=\sum_{i=1}^k\wt(B_i).
\]
For example, $\wt(4\underline{3}5\underline{2}6)=2$ and $\wt(1|3\underline{2}|6\underline{45}|78)=3$.

If $S=B_1|\cdots|B_k$ is a standard representation, define
\[\gamma(S,\ell)=\#\{i\in[k]:\ |B_1|+\cdots+|B_i|>\ell\}=k-\max\{j:\ |B_1|+\cdots+|B_j|\leq\ell\}.\]

In other words, if we remove the delimiters from the standard representation of $S$, then $\gamma(S,\ell)$ is the number of trailers occurring after the $\ell$th position.
For example, the trailers of the chain forest $S=1|32|645|78$ are $1,2,5,8$.
After removing delimiters, we have $13264578$, with 1,2,5,8 in positions 1,3,6,8 respectively.  
So $\gamma(S,0)=|S|=4$, $\gamma(S,1)=\gamma(S,2)=3$, $\gamma(S,3)=2$, etc.

Define
\begin{align*}
\CF(n) &= \{\text{chain forests on $[n]$}\},\\
\CF(n,k) &= \{S\in\CF(n):\ |S|=k\},\\
\CF(q,n,k) &= \{S\in\CF(n,k):\ \wt(S)=q\},\text{ and}\\
\CF(q,n,k,\ell,m)&=\{S\in \CF(q,n,k):\ \gamma(S,\ell)=m\}.
\end{align*}
The \defterm{weighted Lah number} $W(q,n,k)$, studied by Ferroni~\cite{FerroniHypersimplices} is
\[W(q,n,k)=|\CF(q,n,k)|.\]
We will return to these numbers shortly, in Section~\ref{sec:weighted Lah}.

\begin{conjecture}\label{big conjecture}
Let
\[\bar\xi(q,s,k,\ell,m)=\xi(q,s,k-1,\ell,m-1)
= \sum_{i=0}^q(-1)^i\binom{s}{i} \elm{s-\ell-m}{-i+1,s-1-\ell-i} \elm{\ell-k+m}{s-\ell-i,s-1-i} \binom{k-1+q-i}{k-1}.\]
Then
\[
\bar\xi(q,s,k,\ell,m)=|\CF(q,s,k,\ell,m)|.
\]
\end{conjecture}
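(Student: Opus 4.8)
The plan is to deduce Conjecture~\ref{big conjecture} as a ``position-$\ell$ refinement'' of the combinatorial proof of the weighted Lah number formula given in Section~\ref{sec:weighted Lah}. The key preliminary observation is that summing $\bar\xi(q,s,k,\ell,m)$ over $m$ already reproduces that formula: writing the size-$(s-1)$ interval $\{-i+1,\dots,s-1-i\}$ as the disjoint union of its first $s-1-\ell$ and last $\ell$ elements, the product rule for elementary symmetric functions gives
\[
\elm{s-k}{-i+1,s-1-i}=\sum_{m}\elm{s-\ell-m}{-i+1,s-1-\ell-i}\,\elm{\ell-k+m}{s-\ell-i,s-1-i},
\]
so $\sum_m\bar\xi(q,s,k,\ell,m)=\sum_{i\ge0}(-1)^i\binom{s}{i}\elm{s-k}{-i+1,s-1-i}\binom{k-1+q-i}{k-1}$, which equals $W(q,s,k)=|\CG(q,s,k)|=\sum_m|\CG(q,s,k,\ell,m)|$ (this is the $\ell=0$, $m=k$ instance of the conjecture, and is the weighted Lah identity proved in Section~\ref{sec:weighted Lah}, since $\gamma(S,0)=|S|$ always). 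Thus the conjecture says that whatever combinatorial proof establishes the $W$-formula can be cut at position $\ell$, with $\elm{\ell-k+m}{s-\ell-i,s-1-i}$ accounting for the part of the block structure landing in the first $\ell$ positions of the standard representation and $\elm{s-\ell-m}{-i+1,s-1-\ell-i}$ for the rest.

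Concretely, I would proceed as follows. First, recall from Section~\ref{sec:weighted Lah} the interpretation of $(-1)^i\binom{s}{i}\elm{s-k}{-i+1,s-1-i}\binom{k-1+q-i}{k-1}$ as the signed cardinality of a family of ``decorated'' chain gangs, carrying an auxiliary $i$-subset of $[s]$ (responsible for $(-1)^i\binom{s}{i}$) together with the residual weight data counted by $\binom{k-1+q-i}{k-1}$, and the sign-reversing involution that cancels all decorated ($i>0$) objects and leaves exactly $\CG(q,s,k)$. Second, attach to every decorated object the statistic $\gamma(\cdot,\ell)$---the number of trailers occurring after the $\ell$th position once the standard representation is flattened---and verify that the $e$-splitting identity above matches the split of the decorated structure at position $\ell$: the $\ell$ largest shifted values $\{s-\ell-i,\dots,s-1-i\}$ governing $\elm{\ell-k+m}{\cdot}$ should biject with the block (equivalently cycle-type) data occupying positions $1,\dots,\ell$, and $\{-i+1,\dots,s-1-\ell-i\}$ with the block data after position $\ell$, the numerical shift by $i$ recording how many marked elements precede a given position. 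Third---the crucial step---check that the involution of Section~\ref{sec:weighted Lah} preserves $\gamma(\cdot,\ell)$: if it only edits data internal to a single block, or trades a mark against structure without transporting any block boundary across position $\ell$, then it descends to each fixed value of $m$ and the refined identity follows term by term in $m$.

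An alternative route is purely generating-functional. Using $\sum_n\elm{n}{a,b}x^n=\prod_{j=a}^b(1+jx)$ and $\sum_q\binom{k-1+q}{k-1}x^q=(1-x)^{-k}$ one writes $\sum_q\bar\xi(q,s,k,\ell,m)\,x^q$ in closed form for each $i$; on the combinatorial side one expands $\sum_q|\CG(q,s,k,\ell,m)|x^q$ via the exponential formula with block exponential generating function $\tfrac{1}{1-x}\ln\tfrac{1-xz}{1-z}$, refined by a ``straddling-block'' decomposition (the first $k-m$ blocks have total size at most $\ell$; the next block straddles position $\ell$; the last $m-1$ blocks follow), while imposing the leader-ordering constraint.

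Either way, the main obstacle I anticipate is the tension between the two statistics being tracked. The weight $\wt$ and the block count $k$ are \emph{block-local}, which is precisely why the $W$-formula enjoys such clean (quasi-symmetric, exponential-formula) structure, whereas $\gamma(S,\ell)$ is a \emph{positional} statistic governed by the global sorting of blocks by leader---a condition coupling block contents to block order. The real work is to find the right encoding of standard representations under which the leader-sorting becomes transparent, plausibly a bijection that linearizes $\gamma(\cdot,\ell)$ or an inclusion--exclusion over which of the $k$ leaders fall among the first $\ell$ positions; once that is in hand, both the involution argument and the generating-function argument should reduce to bookkeeping.
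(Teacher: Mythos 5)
You should first be aware that the paper does not prove this statement: Conjecture~\ref{big conjecture} is left open by the authors, who report only computational verification (Sage, all parameters with $s\leq 7$) and offer the combinatorial proof of Theorem~\ref{weighted lah} explicitly ``with the hope that the argument can be extended'' to it. Your proposal follows exactly that hoped-for route, and your preliminary reduction is correct and worth recording: since $\{-i+1,\dots,s-1-i\}$ splits into its first $s-1-\ell$ and last $\ell$ elements, the convolution
\[
\elm{s-k}{-i+1,s-1-i}=\sum_{m}\elm{s-\ell-m}{-i+1,s-1-\ell-i}\,\elm{\ell-k+m}{s-\ell-i,s-1-i}
\]
does show $\sum_m\bar\xi(q,s,k,\ell,m)=W(q,s,k)=\sum_m|\CG(q,s,k,\ell,m)|$, i.e.\ the conjecture is consistent with~\eqref{new weighted lah} after marginalizing over $m$. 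But this only checks a marginal; it does not separate the $m$-slices, and everything after that in your write-up is a plan rather than an argument.

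The concrete gap is the step you yourself call crucial: that the cancellation proving Theorem~\ref{weighted lah} is compatible with the positional statistic. This is not bookkeeping, and it is where the difficulty genuinely lives. The signed objects in Lemma~\ref{final lah form} are weighted permutations in $S(s,k,q,Z,a)$ whose cycles inside $Z$ are \emph{improperly} weighted; such objects do not correspond to chain gangs at all (the leader of a cycle is its $(x(c)+1)$th smallest element, which only makes sense when $x(c)<|c|$), so $\gamma(\cdot,\ell)$ is simply undefined on them. To refine the argument you must define a statistic on these signed objects that (i) restricts to $\gamma(\cdot,\ell)$ on the surviving properly weighted objects, (ii) is constant under the cancellation over $B\subseteq[r]$, which moves entire improperly weighted cycles in and out of $Z$ and hence changes $i$ and the $i$-dependent split point $s-\ell-i$ in your intervals, and (iii) for each fixed $Z$ and $m$ reproduces exactly the product $\elm{s-\ell-m}{-i+1,s-1-\ell-i}\,\elm{\ell-k+m}{s-\ell-i,s-1-i}\binom{k-1+q-i}{k-1}$ — noting that the residual-weight binomial does not factor between the two sides, while the trailer positions (hence $m$) depend on the leaders, which depend on the cycle weights, so weight and position do not decouple the way $\wt$ and $k$ do in the unrefined proof. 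Your guess that the shift by $i$ ``records how many marked elements precede a given position'' is plausible but unverified, and your closing paragraph concedes that the needed encoding is not in hand. As written, the proposal therefore restates the authors' open problem (with a correct sanity check) rather than resolving it.
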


We have verified Conjecture~\ref{big conjecture}, using Sage \cite{Sage}, for all values of the parameters with $s\leq 7$.
We are therefore confident in its correctness (hence that of Conjecture~\ref{PanhandlePositive}), but have not yet found a general proof.


\subsection{A combinatorial proof of Ferroni's formula for weighted Lah numbers} \label{sec:weighted Lah}
Ferroni gave an algebraic proof of the following closed formula for the weighted Lah numbers:

\begin{theorem} \cite[Cor.~3.13]{FerroniHypersimplices} \label{weighted lah}
The weighted Lah numbers are given by the formula $W(q,n,k)=\eta(q,n,k)$, where
\begin{equation} \label{eqn:weighted lah}
\eta(q,n,k)=\sum_{j=0}^q\sum_{i=0}^{j} (-1)^{j+i} \binom{n}{j} \stir{j}{j-i}\stir{n-j}{k-j+i}\binom{k-1+q-j}{k-1}.
\end{equation}
\end{theorem}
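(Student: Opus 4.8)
The plan is to give every factor of $\eta(q,n,k)$ a combinatorial meaning, repackage the whole sum as a single signed count over decorated permutations, cancel the signs by a sign-reversing involution, and finally biject the surviving (fixed) objects with chain gangs.

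\emph{Step 1: unwinding $\eta$.} Fix $q,n,k$ and examine the summand indexed by $(j,i)$. I would read $\binom{n}{j}$ as the choice of a subset $S\subseteq[n]$ with $|S|=j$, read $\stir{j}{j-i}$ as the choice of a permutation of $S$ with $j-i$ cycles, and read $\stir{n-j}{k-j+i}$ as the choice of a permutation of $[n]\sm S$ with $k-j+i$ cycles (summands in which a Stirling number vanishes correspond to no objects). Gluing the two permutations produces a permutation $\pi\in\Sym_n$ with exactly $(j-i)+(k-j+i)=k$ cycles, for which $S$ is automatically a union of cycles, exactly $j-i$ of which lie inside $S$. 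Writing $c_S$ for the number of cycles of $\pi$ contained in $S$, we get $c_S=j-i$, hence $(-1)^{j+i}=(-1)^{j-i}=(-1)^{c_S}$. Ordering the cycles of $\pi$ by least element, I would read $\binom{k-1+q-j}{k-1}$ as a weak composition of $q-j=q-|S|$ into $k$ parts, that is, an assignment of $d_C\in\N$ to each cycle $C$ with $\sum_C d_C=q-|S|$. This is a bijection onto the set of triples $(\pi,S,(d_C))$ with $\pi\in\Sym_n$ having $k$ cycles, $S$ a union of cycles of $\pi$, and $\sum_C d_C=q-|S|$, under which $\eta(q,n,k)$ becomes the signed count $\sum(-1)^{c_S}$ over all such triples.

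\emph{Step 2: cancelling the signs.} Call a cycle $C$ of such a triple \emph{active} if $C\subseteq S$, or if $C\not\subseteq S$ and $d_C\geq|C|$. Whenever an active cycle exists, let $C^\ast$ be the active cycle with smallest minimum element and toggle it: if $C^\ast\subseteq S$, remove it from $S$ and replace $d_{C^\ast}$ by $d_{C^\ast}+|C^\ast|$; otherwise (so $d_{C^\ast}\geq|C^\ast|$) add $C^\ast$ to $S$ and replace $d_{C^\ast}$ by $d_{C^\ast}-|C^\ast|$. One checks that this keeps $d_{C^\ast}\geq0$, preserves $\sum_C d_C=q-|S|$, leaves $\pi$ and the set of active cycles unchanged---so $C^\ast$ is recovered---and changes $c_S$ by $\pm1$; hence it is a sign-reversing involution. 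Its fixed points are the triples with no active cycle, namely $S=\emptyset$ together with $0\le d_C\le|C|-1$ for every cycle $C$, for which $\sum_C d_C=q$. Therefore $\eta(q,n,k)$ equals the number of pairs $(\pi,(d_C))$ with $\pi\in\Sym_n$ having $k$ cycles, $0\le d_C\le|C|-1$, and $\sum_C d_C=q$.

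\emph{Step 3: matching fixed points with chain gangs.} A permutation of $[n]$ with $k$ cycles is precisely a partition of $[n]$ into $k$ blocks together with a cyclic order on each block. The elementary point is that for a cyclically ordered $a$-element set and any target $w\in\{0,\dots,a-1\}$ there is exactly one way to cut the cyclic order open (choice of first element) whose weight is $w$: as the first element varies, the leader runs over all $a$ elements, so the weight---the number of block elements below the leader---takes each value $0,1,\dots,a-1$ once. Cutting each cycle $C$ open to weight $d_C$ turns $(\pi,(d_C))$ into a chain gang on $[n]$ with $k$ blocks and total weight $\sum_C d_C=q$, and the construction inverts by reclosing each block into a cycle and recording its weight. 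Combining the three steps gives $\eta(q,n,k)=|\CG(q,n,k)|=W(q,n,k)$. The one genuinely delicate point should be Step 2: pinning down ``active'' as exactly the right hypothesis so that the toggle always returns a valid triple (nonnegativity of $d_{C^\ast}$, the membership structure of $S$) and $C^\ast$ is genuinely recovered as the least active cycle of the image, so the map is an honest involution; once that dictionary is in place the rest is bookkeeping, and it is this involutive mechanism that one would hope to upgrade toward Conjecture~\ref{big conjecture}.
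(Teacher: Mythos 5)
Your proof is correct and follows essentially the same route as the paper: both interpret the summand as counting permutations of $[n]$ with $k$ cycles together with a distinguished union of cycles (your $S$, the paper's $Z$) whose cycles carry excess weight, both cancel these configurations---the paper via the identity $\sum_m(-1)^m\binom{r}{m}=0$ applied cycle-set by cycle-set, you via the equivalent toggle-the-least-active-cycle involution---and both finish with the same cut-each-cycle-at-its-$(d_C+1)$th-smallest-element bijection between properly weighted permutations and chain gangs. The only difference is presentational: your explicit sign-reversing involution is the bijective form of the paper's inclusion--exclusion step.
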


An equivalent formula is
\begin{equation} \label{new weighted lah}
W(q,n,k)=\sum_{j=0}^q(-1)^j\binom{n}{j} \elm{n-k}{-j+1,n-1-j} \binom{k-1+q-j}{k-1},
\end{equation}
which can be transformed into Ferroni's original formula by applying~\eqref{elm-sign} followed by~\eqref{elm-stirling}.
There is a strong resemblance between $\eqref{new weighted lah}$ and the expression $\bar\xi(q,s,k,\ell,m)$ of Conjecture~\ref{big conjecture}.
Accordingly, we give a combinatorial proof of Theorem \ref{weighted lah} that we believe is of independent interest, and as mentioned before, that we hope will be useful in proving Conjecture~\ref{big conjecture}.

Let $\Sym_n$ denote the symmetric group of permutations of $[n]$.
We write $C(\sigma)$ for the set of cycles of a permutation $\sigma\in\Sym_n$, and $|c|$ for the length of a cycle $c$.

\begin{definition}
A \defterm{weighted permutation} of $[n]$ is a pair $(\sigma,x)$, where $\sigma\in\Sym_n$ and $x$ is a function $C(\sigma)\to\N$.
The \defterm{total weight} is $|x|=\sum_{c\in C(\sigma)}x(c)$.
A cycle $c\in C(\sigma)$ is \defterm{properly weighted} if $x(c)<|c|$; the pair $(\sigma,x)$ is \defterm{properly weighted} if all cycles are properly weighted.
\end{definition}

There is a straightforward bijection between
\begin{enumerate}
\item[(i)] properly weighted permutations $(\sigma,x)$ of $[n]$ with $k$ cycles and total weight $q$, and
\item[(ii)] chain forests $S$ on $[n]$ with $k$ blocks and weight $q$.
\end{enumerate}
Specifically, to construct the standard representation of $S$, write each cycle $c\in C(\sigma)$ with its $(x(c)+1)$th smallest element first, then sort the cycles by the first elements.
For example, if $\sigma=(1\;4\;8\;2\;6)(3)(5\;9\;7)$ is a permutation with cycles weighted 2,0,1 respectively, then the corresponding chain forest is $3|48261|759$ in standard form.

\begin{definition}
Let $Z\subseteq [n]$ and $b\geq 0$.
We define $S(n,k,q,Z,b)$ as the set of weighted permutations $(\sigma,x)$ such that:
\begin{itemize}
\item $\sigma\in\Sym_n$;
\item $|C(\sigma)|=k$;
\item $|x|=q$;
\item $b$ of the cycles consist only of elements in $Z$, and all such cycles are improperly weighted;
\item $k-b$ cycles consist only of elements in $[n]\sm Z$.
\end{itemize}
In particular, if $Z\neq\0$ then $S(n,k,q,Z,0)=\emptyset$, while $S(n,k,q,\0,0)$ is just the set of all weighted permutations of $[n]$ with total weight $q$ and $k$ cycles.
\end{definition}

\begin{lemma}\label{final lah form}
For all $n,k,q$ with $1\leq k\leq n$, we have
\[
\eta(q,n,k)=\sum_{j=0}^q\sum_{i=0}^j\sum_{Z\subseteq [n],\, |Z|=j}(-1)^{j-i}|S(n,k,q,Z,j-i)|.
\]
\end{lemma}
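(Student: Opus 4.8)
The plan is to prove the identity termwise in $j$ and $i$. Comparing the explicit formula for $\eta(q,n,k)$ in~\eqref{eqn:weighted lah} with the right-hand side of the asserted identity, and noting that $(-1)^{j+i}=(-1)^{j-i}$ since $j+i$ and $j-i$ have the same parity, it suffices to establish the purely enumerative identity
\[
\sum_{Z\subseteq[n],\,|Z|=j}\bigl|S(n,k,q,Z,j-i)\bigr|
\;=\;
\binom{n}{j}\stir{j}{j-i}\stir{n-j}{k-j+i}\binom{k-1+q-j}{k-1}
\]
for each pair $(j,i)$ with $0\le i\le j\le q$; summing over this range (which matches the summation limits in $\eta$, with terms outside the natural ranges of the Stirling numbers vanishing on both sides) then completes the proof.

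First I would invoke symmetry: relabeling the elements of $[n]$ is a bijection preserving cycle structure, cycle lengths, weights, and membership in a distinguished subset, so $\bigl|S(n,k,q,Z,j-i)\bigr|$ depends only on $|Z|=j$. Hence the left-hand sum equals $\binom{n}{j}\,\bigl|S(n,k,q,Z_0,j-i)\bigr|$ for any fixed $j$-set $Z_0$, and it remains to count $S(n,k,q,Z_0,j-i)$. Writing $a=j-i$, observe that every cycle of a weighted permutation in this set is ``pure'' (supported entirely in $Z_0$ or entirely in $[n]\sm Z_0$), so the data decompose into three independent choices: (i) a permutation of $Z_0$ with $a$ cycles, contributing $\stir{j}{a}=\stir{j}{j-i}$; (ii) a permutation of $[n]\sm Z_0$ with $k-a=k-j+i$ cycles, contributing $\stir{n-j}{k-j+i}$; and (iii) a choice of weight function.

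For the weights, a cycle $c$ supported in $Z_0$ must be improperly weighted, i.e.\ $x(c)\ge|c|$, so I would substitute $x(c)=|c|+y(c)$ with $y(c)\ge0$; a cycle $c$ supported in $[n]\sm Z_0$ is unconstrained, i.e.\ $x(c)\ge0$. Since $\sum_{c\subseteq Z_0}|c|=|Z_0|=j$, the total-weight condition $|x|=q$ becomes the statement that the $k$ nonnegative integers consisting of the $y(c)$ (for $Z_0$-cycles) together with the $x(c)$ (for the other cycles) sum to $q-j$; by stars and bars this has $\binom{(q-j)+(k-1)}{k-1}=\binom{k-1+q-j}{k-1}$ solutions. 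Crucially, this count does not depend on the cycle lengths, so it is uniform across all choices made in (i) and (ii); multiplying the three counts yields the displayed formula. The only subtle point is precisely this last observation — that the lengths of the $Z_0$-cycles contribute exactly $j$ to the total weight no matter how $Z_0$ is partitioned — which is what allows the three enumerations to factor; everything else is routine bookkeeping.
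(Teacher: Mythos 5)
Your proposal is correct and follows essentially the same route as the paper: reduce to the termwise identity $\stir{j}{j-i}\stir{n-j}{k-j+i}\binom{k-1+q-j}{k-1}=|S(n,k,q,Z,j-i)|$ for a fixed $j$-set $Z$ (the $\binom{n}{j}$ coming from the choice of $Z$), then factor the count into the two Stirling-number choices of cycle structure and a stars-and-bars count of weights after shifting each improperly weighted $Z$-cycle's weight down by its length, so the weights sum to $q-j$. Your explicit remarks on the sign parity and on the symmetry in $Z$ only make explicit what the paper leaves implicit; no substantive difference.
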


\begin{proof}
It suffices to show that
\begin{equation} \label{countSZ}
\stir{j}{j-i} \stir{n-j}{k-j+i}\binom{k-1+q-j}{k-1} = |S(n,k,q,Z,j-i)|,
\end{equation}
where $Z$ is any subset of $[n]$ of size $j$.
Indeed, let $(\sigma,x)\in S(n,k,q,Z,j-i)$.
Then $\sigma$ is described by a permutation of $Z$ with $j-i$ cycles $c_1,\dots,c_{j-i}$ ($\stir{j}{j-i}$ possibilities) together with a permutation of $[n]\sm Z$ with $k-j+i$ cycles $c_{j-i+1},\dots,c_k$ ($\stir{n-j}{k-j+i}$ possibilities).
Moreover, the weight function $x$ is given by a list of $k$ nonnegative integers with sum $q-|Z|=q-j$, namely $(x(c_1)-|c_1|,\,\dots,\,x(c_{j-i})-|c_{j-i}|,\,x(c_{j-i+1}),\,\dots,\,x(c_k))$, so by a standard stars-and-bars argument there are $\binom{k-1+q-j}{k-1}$ possibilities, establishing~\eqref{countSZ}.
\end{proof}

\begin{proof}[Proof of Theorem \ref{weighted lah}.]
We will employ the ``Garsia chi notation'': for a statement $P$, we define
\[\Gchi{P}=\begin{cases} 1 & \text{ if $P$ is true},\\ 0 & \text{ if $P$ is false.}\end{cases}\]

In light of Lemma~\ref{final lah form} and the bijection between weighted permutations and chain forests, it suffices to show that each weighted permutation $(\sigma,x)$ with $k$ blocks and total weight $q$ satisfies
\begin{equation} \label{Gchi-weighted-perm}
\sum_{i=0}^q\sum_{j=0}^{i}\sum_{Z\subseteq [n],\, |Z|=i}(-1)^{i-j}\Gchi{(\sigma,x)\in S(n,k,q,Z,i-j)}
~=~
\Gchi{(\sigma,x)\text{ is properly weighted}}.
\end{equation}

First, if $(\sigma,x)$ is properly weighted, then the $i=j=0$ summand is 1 and all other summands vanish, so~\eqref{Gchi-weighted-perm} holds.

Second, suppose that $(\sigma,x)$ is improperly weighted.
Let $\sigma=c_1\dots c_k$ be the cycle decomposition of $\sigma$ where, for convenience, $c_1,\dots,c_r$ are precisely the improperly weighted cycles.
Set $Z_B=\bigcup_{b\in B}c_b$ (here we are identifying $c_b$ with the set of its elements) for each $B\subseteq[r]$.
Then the sets $S(n,k,q,Z,i-j)$ containing $(\sigma,x)$ are precisely those for which $Z=Z_B$ for some $B\subseteq[r]$ with $|B|=i-j$.
Setting $m=i-j$, the left-hand side of~\eqref{Gchi-weighted-perm} becomes
\[\sum_{m=0}^r\sum_{B\subseteq[r],\: |B|=m}(-1)^m \Gchi{(\sigma,x)\in S(n,k,q,Z_B,m)}
=\sum_{m=0}^r(-1)^m\binom{r}{m}
= 0.\qedhere\]
\end{proof}


\section{Volumes of panhandle and paving matroid base polytopes}\label{sec:volumes}

In this section we obtain a volume formula for panhandle matroids, as an application of a more general formula due to Ashraf~\cite{Ashraf}.

\subsection{Ashraf's volume formula} \label{sec:Ashraf}
Throughout, we assume that $M$ is a connected, loopless matroid of rank~$r$ on ground set $[n]$.

A flat $F$ of $M$ is a \defterm{cyclic flat} if the restriction $ M|_F $ (i.e., the matroid on $F$ whose rank function is the restriction of that of $M$) has no coloops.
Equivalently, a cyclic flat is a flat that is a union of circuits.
We will be concerned with \defterm{anchored chains of cyclic flats:} sequences $\F=(F_0,\dots,F_k)$ of cyclic flats of $M$ such that
\[\0=F_0 \subsetneq F_1 \subsetneq \cdots \subsetneq F_k=[n].\]
The anchored chains of cyclic flats form a poset $\Gamma=\Gamma(M)$ under reverse containment: $ \F \leq \G $ if $\F\supseteq\G$.
Let $\hat\Gamma$ be the poset obtained from $\Gamma$ by adjoining a bottom element $ \hat{0} $.

\renewcommand{\aa}{\mathbf{a}}
\newcommand{\bb}{\mathbf{b}}
To each $\F=(F_0,\dots,F_k)\in\Gamma$, associate the binary string
\begin{multline*}
\bb_\F =
1^{\rank(F_1)} \
0^{|F_1|-\rank(F_1)} \
1^{\rank(F_2)-\rank(F_1)} \
0^{|F_2|-\rank(F_2)-|F_1|+\rank(F_1)} \\
\cdots \
1^{r-\rank(F_{k-1})}
0^{n-r-|F_{k-1}|+\rank(F_{k-1})}.
\end{multline*}
In particular, the prefix of $\bb_\F$ ending with the $i$th block of zeros has length $|F_i|$.

Given positive integers $r \leq n$, let $L(r,n)$ denote the set of all binary strings of length~$n$ that start with a one, end with a zero, and have exactly~$r$ ones.
We endow $ L(r,n) $ with the following partial order: for binary strings $ \aa = a_1 a_2 \cdots a_n $ and $ \bb = b_1 b_2 \cdots b_n $, we say $ \aa \leq \bb $ if for every positive integer $ j \leq n $,
\[
\sum_{i = 1}^{j} a_{i} \leq \sum_{i = 1}^{j} b_{i}.
\]

Let $w=w_1\cdots w_{n-1}\in\Sym_{n-1}$.  The \defterm{descent set} of $w$ is $\Des(w)=\{i\in[n-2]:\ w_i>w_{i+1}\}$.  The \defterm{descent string} of $w$ is the binary string  $ \bb_{\des} (w) = 1d_1d_2 \cdots d_{n-2} 0$, where
\[
	d_{i} = \begin{cases}
		1 &\text{if } w_i > w_{i+1},\\
		0 &\text{if } w_i < w_{i+1}.
	\end{cases}
\]
For $ \bb \in L(r,n) $,  define
\[
\delta_{\leq}(\bb) = \sum_{\aa\in L(r,n):\ \aa \leq \bb } \#\{w\in\Sym_{n-1} : \bb_{\des}(w) = \aa\}.
\]

Ashraf's volume formula \cite[Theorem~1.1]{Ashraf} states that for a connected matroid $M$ on~$[n]$ of rank~$r$, the normalized volume of its base polytope is given by
\begin{equation}
\label{AshrafThm}
	\Vol(\Po_{M}) = -\sum_{\F\in\Gamma(M)} \mu_{\hat\Gamma(M)}(\hat0,\F) \delta_{\leq}(\mathbf{b}_{\F}),
\end{equation}
where $\mu_{\hat\Gamma(M)}$ is the  M\"obius function for the poset $\hat\Gamma(M)$.

\subsection{Volumes of panhandle matroid base polytopes}

\begin{lemma}\label{lem:CycFlatsPan}
We have the following.
\begin{enumerate}
\item The panhandle matroid $\Pan_{r,s,n}$ has three cyclic flats, namely $\varnothing$, $[n]$, and $[s+1,n]$.
\item $\Pan_{r,s,n}$ has two anchored chains of cyclic flats, namely
$\F_0=(\0,[n])$ and $\F_1=(\0,[s+1,n],[n])$, and so
\[
\hat{\Gamma}(\Pan_{r,s,n}) =
\{ \hat{0} < \F_1 < \F_0 \}.
\]
\item $b_{\F_1} = 1 0^{n-s-1} 1^{r-1} 0^{s-r+1}$.
\end{enumerate}
\end{lemma}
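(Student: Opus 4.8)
The plan is to combine the rank-function and flat descriptions of $\Pan_{r,s,n}$ (Propositions~\ref{prop:rank_panhandle} and~\ref{prop:flats_for_panhandle}) with the definitions of Section~\ref{sec:Ashraf}; all three parts then reduce to bookkeeping. Throughout I would assume $r\ge2$ and $n\ge s+2$, since otherwise $\Pan_{r,s,n}\cong U_{r,n}$, whose two cyclic flats $\varnothing,[n]$ and whose volume are classical. For part~(1), I would use the characterization that a flat $F$ is cyclic exactly when $M|_F$ has no coloop, equivalently when $\rank(F\sm\{x\})=\rank(F)$ for every $x\in F$ (deleting an element drops the rank by at most one). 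Since $[n]$ is the only flat of rank~$r$ (a proper flat of rank~$r$ would violate the strict-rank-increase property defining a flat), Proposition~\ref{prop:flats_for_panhandle} gives the complete list of flats: the Type~1 sets $F\subseteq[s]$ with $|F|\le r-1$, the Type~2 sets $[s+1,n]\cup A$ with $A\subseteq[s]$ and $|A|\le r-2$, and $[n]$.

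Next I would run each flat through the coloop criterion using Proposition~\ref{prop:rank_panhandle}. A nonempty Type~1 flat $F$ has $\rank(F)=|F|$ but $\rank(F\sm\{x\})=|F|-1$, so it is not cyclic, while the empty flat is vacuously cyclic. A Type~2 flat $[s+1,n]\cup A$ with $A\ne\varnothing$ has rank $|A|+1$, but deleting an $a\in A$ leaves rank $|A|$, so it is not cyclic. The set $[s+1,n]$ has rank~$1$, and for any $x\in[s+1,n]$ the set $[s+1,n]\sm\{x\}$ is still a nonempty subset of $[s+1,n]$ (using $n-s\ge2$), hence also of rank~$1$; so $[s+1,n]$ is cyclic. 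Finally $[n]$ is cyclic, since for every $x\in[n]$ the set $([n]\sm\{x\})\cap[s]$ still has at least $r-1$ elements and thus $\rank([n]\sm\{x\})=r=\rank([n])$. This establishes that the cyclic flats are exactly $\varnothing$, $[s+1,n]$, and $[n]$.

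Part~(2) is then immediate: an anchored chain of cyclic flats runs from $\varnothing$ to $[n]$ through cyclic flats, and since $\varnothing\subsetneq[s+1,n]\subsetneq[n]$ is a chain, the only such sequences are $\F_0=(\varnothing,[n])$ and $\F_1=(\varnothing,[s+1,n],[n])$; as sets $\F_1\supsetneq\F_0$, so $\F_1<\F_0$ in the reverse-containment order, and adjoining $\hat{0}$ gives the three-element chain claimed. For part~(3) I would substitute $\F=\F_1$ (so $k=2$, $F_1=[s+1,n]$, $F_2=[n]$) into the definition of $\bb_\F$: with $|F_1|=n-s$, $\rank(F_1)=1$, $\rank(F_2)=r$, the successive blocks are $1^{\rank(F_1)}=1$, then $0^{|F_1|-\rank(F_1)}=0^{n-s-1}$, then $1^{r-\rank(F_1)}=1^{r-1}$, then $0^{n-r-|F_1|+\rank(F_1)}=0^{s-r+1}$, giving $b_{\F_1}=1\,0^{n-s-1}\,1^{r-1}\,0^{s-r+1}$; this string has length $n$, exactly $r$ ones, and begins with a one and ends with a zero, as it should.

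The only step needing real care is part~(1), where one must check that the coloop criterion excludes \emph{precisely} the nonempty Type~1 flats and the Type~2 flats with $A\ne\varnothing$, and keep track of the hypotheses $r\ge2$ and $n\ge s+2$ ensuring that $[s+1,n]$ is genuinely a flat of rank one with at least two elements. Granting part~(1), parts~(2) and~(3) follow from the definitions with no further input.
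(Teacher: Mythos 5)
Your proof is correct and follows essentially the same route as the paper: list the flats via Proposition~\ref{prop:flats_for_panhandle}, rule out the non-cyclic ones by exhibiting coloops using the rank function of Proposition~\ref{prop:rank_panhandle}, and then read off parts (2) and (3) directly from the definitions. Your explicit verification that $[s+1,n]$ and $[n]$ are cyclic, together with your attention to the degenerate cases $r=1$ and $n=s+1$ (where $\Pan_{r,s,n}$ is uniform and $[s+1,n]$ is not a cyclic flat), is if anything slightly more careful than the paper's own argument, which leaves those checks implicit.
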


\begin{proof}
(1) By Proposition \ref{prop:flats_for_panhandle}, the panhandle matroid $\Pan_{r,s,n}$ on $[n]$ has two types of flats:
\begin{itemize}
	\item Subsets of $ [s] $ of size at most $ r-1 $.
	\item Sets of the form $ [s+1,n] \cup A $, where $ A \subseteq [s] $ is a set of size at most $ r-2 $.
\end{itemize}
No nonempty flat $F$ of the first type is cyclic, because every element of $M|_F$ is a coloop.
On the other hand, if $F$ is a flat of the second type, then every element of $ A $ is a coloop in $M|_F$, so the only cyclic flat is $ [s+1, n] $ itself.

(2) These assertions are immediate from (1).

(3) The formula follows from the definition of $\bb_\F$ and the fact that $\rank([s+1,n]) = 1 $ and $\rank([n]) = r $ by Proposition \ref{prop:rank_panhandle}.
\end{proof}

Since $\hat\Gamma$ is a chain with three elements, we have $\mu_{\hat\Gamma}(\hat0,\F_0)=0$ and
$\mu_{\hat\Gamma}(\hat0,\F_1)=-1$, so the volume formula~\eqref{AshrafThm} simplifies to
\begin{align}
\Vol(\Po_{\Pan_{r,s,n}}) = \delta_{\leq}(\bb_{\F_1})
&=\delta_{\leq}(1\; 0^{n-s-1}\; 1^{r-1}\; 0^{s-r+1}) \notag\\
&= \#\left\{w\in\Sym_{n-1} : \bb_{\des}(w) \; \underset{L(r,n)}{\leq}\; 1\; 0^{n-s-1}\; 1^{r-1}\; 0^{s-r+1} \right\}\notag\\
&= \#\left\{w\in\Sym_{n-1} : \Des(w)\subseteq[n-s,n-2] \text{ and } |\Des(w)|=r-1\right\}\label{volume-from-Des}
\end{align}

To produce an explicit formula, define for $S=\{s_1<s_2<\cdots<s_{r-1}\}\subseteq[n-2]$
\begin{equation} \label{alpha-beta}
\begin{aligned}
\alpha_n(S) &= |\{w\in\Sym_{n-1} : \Des(w) \subseteq S\}|,\\
\beta_n(S) &= |\{w\in\Sym_{n-1} : \Des(w)=S\}|.
\end{aligned}
\end{equation}

By \cite[Proposition~1.4.1]{Stanley} we have
\begin{equation} \label{Equ:Alpha}
\alpha_n(S)=
\binom{n-1}{s_1,\ s_2-s_1,\ s_3-s_2,\ \dots,\ s_{r-1}-s_{r-2},\ n-1-s_{r-1}}
\end{equation}
and by inclusion/exclusion
\begin{equation} \label{Equ:Beta}
\beta_n(S) = \sum_{T\subseteq S} (-1)^{|S-T|} \alpha_n(T).
\end{equation}
By using~\eqref{Equ:Alpha} and~\eqref{Equ:Beta}, we obtain our final formula, which we state in a self-contained form.

\begin{theorem}\label{thm:volofpan}
The volume of the base polytope of the panhandle matroid $\Pan_{r,s,n}$ is
\begin{align*}
\Vol(\Po_{\Pan_{r,s,n}})
&= \sum_{\substack{S \subseteq [n-s,n-2] \\ |S| = r-1}} \beta_n(S)\\
&= \sum_{\substack{S \subseteq [n-s,n-2] \\ |S| = r-1}} \sum_{\substack{T\subseteq S\\ T=\{t_1<\cdots<t_k\}}} (-1)^{|S-T|} \binom{n-1}{t_1,\ t_2-t_1,\ \dots,\ t_k-t_{k-1},\ n-1-t_k}.
\end{align*}
\end{theorem}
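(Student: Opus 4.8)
The plan is to assemble the theorem from the reduction already carried out in this subsection. The starting point is equation~\eqref{volume-from-Des}, which followed from Ashraf's formula~\eqref{AshrafThm} together with the computation of the cyclic flats of $\Pan_{r,s,n}$ in Lemma~\ref{lem:CycFlatsPan}: it asserts that $\Vol(\Po_{\Pan_{r,s,n}})$ equals the number of $w\in\Sym_{n-1}$ with $\Des(w)\subseteq[n-s,n-2]$ and $|\Des(w)|=r-1$. The first move is simply to stratify this set of permutations according to the value $S=\Des(w)$. As $w$ ranges over the set in question, $S$ ranges over all $(r-1)$-element subsets of $[n-s,n-2]$ with no further restriction, and for each such $S$ the fiber $\{w:\ \Des(w)=S\}$ has cardinality $\beta_n(S)$ by the definition in~\eqref{alpha-beta}. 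This yields the first displayed equality, $\Vol(\Po_{\Pan_{r,s,n}})=\sum_{S\subseteq[n-s,n-2],\,|S|=r-1}\beta_n(S)$.

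For the second equality I would expand each $\beta_n(S)$ using the inclusion--exclusion identity~\eqref{Equ:Beta}, namely $\beta_n(S)=\sum_{T\subseteq S}(-1)^{|S-T|}\alpha_n(T)$, and then substitute Stanley's multinomial expression~\eqref{Equ:Alpha} for $\alpha_n(T)$ with $T=\{t_1<\cdots<t_k\}$. Relabelling the inner summation index gives exactly the claimed double sum; the term $T=\0$ contributes $\alpha_n(\0)=(n-1)!=\binom{n-1}{n-1}$, which is the $k=0$ instance of the stated multinomial, so no separate convention is needed.

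This is really bookkeeping: once Lemma~\ref{lem:CycFlatsPan} and the simplification of~\eqref{AshrafThm} leading to~\eqref{volume-from-Des} are in hand, nothing substantive remains. The only point that deserves a second look is the stratification step, i.e.\ confirming that~\eqref{volume-from-Des} imposes no condition on $S$ beyond $S\subseteq[n-s,n-2]$ and $|S|=r-1$, and it is worth sanity-checking the boundary cases: when $s=r$ there is a single admissible $S$, recovering the known volume of a minimal connected matroid, and when $r=1$ the only admissible $S$ is $\0$, so the formula correctly returns $\Vol=1$, matching $\Pan_{1,s,n}\cong U_{1,n}$ whose base polytope is a standard simplex. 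If instead one wanted a proof that does not lean on~\eqref{volume-from-Des}, the genuine work would be in translating the order condition $\bb_{\des}(w)\leq\bb_{\F_1}$ in $L(r,n)$ into the descent-set condition $\Des(w)\subseteq[n-s,n-2]$, $|\Des(w)|=r-1$, which is precisely the content of that equation; that translation, rather than anything in the present theorem, is the step I would expect to require the most care.
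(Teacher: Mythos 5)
Your proposal is correct and follows essentially the same route as the paper: starting from the simplification of Ashraf's formula in~\eqref{volume-from-Des}, stratifying the permutations by their descent set $S\subseteq[n-s,n-2]$ with $|S|=r-1$, and expanding $\beta_n(S)$ via~\eqref{Equ:Beta} and~\eqref{Equ:Alpha}. The paper itself treats the theorem as exactly this bookkeeping step, so nothing further is needed.
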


\subsection{Volumes under relaxations of stressed hyperplanes} \label{volume-relax}

The formula for $\Vol(\Po_{\Pan_{r,s,n}})$ is key in describing how the volume of a matroid base polytope changes under relaxation of stressed hyperplanes. 
If $M=M_1\oplus M_2$ is disconnected, then $\Po_M=\Po_{M_1} \times \Po_{M_2}$ and so $\Vol(\Po_M) = \Vol(\Po_{M_1}) \Vol(\Po_{M_2})$.
By Proposition~\ref{prop:disconnected_stressed}, if $M$ has a stressed hyperplane, then both $\Po_{M_1}$ and $\Po_{M_2}$ are hypersimplices (in fact $\Po_{M_2}$ is a simplex), whose volume is given by~\eqref{eq:volume-hypersimplex}.
Thus, we restrict our discussion to connected matroids.

\begin{theorem}\label{thm:volume-relaxation}
Let $M$ be a connected rank-$r$ matroid on $[n]$ with a stressed hyperplane $H$ of cardinality $s$.
Then
\[\Vol(\Po_M) = \Vol(\Po_{\Rel_H(M)}) - \sum_{\substack{T \subseteq [n-s,n-2] \\ |T| = r-1}} \beta_n(T)\]
where $\beta_n(T)$ is as defined in~\eqref{Equ:Beta}.
\end{theorem}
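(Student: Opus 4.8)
The plan is to reduce the statement to the volume formula for panhandle matroids (Theorem~\ref{thm:volofpan}) via the geometric decomposition already established in the proof of Theorem~\ref{thm:Ehrhart-relaxation}. Assume without loss of generality that $H=[s]$, and write $\widetilde M=\Rel_H(M)$. Recall from that proof that $\Po_{\widetilde M}=\Po_{\Pan_{r,s,n}}\cup\Po_M$, and that the intersection $\Po_{\Pan_{r,s,n}}\cap\Po_M$ is the base polytope of $U_{r-1,s}\oplus U_{1,n-s}$, i.e.\ the set
\[
\Big\{x\in[0,1]^n:\ \sum_{i=1}^n x_i=r,\ \sum_{i=s+1}^n x_i=1\Big\}.
\]

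First I would record the dimension count. The matroids $M$ and $\Pan_{r,s,n}$ are connected (the latter by \cite[Theorem~3.6]{LPM}), so $\dim\Po_M=\dim\Po_{\Pan_{r,s,n}}=n-1$; since $\Po_{\widetilde M}\supseteq\Po_M$ and $\Po_{\widetilde M}$ lies in the hyperplane $\{\sum x_i=r\}$, also $\dim\Po_{\widetilde M}=n-1$. On the other hand $U_{r-1,s}\oplus U_{1,n-s}$ has two connected components, so its base polytope has dimension $n-2$. Therefore $\Po_{\Pan_{r,s,n}}$ and $\Po_M$ meet only in a set of dimension $n-2$, hence of measure zero inside the common affine span $A=\{x\in\R^n:\sum x_i=r\}$.

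Next I would conclude additivity of normalized volume. All three polytopes $\Po_M$, $\Po_{\widetilde M}$, $\Po_{\Pan_{r,s,n}}$ are full-dimensional inside $A$, so their normalized volumes are all computed with respect to the same lattice $\Z^n\cap A$. Since $\Po_{\widetilde M}$ is the union of $\Po_{\Pan_{r,s,n}}$ and $\Po_M$ along a lower-dimensional set, we get
\[
\Vol(\Po_{\widetilde M})=\Vol(\Po_{\Pan_{r,s,n}})+\Vol(\Po_M),
\]
that is, $\Vol(\Po_M)=\Vol(\Po_{\Rel_H(M)})-\Vol(\Po_{\Pan_{r,s,n}})$. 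Finally I would substitute the value of $\Vol(\Po_{\Pan_{r,s,n}})$ supplied by Theorem~\ref{thm:volofpan}, namely $\sum_{S\subseteq[n-s,n-2],\,|S|=r-1}\beta_n(S)$, to obtain the claimed identity.

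I do not anticipate a genuine obstacle here: the only point requiring a small amount of care is justifying that the union $\Po_{\Pan_{r,s,n}}\cup\Po_M$ is "clean'' — i.e.\ that the two pieces have disjoint interiors — but this is immediate from the explicit description of the intersection above and the dimension count, so the argument is essentially a bookkeeping step once Theorems~\ref{thm:Ehrhart-relaxation} and~\ref{thm:volofpan} are in hand.
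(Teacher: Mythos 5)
Your proposal is correct and follows essentially the same route as the paper: both use the decomposition $\Po_{\Rel_H(M)}=\Po_M\cup\Po_{\Pan_{r,s,n}}$ from the proof of Theorem~\ref{thm:Ehrhart-relaxation}, observe that the intersection is lower-dimensional (a facet of the panhandle polytope) so that normalized volumes add, and then substitute the formula of Theorem~\ref{thm:volofpan}. Your additional dimension bookkeeping (connectedness of $M$ and $\Pan_{r,s,n}$, the disconnected intersection $U_{r-1,s}\oplus U_{1,n-s}$, and the common lattice $\Z^n\cap A$) only makes explicit what the paper leaves implicit.
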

\begin{proof}
As in the proof of Proposition~\ref{prop:Ehrhart-relaxation}, $\Po_{\Rel_H(M)} = \Po_{M} \cup \Po_{\Pan_{r,s,n}}$.
Moreover, $\Po_{M} \cap \Po_{\Pan_{r,s,n}}$ is a facet of $\Pan_{r,s,n}$.
We know that $\dim \Po_{\Rel_H(M)} =\dim\Po_M$ (since $M$ is connected), and therefore $\Vol(\Po_{M}) = \Vol(\Po_{\Rel_H(M)}) - \Vol(\Po_{\Pan_{r,s,n}})$.
The result follows by applying Theorem~\ref{thm:volofpan}.
\end{proof}

\begin{theorem} \label{thm:volume-paving}
Let $M$ be a connected rank-$r$ \underline{paving} matroid on $[n]$ whose set of hyperplanes is $\HH$.
For each $r \leq s \leq n$, let $\HH_s=\{H\in\HH:\ |H|=s\}$.
Then
\begin{align*}
\Vol(\Po_M)
&= A(n-1,r-1) - \sum_{s=r}^n |\HH_s| \sum_{\substack{T \subseteq [n-s,n-2] \\ |T| = r-1}} \beta_n(T)
\end{align*}
where $\beta_n(T)$ is as defined in~\eqref{Equ:Beta}.
\end{theorem}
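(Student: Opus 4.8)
The plan is to iterate Theorem~\ref{thm:volume-relaxation}, exactly as Theorem~\ref{thm:ehrhart-paving} iterates Theorem~\ref{thm:Ehrhart-relaxation} in the Ehrhart setting.  Since $M$ is paving, Proposition~\ref{prop:stressed_hyps_paving} says every hyperplane of $M$ is stressed, and a hyperplane $H$ with $|H|<r$ can only be relaxed vacuously (there is no $r$-subset of $H$ to adjoin), so only the hyperplanes in $\bigcup_{s\ge r}\HH_s$ are relevant.  Enumerate these as $H_1,\dots,H_N$, where $N=\sum_{s\ge r}|\HH_s|$, put $M^{(0)}=M$, and set $M^{(j)}=\Rel_{H_j}(M^{(j-1)})$ for $1\le j\le N$.

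First I would verify that this chain of relaxations is legitimate, so that Theorem~\ref{thm:volume-relaxation} applies at every step.  By \cite[Cor.~3.9]{relaxation}, relaxing $H_i$ leaves every other $H_j$ a stressed hyperplane, so inductively $H_j$ is a stressed hyperplane of $M^{(j-1)}$; moreover $\Po_{M^{(j)}}\supseteq\Po_M$ has full dimension $n-1$ because $M$ is connected, so each $M^{(j)}$ is connected.  Theorem~\ref{thm:volume-relaxation} then gives, with $s_j:=|H_j|$,
\[
\Vol(\Po_{M^{(j-1)}})=\Vol(\Po_{M^{(j)}})-\sum_{\substack{T\subseteq[n-s_j,n-2]\\ |T|=r-1}}\beta_n(T).
\]

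Next I would use the fact, recorded in the discussion preceding Theorem~\ref{thm:ehrhart-paving} (and apparent from the hyperplane characterization of paving matroids stated after Proposition~\ref{prop:stressed_hyps_paving}), that relaxing all hyperplanes of size $\ge r$ of a paving matroid produces a uniform matroid, so $M^{(N)}=U_{r,n}$.  Telescoping the displayed identity over $j=1,\dots,N$ and using $\Vol(\Po_{U_{r,n}})=\Vol(\Delta_{r,n})=A(n-1,r-1)$ from~\eqref{eq:volume-hypersimplex} gives
\[
\Vol(\Po_M)=A(n-1,r-1)-\sum_{j=1}^{N}\ \sum_{\substack{T\subseteq[n-s_j,n-2]\\ |T|=r-1}}\beta_n(T).
\]
Grouping the hyperplanes by cardinality replaces $\sum_{j=1}^{N}$ by $\sum_{s=r}^{n}|\HH_s|$ (the $s=n$ term being empty, since a hyperplane is a proper subset of $[n]$), which is exactly the asserted formula.

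The main obstacle is the bookkeeping underlying the previous two paragraphs: checking that each $M^{(j)}$ remains connected and that the un-relaxed hyperplanes persist as stressed hyperplanes, and confirming that the iteration terminates at $U_{r,n}$ after precisely $\sum_{s\ge r}|\HH_s|$ steps (equivalently, that relaxing a hyperplane of size $\ge r$ destroys that hyperplane and creates no new hyperplane of size $\ge r$).  These are essentially the ingredients already used in the proof of Theorem~\ref{thm:ehrhart-paving}; granting them, the rest is a telescoping sum together with the classical volume~\eqref{eq:volume-hypersimplex} of the hypersimplex.
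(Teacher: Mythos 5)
Your proposal is correct and follows exactly the paper's argument: the paper proves this theorem by repeated application of Theorem~\ref{thm:volume-relaxation} together with the hypersimplex volume formula~\eqref{eq:volume-hypersimplex}, which is precisely your telescoping iteration. The bookkeeping you flag (persistence of stressed hyperplanes via \cite[Cor.~3.9]{relaxation}, connectivity of the intermediate matroids, termination at $U_{r,n}$) is the same as what underlies Theorem~\ref{thm:ehrhart-paving}, so your write-up is if anything more detailed than the paper's.
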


\begin{proof}
The result follows from repeated applications of Theorem~\ref{thm:volume-relaxation}, using the volume formula for hypersimplices~\eqref{eq:volume-hypersimplex}.
\end{proof}

\section{Application: Steiner systems and projective planes} \label{subsec:steiner_systems}

Steiner systems are a class of combinatorial designs that include finite projective planes.
Every Steiner system gives rise to a paving matroid.
In this section, we specialize our results to give Ehrhart polynomials and volume formulas for matroid base polytopes corresponding to Steiner systems and finite projective planes.

We first briefly review the well-known theory of Steiner systems and matroids; see \cite[Section 6.1, Chapter 12]{Oxley} and \cite[Chapter 12]{Welsh}.

\begin{definition}
A \defterm{Steiner system} $S(t,k,n)$ consists of a \defterm{ground set} $E$ of $n$ points and a family~$\HH$ of $k$-subsets of $E$, called \defterm{blocks}, such that every $t$-subset of $E$ is contained in a unique block.
\end{definition}

Each block contains $\binom{k}{t}$ $t$-subsets of $E$, and no two blocks contain the same $t$-subset, so
\begin{equation*}\label{block_enum}
|\HH| = \frac{\binom{n}{t}}{\binom{k}{t}}.
\end{equation*}
There is in general no guarantee of existence or uniqueness of a Steiner system for particular parameters $t,k,n$.

\begin{definition}\label{def:PG}
Let $q\geq2$ be a positive integer.
A \defterm{projective plane of order~$q$} is a collection of $q^2+q+1$ points and $q^2+q+1$ lines, such that
\begin{itemize}
\item every line contains $q+1$ points;
\item every point lies on $q+1$ lines;
\item every two points lie on exactly one line;
\item every two lines intersect in exactly one point.
\end{itemize}
In particular, every projective plane of order~$q$ is a Steiner system $S(2,q+1,q^2+q+1)$, whose blocks are the lines.
\end{definition}

There is a standard construction of a projective plane of order~$q$ when $q$ is a prime power: the points and lines are respectively the 1- and 2-dimensional subspaces of a 3-dimensional vector space over the finite field with~$q$ elements.
Not every finite projective plane arises in this way; for example, there exist non-isomorphic projective planes of all prime powers $q\geq9$ \cite[p.702]{Codes}; see also \cite[\S6]{Cameron}.
(Projective geometries of dimensions greater than~2, do not in general give rise to Steiner systems or to paving matroids, so we do not consider them here.)

By \cite[Prop.~2.1.24]{Oxley}, a Steiner system $S(t,k,n)$ on ground set $E$ gives rise to a paving matroid on $E$ of rank $r=t+1$, whose hyperplanes are the blocks of $S$; in particular, all hyperplanes have cardinality~$k$.
See also \cite[p.202]{Welsh}.
In particular, a projective plane of order $q$ gives rise to a paving matroid of rank~3 whose hyperplanes all have cardinality $q+1$.
Accordingly, we can apply Theorem~\ref{thm:Ehrhart-paving-improved-formula} to write down the Ehrhart polynomials for these matroids, using the polynomial expression $\tilde\phi_{r,s,n}(t)$ defined in~\eqref{tildephi}.

\begin{proposition} \label{prop:ehrhart-Steiner}
Let $M$ be the rank-$r$ paving matroid corresponding to a Steiner system $S(r-1,k,n)$.
Then
\[
\ehr_{M}(t)=\ehr_{U_{r,n}}(t)-\frac{\binom{n}{r-1}}{\binom{k}{r-1}}\frac{n-k}{(n-1)!}\binom{t-1+n-k}{n-k} \tilde\phi_{r,k,n}(t).
\]
\end{proposition}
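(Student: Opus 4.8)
The plan is a direct application of Theorem~\ref{thm:paving-formula}; all that is required is to read off the hyperplane data of the matroid $M$ attached to the Steiner system $S(r-1,k,n)$.

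First I would recall, as cited above from \cite[Prop.~2.1.24]{Oxley}, that the Steiner system $S(r-1,k,n)$ determines a paving matroid $M$ of rank $r$ on its $n$-element ground set whose hyperplanes are exactly the blocks of $S$; in particular every hyperplane has cardinality $k$. The defining inequalities $r-1<k<n$ of a Steiner system give $r\le k\le n-1$, so in the notation $\HH_s=\{H\in\HH:|H|=s\}$ of Theorem~\ref{thm:paving-formula} we have $\HH_s=\varnothing$ for every $s\in\{r,\dots,n-1\}$ with $s\ne k$, and the only surviving index is $s=k$.

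Next I would record the number of blocks. Double counting incidences between blocks and $(r-1)$-subsets of the ground set --- each block contains $\binom{k}{r-1}$ such subsets, and each $(r-1)$-subset lies in exactly one block --- yields $|\HH|\binom{k}{r-1}=\binom{n}{r-1}$, so that $|\HH_k|=|\HH|=\binom{n}{r-1}/\binom{k}{r-1}$. (This is exactly the enumeration noted right after the definition of a Steiner system.)

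Finally, substituting these values of $|\HH_s|$ into
\[
\ehr_M(t)=\ehr_{U_{r,n}}(t)-\sum_{s=r}^{n-1}|\HH_s|\,\frac{n-s}{(n-1)!}\binom{t-1+n-s}{n-s}\,\tilde\phi_{r,s,n}(t)
\]
collapses the sum to the single term $s=k$ and produces the claimed formula. I expect essentially no obstacle here: the only point warranting a sentence of verification is that $s=k$ lies in the summation range $\{r,\dots,n-1\}$, which is immediate from the Steiner-system parameter inequalities. (In the degenerate boundary case $k=r-1$ one has $\tilde\phi_{r,r-1,n}\equiv0$ because its defining sum~\eqref{tildephi} is empty, so the stated identity continues to hold trivially.)
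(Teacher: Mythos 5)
Your proposal is correct and matches the paper's (essentially unwritten) argument: the paper likewise obtains Proposition~\ref{prop:ehrhart-Steiner} by noting that the Steiner-system matroid is a rank-$r$ paving matroid whose hyperplanes are the blocks, all of size $k$ and numbering $\binom{n}{r-1}/\binom{k}{r-1}$, and then specializing Theorem~\ref{thm:paving-formula} so that only the $s=k$ term survives. Your extra remarks (checking $s=k$ lies in the summation range, and the degenerate case $k=r-1$ where $\tilde\phi_{r,r-1,n}\equiv 0$) are harmless additions to the same argument.
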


\begin{proposition} \label{prop:ehrhart-projective-plane}
Let $M$ be the rank-3 paving matroid corresponding to a finite projective plane of order~$q$, i.e., a Steiner system $S(2,q+1,q^2+q+1)$.
Then
\[
\ehr_{M}(t)=\ehr_{U_{3,n}}(t)-\frac{\binom{n}{2}}{\binom{q+1}{2}}\frac{q^2}{(q^2+q)!}\binom{t+q^2-1}{n-q-1} \tilde\phi_{3,q+1,q^2+q+1}(t).
\]
\end{proposition}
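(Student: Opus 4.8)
The plan is to obtain this as a direct specialization of Proposition~\ref{prop:ehrhart-Steiner}. By Definition~\ref{def:PG}, a finite projective plane of order~$q$ is precisely a Steiner system $S(2,q+1,q^2+q+1)$, with the lines as blocks. Hence the associated paving matroid $M$ has rank $r=3$ (since $t=r-1=2$), ground set of size $n=q^2+q+1$, and every hyperplane of cardinality $k=q+1$, so the hypotheses of Proposition~\ref{prop:ehrhart-Steiner} hold with these values of $r,k,n$.

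The remaining work is an arithmetic substitution. First I would record the identities $n-k=(q^2+q+1)-(q+1)=q^2$, $\ n-1=q^2+q$, and $n-q-1=q^2$; the last one shows that the binomial coefficient $\binom{t-1+n-k}{n-k}$ in Proposition~\ref{prop:ehrhart-Steiner} equals $\binom{t+q^2-1}{n-q-1}$ as stated. Then I would plug $r=3$, $k=q+1$, $n=q^2+q+1$ into
\[
\ehr_{M}(t)=\ehr_{U_{r,n}}(t)-\frac{\binom{n}{r-1}}{\binom{k}{r-1}}\,\frac{n-k}{(n-1)!}\,\binom{t-1+n-k}{n-k}\,\tilde\phi_{r,k,n}(t),
\]
so that $\frac{\binom{n}{r-1}}{\binom{k}{r-1}}$ becomes $\frac{\binom{n}{2}}{\binom{q+1}{2}}$ (which, if desired, simplifies to $q^2+q+1$, the number of lines), $\frac{n-k}{(n-1)!}$ becomes $\frac{q^2}{(q^2+q)!}$, and $\tilde\phi_{r,k,n}(t)$ becomes $\tilde\phi_{3,q+1,q^2+q+1}(t)$, yielding the claimed formula.

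I do not expect a genuine obstacle: once Proposition~\ref{prop:ehrhart-Steiner} is granted, the statement is a verification of parameters. The one point worth an explicit sentence is that Proposition~\ref{prop:ehrhart-Steiner} is itself the case of Theorem~\ref{thm:paving-formula} in which $\HH_s=\emptyset$ for $s\neq k$ and $|\HH_k|=\binom{n}{r-1}/\binom{k}{r-1}$ by the block-count $|\HH|=\binom{n}{t}/\binom{k}{t}$; for the projective plane this count is $q^2+q+1$, matching Definition~\ref{def:PG}. Thus the proof reduces to citing Proposition~\ref{prop:ehrhart-Steiner} and carrying out the substitution above.
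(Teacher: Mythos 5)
Your proposal is correct and matches the paper's (implicit) argument: Proposition~\ref{prop:ehrhart-projective-plane} is exactly the specialization of Proposition~\ref{prop:ehrhart-Steiner} (itself Theorem~\ref{thm:paving-formula} with all hyperplanes of size $k$ and $|\HH_k|=\binom{n}{r-1}/\binom{k}{r-1}$) to $r=3$, $k=q+1$, $n=q^2+q+1$, using $n-k=n-q-1=q^2$ and $n-1=q^2+q$. The parameter check you carry out is all that is needed.
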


In particular, the base polytopes of non-isomorphic Steiner systems or projective planes with the same parameters have the same Ehrhart polynomial (and thus the same volume), even though the polytopes themselves are unlikely to be isomorphic.

\begin{example} \label{exa:Ehrhart-Fano}
Let $M$ be the paving (in fact, sparse paving) matroid corresponding to the Fano plane $PG(2,2)$.
Proposition~\ref{prop:ehrhart-projective-plane} and Katzman's formula~\eqref{eq:Katzman} give
\begin{align*}
\ehr_M(t)
&=\frac{1}{360}(t+1)(t+2)(116t^4+345t^3+553t^2+486t+180).
\end{align*}
\end{example}

We now specialize the volume formula of Theorem~\ref{thm:volume-paving} to Steiner systems and projective planes gives the following.
Recall the definition of $\beta_n(T)$ in~\eqref{alpha-beta} and~\eqref{Equ:Beta}.

\begin{proposition} \label{prop:Steiner-volume}
\begin{enumerate}
\item The normalized volume of the base polytope of the matroid of a Steiner system $S(r-1,k,n)$ is
\[
A(n-1,r-1) - \frac{\binom{n}{r-1}}{\binom{k}{r-1}} \sum_{\substack{T \subseteq [n-k,n-2] \\ |T| = r-1}} \beta_n(T).
\]
\item In particular, the normalized volume of the base polytope of the matroid of a projective plane of order $q$ is
\[
A(q^2+q,2) - (q^2+q+1) \sum_{\substack{T \subseteq [q^2,q^2+q-1] \\ |T| = 2}} \beta_{q^2+q+1}(T).
\]
\end{enumerate}
\end{proposition}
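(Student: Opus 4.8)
The plan is to obtain both parts as direct specializations of the paving-matroid volume formula of Theorem~\ref{thm:volume-paving}; essentially all of the content lives in that theorem, and what remains is to read off the right parameters.

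For part~(1), let $M$ be the matroid of the Steiner system $S(r-1,k,n)$. As recalled above (cf.~\cite[Prop.~2.1.24]{Oxley}), $M$ is a paving matroid of rank $r$ on $n$ elements whose hyperplanes are exactly the blocks of $S$, all of cardinality $k$, with $|\HH| = \binom{n}{r-1}/\binom{k}{r-1}$. The one hypothesis of Theorem~\ref{thm:volume-paving} that is not purely formal is connectedness of $M$, so I would dispose of that first: if $M$ were disconnected, then being a paving matroid of rank $r\ge 2$ it has a stressed hyperplane (Proposition~\ref{prop:stressed_hyps_paving}), necessarily of cardinality $k$, whence $M\cong U_{r-1,k}\oplus U_{1,n-k}$ by Proposition~\ref{prop:disconnected_stressed}; but a quick inspection of the flats of the right-hand side produces a hyperplane of cardinality $(r-2)+(n-k)$, contradicting the fact that all hyperplanes of $M$ have cardinality $k$ unless $n=2k-r+2$, a degenerate regime excluded under the standing assumptions (and, for a projective plane of order $q\ge 2$, impossible, as it would force $q^2+1=q+1$). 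Granting connectedness, Theorem~\ref{thm:volume-paving} applies; since $\HH_s=\0$ for $s\neq k$ while $|\HH_k|=\binom{n}{r-1}/\binom{k}{r-1}$, the outer sum collapses to the single term $s=k$, and combining with $\Vol(\Delta_{r,n})=A(n-1,r-1)$ from~\eqref{eq:volume-hypersimplex} gives exactly the formula in~(1).

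For part~(2), I specialize~(1) to the projective-plane parameters $r=3$, $k=q+1$, $n=q^2+q+1$: the leading term becomes $A(q^2+q,2)$, the index set $[n-k,n-2]$ becomes $[q^2,\,q^2+q-1]$, the subsets $T$ have size $r-1=2$, and the coefficient simplifies as
\[
\frac{\binom{q^2+q+1}{2}}{\binom{q+1}{2}} = \frac{(q^2+q+1)(q^2+q)}{(q+1)q} = q^2+q+1,
\]
in agreement with the known count of $q^2+q+1$ lines. This is the asserted formula. I do not anticipate any genuine obstacle: the only point needing care is the connectedness verification required to invoke Theorem~\ref{thm:volume-paving}, and everything else is substitution and elementary simplification.
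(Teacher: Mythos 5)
Your proposal follows the same route the paper intends: Proposition~\ref{prop:Steiner-volume} is obtained by specializing Theorem~\ref{thm:volume-paving}, using that the matroid of $S(r-1,k,n)$ is paving of rank $r$ with all hyperplanes of size $k$ and $|\HH_k|=\binom{n}{r-1}/\binom{k}{r-1}$, together with $\Vol(\Delta_{r,n})=A(n-1,r-1)$; your substitutions and the simplification $\binom{q^2+q+1}{2}/\binom{q+1}{2}=q^2+q+1$ for part~(2) are exactly right. Your extra attention to connectedness is a genuine improvement over the paper, since Theorem~\ref{thm:volume-paving} is stated only for connected matroids.

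The one soft spot is the phrase ``a degenerate regime excluded under the standing assumptions.'' No standing assumption in the paper excludes $n=2k-r+2$ when $t=r-1=1$: the Steiner system $S(1,k,2k)$ (a partition of $2k$ points into two $k$-blocks) exists for every $k$, its matroid is exactly $U_{1,k}\oplus U_{1,k}$, and there the displayed formula returns the $(n-1)$-dimensional volume $0$ rather than the normalized volume $\binom{2k-2}{k-1}$ of the product of simplices; so the connectedness hypothesis is genuinely needed and is not a vacuous check. For $t\geq 2$ (which covers projective planes and all cases of real interest) your conclusion is correct, but the clean way to finish is not parameter counting: if $M\cong U_{r-1,k}\oplus U_{1,n-k}$ with $n-k\geq 2$, then a $t$-set consisting of $t-2$ points of the $k$-block $E_1$ and two points of $E_2$ is contained in every hyperplane $A\cup E_2$ with $A\supseteq S\cap E_1$, $|A|=r-2$, of which there are $k-t+2\geq 2$, contradicting the defining uniqueness property of blocks; and $n-k\leq 1$ forces $k=t$, $n=r$, which is trivial. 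Your concrete check $q^2+1\neq q+1$ makes part~(2) airtight as written.
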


To evaluate this sum in practice, it is helpful to note that $A(n,2)=3^n - (n+1)2^n + n(n+1)/2$ (sequence A000460 in \cite{OEIS}).

\begin{example} \label{volume-Fan}
Let $M$ be the paving matroid corresponding to the Fano plane $PG(2,2)$.  Here $q=2$ and $[q^2,q^2+q-1]=\{4,5\}$ itself has size~2, so Proposition~\ref{prop:Steiner-volume}~(2) becomes
\begin{align*}
\Vol(\mathcal{P}_M) &= A(6,2) - 7\beta_7(\{4,5\})\\
&= A(6,2)-7\Big|\big\{w\in\Sym_6:\ \Des(w)=\{4,5\}\big\}\Big|\\
&= 302 - 7\cdot10 = 232.
\end{align*}
Indeed, this result is consistent with the Ehrhart polynomial calculated in Example~\ref{exa:Ehrhart-Fano}, whose leading term is $116/360t^6 = 232t^6/6!$.
\end{example}


\section*{Acknowledgements}

The authors thank Mohsen Aliabadi, Margaret Bayer, Matthias Beck, Federico Castillo, Luis Ferroni, Joseph Kung, and James Oxley for fruitful correspondence.
This work was initiated at the 2021 Graduate Research Workshop in Combinatorics.


\appendix

\section{An identity of generating functions}

The following technical lemma is necessary for Corollary~\ref{formula for positivity}.

\begin{lemma}\label{lem:genfunc}
Let $r\leq s < n$ be positive integers. Let $u$ be a nonnegative integer and $t$ an indeterminate.

Then
\begin{align*}
&\sum_{\ell=0}^{s-1}\binom{t(s-r-i)+s-1-i}{s-1-\ell}\binom{u}{\ell}\frac{1}{n-s+\ell}\\
=&\frac{1}{(n-1)!}\sum_{\ell=0}^{s-1}(n-2-\ell)!\ell!\binom{t(s-r-i)+u+s-1-\ell-i}{s-1-\ell}\binom{t(s-r-i)+s-1-i}{\ell}.
\end{align*}
\end{lemma}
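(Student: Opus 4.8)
The plan is to collapse both sides of the asserted identity to one and the same closed form. It is convenient to set $p=s-1$, $q=n-1$, and $c=t(s-r-i)+s-1-i$; the hypotheses $r\le s<n$ then force $q-p\ge 1$ and $0\le\ell\le p\le q-1$ throughout, so every denominator $q-p+\ell$ and every factorial $(q-1-\ell)!$ appearing below is meaningful. Since both sides are polynomials in $t$ (equivalently in $c$), it suffices to prove the identity for every real value of $c$, where it reads
\[
\sum_{\ell=0}^{p}\binom{c}{p-\ell}\binom{u}{\ell}\frac{1}{q-p+\ell}
=\frac{1}{q!}\sum_{\ell=0}^{p}(q-1-\ell)!\,\ell!\,\binom{c+u-\ell}{p-\ell}\binom{c}{\ell}.
\]
I will show that each side equals $\displaystyle\sum_{b=0}^{p}\binom{c}{b}\binom{u}{p-b}\,\frac{1}{q-b}$.

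The left-hand side takes this form immediately after the reindexing $\ell\mapsto p-b$, which sends $q-p+\ell$ to $q-b$. For the right-hand side the key ingredient is the auxiliary identity
\[
\sum_{\ell=0}^{p}\binom{c}{\ell}\binom{c+u-\ell}{p-\ell}\,w^{\ell}
=\sum_{b=0}^{p}\binom{c}{b}\binom{u}{p-b}\,(1+w)^{b},
\]
valid for a nonnegative integer $u$, a real $c$, and an indeterminate $w$: both sides are the coefficient of $z^{p}$ in $\bigl(1+(1+w)z\bigr)^{c}(1+z)^{u}$, obtained on the right by expanding $\bigl(1+(1+w)z\bigr)^{c}=\sum_{b}\binom{c}{b}(1+w)^{b}z^{b}$ directly, and on the left by writing $1+(1+w)z=(1+z)\bigl(1+\tfrac{wz}{1+z}\bigr)$, so that $\bigl(1+(1+w)z\bigr)^{c}(1+z)^{u}=\sum_{\ell}\binom{c}{\ell}w^{\ell}z^{\ell}(1+z)^{c+u-\ell}$. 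Granting this, I use the Euler Beta integral $\dfrac{(q-1-\ell)!\,\ell!}{q!}=\displaystyle\int_{0}^{1}x^{\ell}(1-x)^{q-1-\ell}\,dx$ to rewrite the right-hand side above as $\displaystyle\int_{0}^{1}(1-x)^{q-1}\sum_{\ell=0}^{p}\binom{c}{\ell}\binom{c+u-\ell}{p-\ell}\Bigl(\tfrac{x}{1-x}\Bigr)^{\ell}\,dx$, apply the auxiliary identity with $1+w=\tfrac{1}{1-x}$ to turn the integrand into $\sum_{b=0}^{p}\binom{c}{b}\binom{u}{p-b}(1-x)^{q-1-b}$ (a genuine polynomial in $x$, since $b\le p\le q-1$), and integrate term by term using $\int_{0}^{1}(1-x)^{q-1-b}\,dx=\tfrac{1}{q-b}$. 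This produces exactly $\sum_{b}\binom{c}{b}\binom{u}{p-b}\tfrac{1}{q-b}$, matching the left-hand side, and substituting back $c=t(s-r-i)+s-1-i$, $p=s-1$, $q=n-1$ recovers the statement of the lemma.

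The genuinely delicate parts are the two-way coefficient extraction underlying the auxiliary identity — computing $[z^{p}]\bigl(1+(1+w)z\bigr)^{c}(1+z)^{u}$ both by direct expansion and by the regrouping $1+(1+w)z=(1+z)\bigl(1+\tfrac{wz}{1+z}\bigr)$ — and the bookkeeping that identifies the three a-priori-different quantities $q-p+\ell$ (in the original left sum), $q-b$ (after reindexing), and $q-b$ (from the Beta integral on the right); I expect the coefficient identity to be the technical heart of the argument, though it is short. Everything else is routine manipulation of binomial coefficients and a single elementary integral, so no convergence or analytic subtlety arises: the integrands are polynomials and the swaps of finite sums with integrals are legitimate.
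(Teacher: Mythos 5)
Your proof is correct, but it takes a genuinely different route from the paper's. The paper realizes the left-hand side as the coefficient of $x^{n-1}$ in $(1+x)^{t(s-r-i)+s-1-i}\int_0^x y^{n-s-1}(1+y)^u\,\mathrm{d}y$ and then evaluates the $(n-1)$st derivative of that function at $x=0$ via an inductively proven Leibniz-type formula (the appendix Claim), which is where the factorials $(n-2-\ell)!\,\ell!$ on the right-hand side come from. You instead reduce everything to a free-parameter identity in $c=t(s-r-i)+s-1-i$, reindex the left side, represent $\frac{(n-2-\ell)!\,\ell!}{(n-1)!}$ by the Euler Beta integral, and collapse both sides to the common form $\sum_b\binom{c}{b}\binom{u}{p-b}\frac{1}{q-b}$ by means of a single Chu--Vandermonde-type convolution identity, proved by extracting $[z^{s-1}]$ from $\bigl(1+(1+w)z\bigr)^c(1+z)^u$ in two ways. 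Your version is shorter, avoids the inductive derivative bookkeeping, and makes transparent that the lemma is a polynomial identity in the single parameter $c$ (hence uniform in $i$ and in the linear form of $t$); the paper's version is a self-contained calculus computation whose integral is tailored to how the left-hand side arises in Corollary~\ref{formula for positivity}. One small point to make explicit: the regrouping $\bigl(1+(1+w)z\bigr)^c=(1+z)^c\bigl(1+\tfrac{wz}{1+z}\bigr)^c$ for non-integer $c$ needs a word of justification (multiplicativity of formal binomial series, or observe that both sides of your auxiliary identity are polynomials in $c$ and verify it for all positive integers $c$); with that noted, the argument is complete.
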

\begin{proof}
First, it is elementary that
\[\int_0^xy^{n-s-1}(1+y)^u\mathrm{d}y=\sum_{\ell=0}^u\binom{u}{\ell}\frac{1}{n-s+\ell}x^{n-s+\ell}\]
so the coefficient of $x^{n-1}$ in the power series expansion of $(1+x)^{t(s-r-i)+s-1-i}\int_0^xy^{n-s-1}(1+y)^u\mathrm{d}y$ is
\[\sum_{\ell=0}^{s-1}\binom{t(s-r-i)+s-1-i}{s-1-\ell}\binom{u}{\ell}\frac{1}{n-s+\ell}.\]

For a function $f$, let $D^kf$ denote the $k$th derivative of $f$ with respect to $x$. Because for a power series $F(x)$, the coefficient of $x^{n-1}$ is given by $\frac{1}{(n-1)!}D^{n-1}F(0)$, the statement of the lemma is equivalent to the following:
\begin{align*}
&D^{n-1}\Big((1+x)^{t(s-r-i)+s-1-i}\int_0^xy^{n-s-1}(1+y)^u\mathrm{d}y\Big)\Big\rvert_{x=0}\\
=&\sum_{\ell=0}^{s-1}(n-2-\ell)!\ell!\binom{t(s-r-i)+u+s-1-\ell-i}{s-1-\ell}\binom{t(s-r-i)+s-1-i}{\ell}.
\end{align*}
\begin{claim}\label{derivative claim}
For $k\geq 0$, the following holds
\begin{align*}
    &D^{k}\Big((1+x)^{t(s-r-i)+s-1-i}\int_0^xy^{n-s-1}(1+y)^u\mathrm{d}y\Big)\\
    =&\sum_{\ell=0}^{k-1}D^{k-1-\ell}\Big(x^{n-s-1}(1+x)^{t(s-r-i)+u+s-1-i-\ell}\prod_{j=0}^{\ell-1}(t(s-r-i)+s-1-i-j)\Big)\\
    &+(1+x)^{t(s-r-i)+s-1-i-k}\prod_{j=0}^{k-1}(t(s-r-i)+s-1-i-j)\int_0^xy^{n-s-1}(1+y)^u\mathrm{d}y.
\end{align*}
\end{claim}
\begin{proof}[Proof of Claim \ref{derivative claim}.]
We proceed by induction. The claim is true for $k=0$. Let $k\geq 1$, and assume the claim holds for $k-1$. Then
\begin{align*}
    &D^{k}\Big((1+x)^{t(s-r-i)+s-1-i}\int_0^xy^{n-s-1}(1+y)^u\mathrm{d}y\Big)\\
    =&\sum_{\ell=0}^{k-2}D^{k-1-\ell}\Big(x^{n-s-1}(1+x)^{t(s-r-i)+u+s-1-i-\ell}\prod_{j=0}^{\ell-1}(t(s-r-i)+s-1-i-j)\Big)\\
    &\quad+D\Big((1+x)^{t(s-r-i)+s-1-i-(k-1)}\prod_{j=0}^{k-2}(t(s-r-i)+s-1-i-j)\int_0^xy^{n-s-1}(1+y)^u\mathrm{d}y\Big)\\
    =&\sum_{\ell=0}^{k-2}D^{k-1-\ell}\Big(x^{n-s-1}(1+x)^{t(s-r-i)+u+s-1-i-\ell}\prod_{j=0}^{\ell-1}(t(s-r-i)+s-1-i-j)\Big)\\
    &\quad+(1+x)^{t(s-r-i)+s-1-i-(k-1)}\prod_{j=0}^{k-2}(t(s-r-i)+s-1-i-j)x^{n-s-1}(1+x)^u\mathrm{d}y\\
    &\quad+(1+x)^{t(s-r-i)+s-1-i-k}\prod_{j=0}^{k-1}(t(s-r-i)+s-1-i-j)\int_0^xy^{n-s-1}(1+y)^u\mathrm{d}y\\
    =&\sum_{\ell=0}^{k-1}D^{k-1-\ell}\Big(x^{n-s-1}(1+x)^{t(s-r-i)+u+s-1-i-\ell}\prod_{j=0}^{\ell-1}(t(s-r-i)+s-1-i-j)\Big)\\
    &\quad+(1+x)^{t(s-r-i)+s-1-i-k}\prod_{j=0}^{k-1}(t(s-r-i)+s-1-i-j)\int_0^xy^{n-s-1}(1+y)^u\mathrm{d}y.
\qedhere\end{align*}
\end{proof}

Using Claim \ref{derivative claim},
\begin{align*}
    &D^{n-1}\Big((1+x)^{t(s-r-i)+s-1-i}\int_0^xy^{n-s-1}(1+y)^u\mathrm{d}y\Big)\Big\rvert_{x=0}\\
    =&\sum_{\ell=0}^{n-2}D^{n-2-\ell}\Big(x^{n-s-1}(1+x)^{t(s-r-i)+u+s-1-i-\ell}\prod_{j=0}^{\ell-1}(t(s-r-i)+s-1-i-j)\Big)\Big\rvert_{x=0}\\
    &\quad+(1+x)^{t(s-r-i)+s-1-i-(n-1)}\prod_{j=0}^{n-2}(t(s-r-i)+s-1-i-j)\int_0^xy^{n-s-1}(1+y)^u\mathrm{d}y\Big\rvert_{x=0}\\
    =&\sum_{\ell=0}^{n-2}D^{n-2-\ell}\Big(x^{n-s-1}(1+x)^{t(s-r-i)+u+s-1-i-\ell}\prod_{j=0}^{\ell-1}(t(s-r-i)+s-1-i-j)\Big)\Big\rvert_{x=0}\\
    =&\sum_{\ell=0}^{n-2}\left[\prod_{j=0}^{\ell-1}(t(s-r-i)+s-1-i-j)\right]\\
    &\quad\sum_{p=0}^{n-2-\ell}\binom{n-2-\ell}{p}D^p\big(x^{n-s-1}\big)D^{n-2-\ell-p}\big((1+x)^{t(s-r-i)+u+s-1-i-\ell}\big)\Big\rvert_{x=0}\\
    =&\sum_{\ell=0}^{s-1}\left[\prod_{j=0}^{\ell-1}(t(s-r-i)+s-1-i-j)\right]\\
    &\quad\sum_{p=0}^{n-2-\ell}\binom{n-2-\ell}{p}D^p\big(x^{n-s-1}\big)D^{n-2-\ell-p}\big((1+x)^{t(s-r-i)+u+s-1-i-\ell}\big)\Big\rvert_{x=0}\\
    =&\sum_{\ell=0}^{s-1}\left[\prod_{j=0}^{\ell-1}(t(s-r-i)+s-1-i-j)\right]\\
    &\quad\binom{n-2-\ell}{n-s-1}(n-s-1)!\prod_{j=0}^{s-2-\ell}(t(s-r-i)+u+s-1-i-\ell-j)\\
    =&\sum_{\ell=0}^{s-1}(n-2-\ell)!\ell!\binom{t(s-r-i)+s-1-i}{\ell}\binom{t(s-r-i)+u+s-1-i-\ell}{s-1-\ell}
\end{align*}
where the fourth and fifth equalities use the fact that
$D^p\big(x^{n-s-1}\big)\Big\rvert_{x=0}=0$
unless $p=n-s-1$.
\end{proof}

\bibliographystyle{amsplain}
\bibliography{bibliography}

\end{document}